\newcommand{\Z}{\mathbb Z} 
\newcommand{\R}{\mathbb R} 
\newcommand{\C}{\mathbb C} 
\newcommand{\varep}{\varepsilon}
\newcommand{\scal}[1]{\left\langle #1 \right\rangle} 
\newtheorem{theorem}{Theorem}[section]
\newtheorem{lemma}[theorem]{Lemma}
\newtheorem{claim}[theorem]{Claim} 
\theoremstyle{definition}
\newtheorem{definition}[theorem]{Definition}
\newtheorem{remark}[theorem]{Remark}
\title[Existence and blow-up behavior of minimizers]{Existence, non-existence and blow-up behavior of minimizers for the mass-critical fractional nonlinear Schr\"odinger equations with periodic potentials} 
\author[V. D. Dinh]{Van Duong Dinh}
\address[V. D. Dinh]{Laboratoire Paul Painlev\'e UMR 8524, Universit\'e de Lille CNRS, 59655 Villeneuve d'Ascq Cedex, France
	and 
Department of Mathematics, HCMC University of Pedagogy, 280 An Duong Vuong, Ho Chi Minh, Vietnam}
\email{contact@duongdinh.com}
\keywords{Fractional nonlinear Schr\"odinger equation; Standing waves; Concentration-compactness principle; Blow-up profile; Periodic potential}
\subjclass[2010]{35A15, 35B44, 35J35, 35Q55}
\begin{document}

\begin{abstract}
We consider the minimizing problem for the energy functional with prescribed mass constraint related to the fractional nonlinear Schr\"odinger equation with periodic potentials. Using the concentration-compactness principle, we show a complete classification for the existence and non-existence of minimizers for the problem. In the mass-critical case, under a suitable assumption of the potential, we give a detailed description of blow-up behavior of minimizers once the mass tends to a critical value.
\end{abstract}

\maketitle


\section{Introduction} \label{section introduction}
\setcounter{equation}{0}

In this paper, we consider the following time-independent fractional nonlinear Schr\"odinger equation
\begin{align} \label{FNLS}
(-\Delta)^s u + Vu = |u|^\alpha u, \quad x \in \R^d,
\end{align}
where $d\geq 1$, $0<s<1$, $\alpha>0$ and $V: \R^d \rightarrow \R$ is an external potential. The operator $(-\Delta)^s$ is the fractional Laplacian operator which is defined by $(-\Delta)^s u = \mathcal{F}^{-1}[|\xi|^{2s} \mathcal{F}u]$, where $\mathcal{F}$ and $\mathcal{F}^{-1}$ are the Fourier transform and inverse Fourier transform respectively. 

The problem \eqref{FNLS} comes from the following time dependent Schr\"odinger-type equation
\begin{align} \label{time-FNLS}
i\partial_t \psi = (-\Delta)^s \psi + (V - \omega) \psi  - |\psi|^\alpha \psi, \quad (t,x) \in \R \times \R^d
\end{align}
by looking for standing wave solutions $\psi(t,x) = e^{i \omega t} u(x)$ with $\omega \in \R$ a frequency. The equation \eqref{time-FNLS} is the fractional nonlinear Schr\"odinger equation which was introduced by Laskin \cite{Laskin1, Laskin2} as a result of extending the Feynman path integral, from the Brownian-like to L\'evy-like quantum mechanical paths. The equation \eqref{time-FNLS} also appears in the continuum limit of discrete models with long-range lattice interactions (see e.g. \cite{KLS}) and in the description of Boson stars (see e.g \cite{FJL}) as well as in water wave dynamics (see e.g. \cite{IP}).

The equation \eqref{FNLS} involves the fractional Laplacian $(-\Delta)^s, 0<s<1$ which is a non-local operator. A general approach to deal with this problem due to Caffarelli-Silvestre \cite{CS} is to transform \eqref{FNLS} into a local one  via the Dirichlet-Neumann map. That is, one consider the extension $U: \R^d \times [0,\infty) \rightarrow \R$ which satisfies
\[
\left\{
\begin{array}{rcl}
-\text{div}(y^{1-2s} \nabla U) &=& 0 \quad \text{in } \R^d \times [0,\infty), \\
U(y=0) &=& u \quad \text{on } \R^d.
\end{array}
\right.
\]
It was shown in \cite{CS} that
\[
(-\Delta)^s u(x) = A(d,s) \lim_{y\rightarrow 0^+} - y^{1-2s} U_y(x,y)
\]
and 
\[
\|(-\Delta)^s u\|^2_{L^2} = \int_{\R^d} |\xi|^{2s} |\mathcal{F}u (\xi)|^2 d\xi = A(d,s) \iint_{\R^d \times [0,\infty)} |\nabla U(x,y)|^2 y^{1-2s} dx dy,
\]
where $A(d,s)$ is an appropriate constant depending on $d$ and $s$. This method has been applied successfully to study equations involving the fractional Laplacian, and a series of significant results have been obtained (see e.g. \cite{CSi1}, \cite{CSi2}, \cite{CSS}, \cite{CRS}, \cite{SV} and references therein).

In the sequel, we are interested in the existence, non-existence and blow-up behavior of minimizers for the energy functional related to \eqref{FNLS} under the prescribed mass constraint. More precisely, we consider the minimizing problem: for $a>0$,
\[
I(a):= \inf \left\{ E(u) \ : \ u \in \mathcal{H}(\R^d), \|u\|^2_{L^2} =a \right\},
\]
where 
\[
E(u):= \frac{1}{2} \int_{\R^d} |(-\Delta)^{\frac{s}{2}} u(x)|^2 dx+ \frac{1}{2} \int_{\R^d} V(x)|u(x)|^2 dx - \frac{1}{\alpha+2} \int_{\R^d} |u(x)|^{\alpha+2} dx
\]
and the exponent $\alpha$ satisfies $0<\alpha <s^*$ with
\begin{align} \label{defi-alpha-sup}
s^*:= \left\{
\begin{array}{cl}
\frac{4s}{d-2s} &\text{if } d>2s, \\
\infty &\text{if } d \leq 2s.
\end{array}
\right.
\end{align}
Here $\mathcal{H}(\R^d)$ is a subspace of $H^s(\R^d)$ which is defined by
\[
\mathcal{H}(\R^d):= \left\{ u \in H^s(\R^d) \ : \ \int_{\R^d} V|u|^2 dx <\infty \right\}.
\]
Note that the space $\mathcal{H}(\R^d)$ is a Hilbert space with scalar product and norm
\[
\scal{u,v}_{\mathcal{H}} := \int_{\R^d} u \overline{v} dx + \int_{\R^d} (-\Delta)^{\frac{s}{2}} u \overline{(-\Delta)^{\frac{s}{2}} v} dx + \int_{\R^d} V u \overline{v} dx, \quad \|u\|_{\mathcal{H}} = \sqrt{\scal{u,u}_{\mathcal{H}}}.
\]

Let us recall some known results on the existence, non-existence and blow-up behavior of minimizers for the energy functional with prescribed mass constraint. In the case $s=1$, i.e. the classical nonlinear Schr\"odinger equation, the problem has been studied by many mathematicians. 

In the case of constant potentials, using the symmetric rearrangement argument or the concentration-compactness principle of Lions \cite{Lions1, Lions2}, it holds that (see e.g. \cite[Proposition 8.3.6]{Cazenave}) for any $a>0$, there exists at least a minimizer for $I(a)$ in the mass-subcritical case $0<\alpha<\frac{4}{d}$. In the mass-critical case $\alpha=\frac{4}{d}$, one can prove that for any $a>0$ and $a \ne \|R\|^2_{L^2}$, there is no minimizer for $I(a)$; and for $a=\|R\|^2_{L^2}$, there is a unique (up to symmetries) minimizer for $I(a)$, where $R$ is the unique (up to symmetries) positive radial solution to 
\begin{align} \label{ell-NLS-mass}
-\Delta R + R - |R|^{\frac{4}{d}} R=0.
\end{align}

In the case of harmonic potential, i.e. $V = |x|^2$, using the compact embedding $\mathcal{H} \hookrightarrow L^q$ for any $2\leq q<\frac{2d}{d-2}$ if $d\geq 3$ (or $2\leq q<\infty$ if $d=1,2$), Zhang \cite{Zhang} proved that for any $a>0$, there exists at least a minimizer for $I(a)$ in the mass-subcritical case. In the mass-critical case, he proved that for $0<a<\|R\|^2_{L^2}$, there exists at least a minimizer for $I(a)$. 

In \cite{GS}, Guo-Seiringer studied the minimizing problem
\begin{align} \label{min-pro-Jb}
J(b) = \inf \left\{ E_b(u) \ : \ u \in \mathcal{H}(\R^2), \|u\|^2_{L^2}=1 \right\},
\end{align}
where $b>0$ is a given parameter, 
\[
E_b(u) = \int_{\R^2} |\nabla u(x)|^2 dx + \int_{\R^2} V(x)|u(x)|^2 dx - \frac{b}{2} \int_{\R^2} |u(x)|^4 dx
\]
and $V$ is a trapping potential, i.e.
\begin{align} \label{trapping}
0 \leq V \in L^\infty_{\text{loc}}(\R^2), \quad \lim_{|x| \rightarrow \infty} V(x) = \infty.
\end{align}
Note that by a simple scaling argument, it is easy to see that $I(a)=\frac{a}{2} J(a)$. Using the same argument as in \cite{Zhang}, they proved that $J(b)$ has at least a minimizer if $0<b<b^*:=\|R\|^2_{L^2}$, whereas there is no minimizer for $J(b)$ if $b \geq b^*$. They also proved that if $u_b$ is a minimizer for $J(b)$ with $0<b<b^*$, then $u_b$ blows up as $b \nearrow b^*$ in the sense that
\begin{align} \label{ub}
\lim_{b\nearrow b^*} \|\nabla u_b\|_{L^2}=\infty.
\end{align}
Moreover, they gave a detailed description of the blow-up behavior of minimizers for $J(b)$ by assuming that the trap potential $V$ has a finite number of isolated minima, and that in their vicinity, $V$ behaves like a power of the distance from these points. For instance, if $V(x)= \kappa |x-x_0|^p$ for some $\kappa, p>0$, then as $b\nearrow b^*$,
\[
(b^*-b)^{\frac{1}{2+p}} u_b \left( (b^*-b)^{\frac{1}{2+p}} \cdot + x_0 \right) \rightarrow \lambda_0 R_0(\lambda_0 \cdot) \text{ strongly in } L^q(\R^2)
\]
for any $2\leq q<\infty$, where
\begin{align} \label{scaling-R}
R_0 = \frac{R}{\|R\|_{L^2}}, \quad \lambda_0 = \left( \frac{\kappa p}{2} \int_{\R^2} |x|^p [R(x)]^2 dx \right)^{\frac{1}{2+p}}.
\end{align}

In the case of bounded potentials satisfying 
\begin{align} \label{bounded}
V \in C^1(\R^d), \quad 0 = \inf_{\R^d} V <\sup_{\R^d} V = \lim_{|x| \rightarrow \infty} V(x) <\infty,
\end{align}
by using the concentration-compactness principle of Lions, one can prove (see e.g. \cite{Maeda}) that for any $a>0$, there exists at least a minimizer for $I(a)$ in the mass-subcritical case. In the mass-critical case, it can be proved that for any $0<a<\|R\|^2_{L^2}$, there exists at least a minimizer for $I(a)$. Moreover, by the same argument in \cite{GS}, there is no mimizer for $I(a)$ with $a\geq \|R\|^2_{L^2}$. In \cite{Maeda}, Maeda studied the uniqueness, concentration and symmetry of minimizers for $I(a)$ as $a \rightarrow \infty$ in the mass-subcritical case. 

In \cite{WZ}, Wang-Zhao studied the existence and non-existence of minimizers for $J(b)$ (see \eqref{min-pro-Jb}) with continuous periodic potentials satisfying 
\[
0 = \min V < \inf \sigma(-\Delta+V),
\]
where
\[
\inf \sigma (-\Delta +V) := \inf \left\{ \int_{\R^2} |\nabla u(x)|^2 dx + \int_{\R^2} V(x)|u(x)|^2 dx  \ : \ u \in H^1(\R^2), \|u\|^2_{L^2} =1 \right\}.
\]
Using the concentration-compactness principle of Lions, they proved the existence of minimizers for $J(b)$ for $b_*<b<b^*$ with some $0<b_*<b^*$ (see above \eqref{ub} for the definition of $b^*$). On the other hand, there is no minimizer for $J(b)$ when $b\geq b^*$. Moreover, under the assumption 
\[
V^{-1}(0) = x_0 + \Z^2 \text{ for some } x_0 \in [0,1]^2, \quad V(x) = O(|x-x_0|^p) \text{ as } x \rightarrow x_0 \text{ for some } p>0,
\] 
they proved that if $u_b$ is a minimizer for $J(b)$ with $b_*<b<b^*$, then as $b \nearrow b^*$,
\[
(b^*-b)^{\frac{1}{2+p}} u_b \left( (b^*-b)^{\frac{1}{2+p}} \cdot + x_0 \right) \rightarrow \lambda_0 R_0(\lambda_0 \cdot) \text{ strongly in } L^q(\R^2)
\]
for any $2\leq q<\infty$, where $R_0$ and $\lambda_0$ are as in \eqref{scaling-R}. 

In \cite{Phan}, Phan studied the existence and non-existence of minimizers for $J(b)$ with attractive potentials satisfying 
\[
0\geq V \in L^q(\R^2) + L^r(\R^2), \quad \inf \sigma(-\Delta +V)<0
\]
for some $1<q<r<\infty$. He proved that for any $0<b<b^*$, there exists at least a minimizer for $J(b)$, and there is no minimizer for $J(b)$ if $b\geq b^*$. Moreover, under a suitable assumption on the external potential, he gave a detailed description of the blow-up behavior of minimizers for $J(b)$ as $b \nearrow b^*$. The result implies in particular that if $V(x) = - \kappa |x-x_0|^{-p}$ for some $\kappa>0$ and $0<p<2$, then as $b \nearrow b^*$,
\[
(b^*-b)^{\frac{1}{2-p}} u_b \left( (b^*-b)^{\frac{1}{2-p}} \cdot + x_0\right) \rightarrow \lambda_0 R_0(\lambda_0 \cdot) \text{ strongly in } H^1(\R^2),
\]
where 
\[
\lambda_0 = \left( \frac{\kappa p}{2} \int_{\R^2} |x|^{-p} [R(x)]^2 dx \right)^{\frac{1}{2-p}}.
\]

In the case of an inverse-square potential $V(x)= c |x|^{-2}$ with $c>-\left(\frac{d-2}{2}\right)^2$, using the profile decomposition, Bensouilah-Dinh-Zhu \cite{BDZ} proved the existence of minimizers for $I(a)$ for any $a>0$ in the mass-subcritical case.

The existence, non-existence and blow-up behavior of minimizers for $J(b)$ has been extended to ring-shaped potentials in \cite{GZZ}, multi-well potentials \cite{GWZZ}, ellipse-shaped potentials \cite{GZ} and rotating trap potentials \cite{GLY}.

In the case $0<s<1$, the existence, non-existence and blow-up behavior of minimizers for $I(a)$ has been considered in several works. In \cite{HL}, He-Long proved the existence and non-existence of minimizers for $I(a)$ with trapping potentials \eqref{trapping} and bounded potentials satisfying \eqref{bounded}. They also studied the blow-up behavior of minimizers for $I(a)$ as $a$ tends to a critical value in the mass-critical case $\alpha=\frac{4s}{d}$. 

Recently, Du-Tian-Wang-Zhang \cite{DTWZ} gave a complete classification of the existence and non-existence of minimizers for $I(a)$ with trapping potentials \eqref{trapping} for $0<\alpha<s^*$ (see \eqref{defi-alpha-sup} for the definition of $s^*$). Moreover, under a suitable assumption of the external potential, they showed a detailed analysis of the blow-up behavior of minimizers for $I(a)$ in the mass-critical case $\alpha=\frac{4s}{d}$. 

Motivated by the aforementioned papers, we study the existence and non-existence of minimizers for the energy functional related to \eqref{FNLS} with the prescribed mass constraint. In this paper, we focus mainly on the fractional Schr\"odinger equation with periodic potentials. Before stating our main results, we introduce the following notion of ground states related to the fractional Schr\"odinger equation.

\begin{definition} [Ground states] Let $d\geq 1$, $0<s<1$ and $0<\alpha <s^*$. A non-zero, non-negative $H^s$ function $Q_\alpha$ is called a {\bf ground state} related to 
	\begin{align} \label{ell-equ-fra}
	(-\Delta)^s u + u - |u|^\alpha u =0
	\end{align}
	if it solves \eqref{ell-equ-fra} and is a minimizer for the Weinstein's functional
	\begin{align} \label{weinstein-functional}
	J(u):= \left[\|(-\Delta)^{\frac{s}{2}} u\|^{\frac{d\alpha}{2s}}_{L^2} \|u\|^{\alpha+2-\frac{d\alpha}{2s}}_{L^2} \right] \div \|u\|^{\alpha+2}_{L^{\alpha+2}},
	\end{align}
	that is,
	\[
	J(Q_\alpha) = \inf \left\{J(u) \ : \ u \in H^s(\R^d) \backslash \{0\} \right\}.
	\]
\end{definition}

The existence, uniqueness, symmetry, regularity and decay of the ground state related to \eqref{ell-equ-fra} has been established in celabrated papers \cite{FL, FLS} (see Theorem $\ref{theo-fractional-GN}$ more details).

From now on, we denote 
\begin{align} \label{defi-a*}
a^*:= \|Q\|^2_{L^2},
\end{align}
where $Q:= Q_{\frac{4s}{d}}$ is the unique (up to translations) positive radial ground state related to 
\begin{align} \label{ell-equ-fra-cri}
(-\Delta)^s u + u - |u|^{\frac{4s}{d}} u =0.
\end{align}

Our first result is the following existence and non-existence of minimizers for $I(a)$ in the case of no external potential $V \equiv 0$.
\begin{theorem} [No potential] \label{theo-no-potential}
	Let $d\geq 1$ and $0<s<1$. Let $V \equiv 0$. Then it holds that:
	\begin{itemize}
		\item \cite{Feng} If $0<\alpha<\frac{4s}{d}$, then for any $a>0$, there exits at least a minimizer for $I(a)$ and $-\infty<I(a)<0$.
		\item If $\alpha=\frac{4s}{d}$, then for any $a>0$ and $a \ne a^*$,	there is no minimizer for $I(a)$; and for $a=a^*$, there is a unique (up to symmetries) minimizer for $I(a)$. Moreover, $I(a) \geq 0$ if $0<a<a^*$, $I(a^*)=0$ and $I(a) = -\infty$ if $a>a^*$.
		\item If $\frac{4s}{d}<\alpha<s^*$, then for any $a>0$, there is no minimizer for $I(a)$ and $I(a) = -\infty$. 
	\end{itemize}
\end{theorem}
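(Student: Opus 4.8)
The plan is to handle the three regimes separately, concentrating on the mass-critical and mass-supercritical cases since the mass-subcritical item is exactly Feng's result \cite{Feng}. With $V\equiv0$ one has $E(u)=\tfrac12\|(-\Delta)^{s/2}u\|_{L^2}^2-\tfrac1{\alpha+2}\|u\|_{L^{\alpha+2}}^{\alpha+2}$, and the recurring tool will be the mass-preserving dilation $u_\lambda(x):=\lambda^{d/2}u(\lambda x)$, for which $\|u_\lambda\|_{L^2}=\|u\|_{L^2}$, $\|(-\Delta)^{s/2}u_\lambda\|_{L^2}^2=\lambda^{2s}\|(-\Delta)^{s/2}u\|_{L^2}^2$ and $\|u_\lambda\|_{L^{\alpha+2}}^{\alpha+2}=\lambda^{d\alpha/2}\|u\|_{L^{\alpha+2}}^{\alpha+2}$.

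First I would dispose of the mass-supercritical case $\tfrac{4s}{d}<\alpha<s^*$. Fix $u\in\mathcal H(\R^d)$ with $\|u\|_{L^2}^2=a$; then $E(u_\lambda)=\tfrac{\lambda^{2s}}2\|(-\Delta)^{s/2}u\|_{L^2}^2-\tfrac{\lambda^{d\alpha/2}}{\alpha+2}\|u\|_{L^{\alpha+2}}^{\alpha+2}$, and since $\tfrac{d\alpha}2>2s$ the negative term dominates as $\lambda\to\infty$, so $E(u_\lambda)\to-\infty$. Hence $I(a)=-\infty$ and no minimizer exists.

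The heart of the matter is the mass-critical case $\alpha=\tfrac{4s}{d}$, where $\tfrac{d\alpha}2=2s$ and the exponent $\tfrac{d\alpha}{2s}$ in \eqref{weinstein-functional} equals $2$. The two inputs are: (a) the sharp fractional Gagliardo--Nirenberg inequality provided by Theorem~\ref{theo-fractional-GN}, namely $\|u\|_{L^{\alpha+2}}^{\alpha+2}\le J(Q)^{-1}\|(-\Delta)^{s/2}u\|_{L^2}^2\|u\|_{L^2}^{4s/d}$, with equality precisely when $u$ equals $Q$ up to phase, translation and $L^2$-scaling; and (b) the Pohozaev and Nehari identities for $Q$ solving \eqref{ell-equ-fra-cri}, which give $\|(-\Delta)^{s/2}Q\|_{L^2}^2=\tfrac d{2s}\|Q\|_{L^2}^2$ and $\|Q\|_{L^{\alpha+2}}^{\alpha+2}=\tfrac{d+2s}{2s}\|Q\|_{L^2}^2$, hence $J(Q)=\tfrac d{d+2s}(a^*)^{2s/d}$ and $E(Q)=0$. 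Feeding (b) into (a), every $u$ with $\|u\|_{L^2}^2=a$ obeys the basic estimate $E(u)\ge\tfrac12\big(1-(a/a^*)^{2s/d}\big)\|(-\Delta)^{s/2}u\|_{L^2}^2$. Three consequences follow. If $0<a<a^*$, the estimate gives $I(a)\ge0$, while $E(u_\lambda)=\lambda^{2s}E(u)\to0$ as $\lambda\to0^+$ gives $I(a)\le0$, so $I(a)=0$; a minimizer would then force $\|(-\Delta)^{s/2}u\|_{L^2}=0$, i.e. $u\equiv0$, contradicting $\|u\|_{L^2}^2=a$, so none exists. If $a=a^*$, the estimate gives $I(a^*)\ge0$ and $E(Q)=0$ with $\|Q\|_{L^2}^2=a^*$ gives $I(a^*)=0$ together with a minimizer. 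If $a>a^*$, the function $w:=\sqrt{a/a^*}\,Q$ has mass $a$ and, using $E(Q)=0$, satisfies $E(w)=\tfrac12\|(-\Delta)^{s/2}Q\|_{L^2}^2\big((a/a^*)-(a/a^*)^{(\alpha+2)/2}\big)<0$, whence $E(w_\lambda)=\lambda^{2s}E(w)\to-\infty$ and $I(a)=-\infty$.

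It then remains to establish uniqueness of the minimizer at $a=a^*$: if $E(u)=I(a^*)=0$, the basic estimate is saturated, so (as $u\not\equiv0$) equality holds in the sharp Gagliardo--Nirenberg inequality, and by the classification of its optimizers in \cite{FL, FLS} one gets $u=e^{i\gamma}\rho^{d/2}Q(\rho(\cdot-y))$ for some $\gamma\in\R$, $\rho>0$, $y\in\R^d$; conversely each such function is a minimizer because, where $E$ vanishes, it is invariant under phase, translation and the $L^2$-scaling $u\mapsto u_\lambda$, which also preserve the mass. I expect the main obstacle to be this rigidity step --- carefully invoking the full optimizer class of the fractional Gagliardo--Nirenberg inequality (in particular passing to $|u|$) and checking that the constraint $\|u\|_{L^2}^2=a^*$ ties the amplitude to the scaling parameter --- while the Pohozaev identities for $Q$ and the scaling estimates are routine.
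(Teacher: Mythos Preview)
Your proposal is correct and, for the mass-critical regime, is in fact more streamlined than the paper's own argument. The two proofs diverge in two places. First, for $a>a^*$ you produce the explicit test function $w=\sqrt{a/a^*}\,Q$ and use $E(Q)=0$ together with the scaling $E(w_\lambda)=\lambda^{2s}E(w)$ to send the energy to $-\infty$; the paper instead builds a truncated profile $u_\tau(x)=A_\tau\tau^{d/2}\varphi(x)Q_0(\tau x)$ and invokes a separate decay lemma for $\|(-\Delta)^{s/2}(\varphi(\tau^{-1}\cdot)Q)\|_{L^2}^2$ to control the error terms. Your route is shorter and avoids that lemma entirely; the paper's construction is more elaborate because the same truncated profile is reused later in the periodic-potential case, where the cutoff is genuinely needed to localize the potential energy. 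Second, for $0<a<a^*$ you first pin down $I(a)=0$ (from the basic estimate and the scaling $E(u_\lambda)=\lambda^{2s}E(u)\to0$) and then argue that a minimizer would have vanishing $\|(-\Delta)^{s/2}u\|_{L^2}$; the paper runs a direct contradiction, assuming a minimizer exists and deriving $I(a)>0$ strictly, then contradicting $I(a)\le 0$. Both are equivalent here. Finally, note that the paper also supplies an alternative proof of the mass-subcritical item via symmetric rearrangement and the compact embedding $H^s_{\text{rd}}\hookrightarrow L^q$, whereas you simply cite Feng; since the theorem statement itself attributes that item to \cite{Feng}, your citation is acceptable, but be aware that the paper's point in this section is partly to present that rearrangement argument.
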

Since adding a constant in the potential does not change the minimizing problem, the above result still holds in the case of constant potentials. In the mass-subcritical case $0<\alpha<\frac{4s}{d}$, the existence of minimizers for $I(a)$ has been studied in \cite{Feng} by using a fractional version of the concentration-compactness principle of P. L. Lions \cite{Lions1, Lions2}. We will give an alternative proof using the symmetric rearrangement argument. We refer the reader to Section $\ref{section-existence}$ for more details.

Our next result is the following existence and non-existence of minimizers for $I(a)$ in the case of periodic potentials.

\begin{theorem}[Periodic potential] \label{theo-existence}
	Let $d\geq 1$ and $0<s<1$. Let $V \in C(\R^d)$ be such that
	\begin{itemize}
		\item[(V1)] $V(x+z) = V(x)$ for all $x\in \R^d$ and $z \in \Z^d$.
	\end{itemize}
	Then it holds that:
	\begin{itemize}
		\item If $0<\alpha<\frac{4s}{d}$, then there exists $a_*>0$ such that for any $a>a_*$, there exists at least a minimizer for $I(a)$ and $-\infty<I(a)<\frac{a}{2} \inf \sigma((-\Delta)^s+V)$.
		\item If $\alpha=\frac{4s}{d}$ and assume in addition that
		\begin{itemize}
			\item[(V2)] $\min V < \inf \sigma((-\Delta)^s+V)$, where $\inf \sigma((-\Delta)^s+V)$ is the infimum of the spectrum of $(-\Delta)^s+V$, i.e.
			\[
			\inf \sigma((-\Delta)^s+V) = \inf \left\{ \int_{\R^d} |(-\Delta)^{\frac{s}{2}} u(x)|^2 dx + \int_{\R^d} V(x)|u(x)|^2 dx \ : \ \|u\|_{L^2} =1 \right\},
			\]
		\end{itemize}
		 then there exists $0<a_*<a^*$ such that for any $a_*<a<a^*$, there exists at least a minimizer for $I(a)$; and for $a\geq a^*$, there is no minimizer for $I(a)$. Moreover, $\frac{a}{2} \min V < I(a)<\frac{a}{2} \inf \sigma((-\Delta)^s +V)$ if $a_*<a<a^*$, $I(a^*)=\frac{a^*}{2} \min V$ and $I(a)=-\infty$ if $a>a^*$.
		\item If $\frac{4s}{d}<\alpha<s^*$, then for any $a>0$, there is no minimizer for $I(a)$ and $I(a)=-\infty$.
	\end{itemize}
\end{theorem}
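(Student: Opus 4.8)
The plan is to split into the three regimes; concentration–compactness does the work in the subcritical and critical cases, while the supercritical case is a pure scaling computation.

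\emph{Supercritical case $\tfrac{4s}{d}<\alpha<s^*$, and structural preliminaries.} Pick any nonzero $u\in H^s(\R^d)$ with $\|u\|_{L^2}^2=a$ and set $u_\lambda(x)=\lambda^{d/2}u(\lambda x)$; then $E(u_\lambda)=\tfrac{\lambda^{2s}}{2}\|(-\Delta)^{s/2}u\|_{L^2}^2+\tfrac12\int_{\R^d}V(y/\lambda)|u(y)|^2\,dy-\tfrac{\lambda^{d\alpha/2}}{\alpha+2}\|u\|_{L^{\alpha+2}}^{\alpha+2}$, and since $V$ is bounded (continuous and periodic) the middle term is $O(1)$, while $\tfrac{d\alpha}{2}>2s$, so $E(u_\lambda)\to-\infty$ as $\lambda\to\infty$; hence $I(a)=-\infty$ with no minimizer, for every $a>0$. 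For the other two cases, note that $V$ bounded gives $\mathcal H(\R^d)=H^s(\R^d)$ with equivalent norms; write $\mathcal Q(v)=\|(-\Delta)^{s/2}v\|_{L^2}^2+\int_{\R^d}V|v|^2$ and $\mu_0=\inf\sigma((-\Delta)^s+V)$, so $\mathcal Q(v)\ge\mu_0\|v\|_{L^2}^2$. By the sharp fractional Gagliardo–Nirenberg inequality of Theorem~\ref{theo-fractional-GN} together with $V\ge\min V$, $E$ restricted to $\|u\|_{L^2}^2=a$ is $H^s$-coercive in the subcritical case, while in the critical case $\tfrac1{\alpha+2}\|u\|_{L^{\alpha+2}}^{\alpha+2}\le\tfrac12(a/a^*)^{2s/d}\|(-\Delta)^{s/2}u\|_{L^2}^2$, so for $a\le a^*$ one has $I(a)\ge\tfrac a2\min V>-\infty$; in both cases minimizing sequences are $H^s$-bounded. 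Finally, writing $u=\sqrt a\,v$ with $\|v\|_{L^2}=1$ gives $I(a)=\tfrac a2\,K\!\big(\tfrac{2a^{\alpha/2}}{\alpha+2}\big)$ with $K(\beta)=\inf_{\|v\|_{L^2}=1}\big[\mathcal Q(v)-\beta\|v\|_{L^{\alpha+2}}^{\alpha+2}\big]$; since $K$ is nonincreasing, $a\mapsto I(a)/a$ is nonincreasing, and comparing near-optimizers of $K(\beta_1)$ and $K(\beta_2)$ (using $\mathcal Q\ge\mu_0$) shows $K$ is \emph{strictly} decreasing on $\{\beta:K(\beta)<\mu_0\}$.

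\emph{The threshold $a_*$ and the inequality $I(a)<\tfrac a2\mu_0$.} Fix $x_0$ with $V(x_0)=\min V$ (possible by continuity and periodicity; recall $\min V<\mu_0$ — this is (V2) in the critical case, and is automatic for nonconstant periodic $V$). In the subcritical case let $\phi_a$ be the minimizer of the potential-free problem $I_0(a)$, which by Theorem~\ref{theo-no-potential} exists and, by the $L^2$-preserving dilation, satisfies $I_0(a)=-c_0a^{\gamma}$ for some $c_0>0$, $\gamma>1$; since $\phi_a$ concentrates as $a\to\infty$, $E(\phi_a(\cdot-x_0))=I_0(a)+\tfrac12\int_{\R^d}V|\phi_a(\cdot-x_0)|^2=-c_0a^{\gamma}+\tfrac a2\min V+o(a)$, which drops below $\tfrac a2\mu_0$ once $a$ exceeds some $a_*>0$. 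In the critical case take $u_\lambda(x)=\lambda^{d/2}Q(\lambda(x-x_0))$ and the trial function $\sqrt{a/a^*}\,u_\lambda$ (of mass $a$); the Pohozaev identities for $Q$ give $E(\sqrt{a/a^*}\,u_\lambda)=\tfrac{d}{4s}\,a\big[1-(a/a^*)^{2s/d}\big]\lambda^{2s}+\tfrac a{2a^*}\int_{\R^d}V(x_0+y/\lambda)|Q(y)|^2\,dy$. For $a<a^*$ the bracket is positive but vanishes as $a\nearrow a^*$, so letting $\lambda=\lambda_a\to\infty$ slowly yields $E(\sqrt{a/a^*}\,u_{\lambda_a})\to\tfrac a2\min V<\tfrac a2\mu_0$; this fixes $0<a_*<a^*$ with $I(a)<\tfrac a2\mu_0$ for $a_*<a<a^*$. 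The companion bound $I(a)>\tfrac a2\min V$ there holds because a minimizing sequence with $\|(-\Delta)^{s/2}u_n\|_{L^2}\to0$ would, after a low/high frequency splitting and using the zero-mean part of $V$ and the $\Z^d$-periodicity, satisfy $\int V|u_n|^2\to\overline V a$ (with $\overline V$ the average of $V$ over a period) and hence $E(u_n)\to\tfrac a2\overline V>\tfrac a2\min V$, a contradiction.

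\emph{Concentration–compactness, existence, and nonexistence for $a\ge a^*$.} For a minimizing sequence $(u_n)$, $H^s$-bounded, apply Lions' principle to $\rho_n(t)=\sup_{y}\int_{B(y,t)}|u_n|^2$. Vanishing is impossible: by the fractional vanishing lemma it forces $\|u_n\|_{L^{\alpha+2}}\to0$, whence $\liminf E(u_n)\ge\tfrac12\liminf\mathcal Q(u_n)\ge\tfrac a2\mu_0>I(a)$. Dichotomy is impossible: a splitting gives $I(a)\ge I(a_1)+I(a_2)$ with $a_1+a_2=a$, $a_i\in(0,a)$ — for the escaping lump one translates it back by a lattice vector, which leaves $E$ and $\|\cdot\|_{L^2}$ invariant by (V1), and if it further vanishes one uses $\mathcal Q\ge\mu_0\|\cdot\|_{L^2}^2$ — whereas strict monotonicity of $K$ on $\{K<\mu_0\}$, in force since $I(a)/a<\tfrac12\mu_0$ on the relevant range, yields $I(a_1)+I(a_2)>\tfrac{a_1+a_2}{a}I(a)=I(a)$. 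Thus $(u_n)$ is compact modulo $\Z^d$-translations: after a suitable lattice translation it is tight, converges weakly in $H^s$ and strongly in $L^2\cap L^{\alpha+2}$ to some $u$ with $\|u\|_{L^2}^2=a$, and weak lower semicontinuity of $\mathcal Q$ gives $E(u)\le\liminf E(u_n)=I(a)$, so $u$ is a minimizer. For $a\ge a^*$ in the critical case there is none: for $a>a^*$ the bracket above is negative, so $E(\sqrt{a/a^*}\,u_\lambda)\to-\infty$ and $I(a)=-\infty$; for $a=a^*$ that family gives $E\to\tfrac{a^*}2\min V$, so with the lower bound we get $I(a^*)=\tfrac{a^*}2\min V$, and a minimizer would force equality in the critical Gagliardo–Nirenberg inequality, hence by the uniqueness in Theorem~\ref{theo-fractional-GN} be — up to translations, dilations and a phase — a multiple of $Q$, in particular nowhere zero, while $\int_{\R^d}(V-\min V)|u|^2=0$ would force $V\equiv\min V$, contradicting (V2).

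I expect the main obstacle to be twofold. First, making the trial-function estimates rigorous in the nonlocal setting: controlling $\int_{\R^d}V(x_0+y/\lambda)|Q(y)|^2\,dy$ and the $\dot H^s$ energy of dilated (and, in the alternative Bloch-type construction, cut-off) functions requires fractional commutator estimates that have no purely local analogue. Second, carrying out the concentration–compactness dichotomy for the nonlocal quadratic form $\|(-\Delta)^{s/2}\cdot\|_{L^2}^2$, which is cleanest through the Caffarelli–Silvestre extension so that the energy localizes and the classical splitting estimates apply; the interplay between the $\Z^d$-periodicity (needed to "recenter" the escaping mass) and this nonlocal splitting is the delicate point.
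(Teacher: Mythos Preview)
Your overall strategy---concentration--compactness plus trial functions, with the key strict inequality $I(a)<\tfrac a2\mu_0$ driving the exclusion of vanishing and dichotomy---is the same as the paper's. A few points of comparison and one caution.

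\textbf{Dichotomy.} You rule it out via strict subadditivity, encoded as strict monotonicity of $K(\beta)$ on $\{K<\mu_0\}$. The paper instead inserts the scaling identity
\[
E(u)=\lambda^{-\alpha-2}E(\lambda u)+\bigl(1-\lambda^{-\alpha}\bigr)\tfrac12\mathcal Q(u)
\]
directly into the two pieces $u^1_k,u^2_k$, bounds $\tfrac12\mathcal Q(u^i_k)\ge\tfrac12\mu_0\|u^i_k\|_{L^2}^2$, and obtains a convex combination of $I(a)$ and $\tfrac a2\mu_0$ that again contradicts $I(a)<\tfrac a2\mu_0$. Both routes work; yours is more structural, the paper's is a two-line computation once the identity is written down.

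\textbf{Trial functions.} You use the uncut profile $\sqrt{a/a^*}\,\lambda^{d/2}Q(\lambda(\cdot-x_0))$; the paper uses a cut-off version and invokes a separate lemma (\cite{DTWZ}) for the $\dot H^s$ expansion. For the purpose of this theorem your simpler choice suffices, since $V$ is bounded and $Q\in L^2$ so dominated convergence handles the potential term; the paper's finer expansion is only needed later for the blow-up rates. Your expected obstacle about ``fractional commutator estimates'' therefore does not arise here, and the dichotomy splitting for $\|(-\Delta)^{s/2}\cdot\|_{L^2}^2$ is done in the paper via the double-integral formula \eqref{equi-norm}, not the Caffarelli--Silvestre extension.

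\textbf{One weak step.} Your argument for the strict lower bound $I(a)>\tfrac a2\min V$---claiming that a minimizing sequence with $\|(-\Delta)^{s/2}u_n\|_{L^2}\to0$ must satisfy $\int V|u_n|^2\to\overline V\,a$ via a ``low/high frequency splitting''---is not obviously justified and is not how the paper proceeds. In fact the paper gets this inequality \emph{a posteriori}: once a minimizer $u$ exists for $a_*<a<a^*$, one has $I(a)=E(u)\ge\tfrac12\bigl(1-(a/a^*)^{2s/d}\bigr)\|(-\Delta)^{s/2}u\|_{L^2}^2+\tfrac a2\min V$, and $\|(-\Delta)^{s/2}u\|_{L^2}>0$ (else $u\equiv0$) gives the strict inequality immediately. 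You should replace your frequency argument by this.
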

This result gives a complete classification of the existence and non-existence of minimizers for $I(a)$ with periodic potentials. The proof is based on the concentration-compactness principle in the same spirit of Lions \cite{Lions1, Lions2}. However, since we are dealing with the non-local opeartor $(-\Delta)^s$, we need a careful analysis in order to get this concentration-compactness principle. We refer the reader to Section $\ref{section-preliminaries}$ for more details.

Our next result is the blow-up behavior of minimizers for $I(a)$ as $a \nearrow a^*$ in the mass-critical case $\alpha=\frac{4s}{d}$. Note that since $E(|u|) \leq E(u)$, we only need to consider non-negative minimizers for $I(a)$.

\begin{theorem} \label{theo-blowup}
	Let $d\geq 1$, $0<s<1$, $\alpha=\frac{4s}{d}$ and $V \in C(\R^d)$ satisfy \emph{(V1)} and \emph{(V2)}. Let $u_a$ be a non-negative minimizer for $I(a)$ with $a_*<a<a^*$ given in Theorem $\ref{theo-existence}$. Then $u_a$ blows up as $a \nearrow a^*$ in the sense that
	\begin{align} \label{blowup-meaning}
	\varep_a^{-s}:= \|(-\Delta)^{\frac{s}{2}} u_a\|_{L^2} \rightarrow \infty
	\end{align}
	as $a \nearrow a^*$. Moreover, up to a subsequence, there exist $(x_a)_{a \nearrow a^*} \subset [0,1]^d$ and $(z_a)_{a \nearrow a^*} \subset \Z^d$ such that $x_a \rightarrow x^0 \in [0,1]^d$ with $V(x^0)=\min V$ and
	\[
	\varep_a^{\frac{d}{2}} u_a(\varep_a \cdot + x_a + z_a) \rightarrow Q \text{ strongly in } H^s(\R^d) 
	\]
	as $a \nearrow a^*$, where $Q$ is the unique (up to translations) positive radial ground state related to \eqref{ell-equ-fra-cri}.
\end{theorem}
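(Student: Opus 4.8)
The plan is to follow the classical blow-up-profile strategy (as in \cite{GS}, \cite{WZ}, \cite{Phan}, \cite{DTWZ}), adapted to the non-local setting. First I would establish the blow-up of the kinetic energy \eqref{blowup-meaning}. Using a suitable trial function -- take $Q$ rescaled and translated so its bulk concentrates near a minimum point $x^0$ of $V$, and normalized to have $L^2$-mass $a$ -- together with the fractional Gagliardo--Nirenberg inequality of Theorem~\ref{theo-fractional-GN} and the sharp constant $a^*=\|Q\|_{L^2}^2$, one gets $I(a) \to 0$ as $a \nearrow a^*$ (more precisely $I(a) \le \tfrac{a}{2}\min V + o(1)$, and combined with $I(a) > \tfrac{a}{2}\min V$ this pins down $I(a) \to \tfrac{a^*}{2}\min V$). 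On the other hand, from the variational identity $E(u_a)=I(a)$ and the sharp Gagliardo--Nirenberg inequality, the potential and nonlinear terms are controlled so that $\tfrac12\|(-\Delta)^{s/2}u_a\|_{L^2}^2\bigl(1-\tfrac{a}{a^*}\bigr) \le I(a) - \tfrac{a}{2}\min V + (\text{error})$; if $\|(-\Delta)^{s/2}u_a\|_{L^2}$ stayed bounded along a subsequence, standard compactness would force a minimizer at $a=a^*$ of the potential-free problem concentrated at a point, contradicting either the strict inequality (V2) or the non-existence at $a^*$ with a non-trivial potential contribution. This yields \eqref{blowup-meaning} and also the sharp rate: $\varep_a^{-s}=\|(-\Delta)^{s/2}u_a\|_{L^2}\to\infty$ while $1-\tfrac{a}{a^*}$ controls $\varep_a^{2s}$ from above.

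Next I would identify the profile. Set $v_a(x):=\varep_a^{d/2} u_a(\varep_a x)$, so $\|v_a\|_{L^2}^2=a\to a^*$ and $\|(-\Delta)^{s/2}v_a\|_{L^2}=1$. From $E(u_a)=I(a)$ and the energy upper bound, after rescaling one finds that $v_a$ is, asymptotically, a minimizing sequence for the Weinstein functional $J$ in \eqref{weinstein-functional}: the nonlinear term $\|v_a\|_{L^{\alpha+2}}^{\alpha+2}$ is bounded below away from zero (otherwise the energy could not approach $\tfrac{a^*}{2}\min V$ from below), and the combination $\tfrac12 - \tfrac{1}{\alpha+2}\|v_a\|_{L^{\alpha+2}}^{\alpha+2}\cdot(\ldots)$ forces $J(v_a)\to J(Q)$. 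The non-local version of the concentration-compactness principle (the one the authors build in Section~\ref{section-preliminaries} for Theorem~\ref{theo-existence}) applied to $(v_a)$ then rules out vanishing (the $L^{\alpha+2}$ lower bound) and dichotomy (strict subadditivity of the critical problem), so up to a subsequence there exist translations $y_a\in\R^d$ with $v_a(\cdot + y_a) \rightharpoonup w$ weakly in $H^s$ and, by the optimality $J(w)=J(Q)$ together with $\|w\|_{L^2}^2\le a^*$ and $\|(-\Delta)^{s/2}w\|_{L^2}\le 1$, in fact $w$ is an optimizer of $J$ with full mass, hence (after a further fixed scaling absorbed into the definition of $\varep_a$, using the Euler--Lagrange equation and the uniqueness from \cite{FL,FLS}) $w=Q$ up to a phase, and the convergence upgrades to strong convergence in $H^s$ because all the norms converge. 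Writing $y_a=x_a+z_a$ with $x_a\in[0,1]^d$, $z_a\in\Z^d$ gives the stated form.

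Finally I would locate the concentration point. Having strong $H^s$ convergence of $\varep_a^{d/2}u_a(\varep_a\cdot + x_a+z_a)$ to $Q$, I plug a more refined trial function into the energy: comparing $E(u_a)=I(a)$ against the energy of $Q$ rescaled and centered at \emph{an actual minimum point} $x_{\min}$ of $V$ shows that the potential term $\tfrac12\int V |u_a|^2\,dx$, which concentrates near $x_a+z_a$, cannot exceed $\tfrac{a^*}{2}\min V + o(1)$; by periodicity (V1) this is $\tfrac12\int V(\varep_a y + x_a)\,|v_a(y)|^2\,dy$, and letting $a\nearrow a^*$ with $v_a\to Q$ in $L^2$ and $\varep_a\to 0$ forces $V(x^0)=\min V$ where $x^0=\lim x_a$ (along a subsequence $[0,1]^d$ being compact). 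The main obstacle, as usual in the non-local case, is the lack of a Caffarelli--Silvestre-free, self-contained compactness argument: one must handle the fractional concentration-compactness carefully (no finite-speed-of-propagation, truncations interact with the non-local norm), and one must be careful that the rescaling $\varep_a$ is defined so that the limiting equation is exactly \eqref{ell-equ-fra-cri} rather than a rescaled version -- this requires simultaneously tracking the Lagrange multiplier in the Euler--Lagrange equation for $u_a$ and showing it is compatible, in the limit, with $Q$ solving \eqref{ell-equ-fra-cri}.
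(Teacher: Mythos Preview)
Your proposal is essentially correct and follows the same route as the paper: contradiction via concentration-compactness for the blow-up, rescaling to an optimizing sequence for the Gagliardo--Nirenberg inequality for the profile, and a potential-energy/Fatou argument for the location of $x^0$.

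One simplification worth noting: the paper isolates the ``optimizing sequence for Gagliardo--Nirenberg'' step as a standalone compactness lemma (Lemma~\ref{lem-compact-mini-fGN}), whose hypotheses are exactly that $\|v_a\|_{L^2}^2\to a^*$, $\|(-\Delta)^{s/2}v_a\|_{L^2}^2-\tfrac{d}{d+2s}\|v_a\|_{L^{\frac{4s}{d}+2}}^{\frac{4s}{d}+2}\to 0$, and $\liminf\|v_a\|_{L^{\frac{4s}{d}+2}}^{\frac{4s}{d}+2}>0$. These follow immediately from $\|(-\Delta)^{s/2}v_a\|_{L^2}=1$ and the energy bound $\varep_a^{2s}(2I(a)-a\min V)\to 0$. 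The lemma then delivers $v_a(\cdot+y_a)\to \gamma^{d/2}Q(\gamma\cdot)$ strongly in $H^s$ directly, with $\gamma$ fixed by the normalization $\|(-\Delta)^{s/2}v_a\|_{L^2}=1$. In particular there is no need to invoke the Euler--Lagrange equation for $u_a$ or to track the Lagrange multiplier: the obstacle you flag at the end (``tracking the Lagrange multiplier \ldots\ and showing it is compatible, in the limit, with $Q$ solving \eqref{ell-equ-fra-cri}'') simply does not arise, because the profile is identified via the variational characterization of $Q$ as a Gagliardo--Nirenberg optimizer rather than via the equation. Your remark about a ``sharp rate'' at the end of the first step is also premature---no rate is obtained here; that belongs to Theorem~\ref{theo-blowup-refined}.
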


The proof of this result is based on the compactness of optimizing sequence for the Gagliardo-Nirenberg inequatlity (see Lemma $\ref{lem-compact-mini-fGN}$) which is another consequence of the concentration-compactness principle. 

Note that Theorem $\ref{theo-blowup}$ does not give any information on the blow-up rate of $\|(-\Delta)^{\frac{s}{2}} u_a\|_{L^2}$. Under an additional assumption on the external potential, we gave a detailed description of the blow-up behavior of minimizers for $I(a)$. More precisely, we have the following result.

\begin{theorem} \label{theo-blowup-refined}
	Let $d\geq 1$ and $0<s<1$. Let $\alpha=\frac{4s}{d}$. Let $V \in C(\R^d)$ satisfy \emph{(V1)} and \emph{(V2)}. Assume in addition that $V$ satisfies
	\begin{itemize}
		\item[(V3)] $\min V=0$, $V^{-1}(0)=x_0 + \Z^d$ for some $x_0 \in [0,1]^d$ and there exist $0<p<d+4s$ and $\kappa>0$ such that
		\[
		\lim_{x\rightarrow x_0} \frac{V(x)}{|x-x_0|^p} = \kappa.
		\]
	\end{itemize}
	Let $u_a$ be a non-negative minimizer for $I(a)$ with $a_*<a<a^*$ given in Theorem $\ref{theo-existence}$. Then up to a subsequence, there exist sequences $(x_a)_{a \nearrow a^*} \subset [0,1]^d$, $(z_a)_{a\nearrow a^*} \subset \Z^d$ such that $x_a \rightarrow x_0$ with
	\[
	\lim_{a \nearrow a^*} \frac{x_a-x_0}{\beta^{\frac{1}{2s+p}}_a} = 0
	\]
	and
	\[
	\beta_a^{\frac{d}{2(2s+p)}} u_a(\beta_a^{\frac{1}{2s+p}} \cdot + x_a + z_a) \rightarrow \lambda_0^{\frac{d}{2}} Q(\lambda_0 \cdot) \text{ strongly in } H^s(\R^d) \text{ as } a\nearrow a^*,
	\]
	where $Q$ is the unique (up to translations) positive radial ground state related to \eqref{ell-equ-fra-cri} and
	\begin{align} \label{blowup-scaling}
	\beta_a:= 1- \left(\frac{a}{a^*} \right)^\frac{2s}{d}, \quad \lambda_0 = \left( \frac{\kappa p}{d} \int_{\R^d} |x|^p [Q_0(x)]^2 dx \right)^{\frac{1}{2s+p}}, \quad Q_0= \frac{Q}{\|Q\|_{L^2}}.
	\end{align}
\end{theorem}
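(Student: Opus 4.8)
The plan is to build on Theorem~\ref{theo-blowup}, which already gives the existence of sequences $(x_a)\subset[0,1]^d$, $(z_a)\subset\Z^d$ and a scale $\varep_a^{-s}=\|(-\Delta)^{s/2}u_a\|_{L^2}\to\infty$ such that, up to a subsequence, $w_a:=\varep_a^{d/2}u_a(\varep_a\cdot+x_a+z_a)\to Q$ strongly in $H^s$, with $x_a\to x^0$ and $V(x^0)=\min V=0$. Under (V3) this forces $x^0\in x_0+\Z^d$, and since $x^0\in[0,1]^d$ we may take $x^0=x_0$ (adjusting $z_a$). The two things still to prove are: (i) the precise blow-up \emph{rate}, i.e.\ that $\varep_a$ is comparable to $\beta_a^{1/(2s+p)}$, so that one should rescale by $\beta_a^{1/(2s+p)}$ rather than $\varep_a$; and (ii) the refined convergence with the extra dilation $\lambda_0$, together with $(x_a-x_0)/\beta_a^{1/(2s+p)}\to0$. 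The whole argument is driven by sharp two-sided \emph{energy estimates} for $I(a)$ as $a\nearrow a^*$.

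First I would establish the upper bound. Fix the ground state $Q$, set $Q_0=Q/\|Q\|_{L^2}$, and use as trial function $u=\sqrt{a}\,\tau^{d/2}Q_0(\tau(x-x_0))$ for a scale parameter $\tau=\tau(a)$ to be optimized (recall $V\ge0$, $V(x_0)=0$). By the fractional Pohozaev/Gagliardo--Nirenberg identities for $Q$ (from Theorem~\ref{theo-fractional-GN}) the kinetic minus nonlinear part contributes $\tfrac12\tau^{2s}\|(-\Delta)^{s/2}Q_0\|_{L^2}^2(1-(a/a^*)^{2s/d})=\tfrac12\tau^{2s}\|(-\Delta)^{s/2}Q_0\|_{L^2}^2\,\beta_a$, while the potential term is $\tfrac{a}{2}\int V(x)\tau^d Q_0(\tau(x-x_0))^2\,dx$; using (V3), $\int V(\tau^{-1}y+x_0)\tau^d Q_0(\tau(\cdot))^2 = \tau^{-p}\big(\kappa\int|y|^p Q_0^2\,dy+o(1)\big)$ as $\tau\to\infty$. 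Balancing $\tau^{2s}\beta_a$ against $\tau^{-p}$ gives the optimal $\tau\sim \beta_a^{-1/(2s+p)}$ and the bound $I(a)\le \tfrac{a}{2}\,C_*\,\beta_a^{p/(2s+p)}(1+o(1))$, where $C_*$ is exactly the constant produced by the scaling \eqref{blowup-scaling} (it is an elementary one-variable minimization whose minimizer encodes $\lambda_0$). For the matching \emph{lower} bound I would start from an arbitrary non-negative minimizer $u_a$, use the Gagliardo--Nirenberg inequality with sharp constant to write $E(u_a)\ge \tfrac12\|(-\Delta)^{s/2}u_a\|_{L^2}^2(1-(a/a^*)^{2s/d})+\tfrac12\int V|u_a|^2 = \tfrac12\varep_a^{-2s}\beta_a+\tfrac12\int V|u_a|^2$, and then bound $\int V|u_a|^2$ below using the concentration profile: writing $u_a$ in terms of $w_a$ and using $x_a\to x_0$ (mod $\Z^d$), Fatou/(V3) gives $\int V|u_a|^2 \gtrsim \varep_a^p\big(\kappa\int|y|^p Q^2 + o(1)\big)$. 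Minimizing $\tfrac12\varep_a^{-2s}\beta_a+\tfrac12\kappa\varep_a^p\int|y|^pQ^2$ over $\varep_a>0$ reproduces the same constant $C_*\beta_a^{p/(2s+p)}$, so the two bounds match asymptotically.

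The sharp two-sided estimate then does double duty. Comparing the lower-bound computation with the actual value of $I(a)$ forces both inequalities in the chain to be asymptotic equalities; in particular $\varep_a^{-2s}\beta_a$ and $\varep_a^p$ are each comparable to $\beta_a^{p/(2s+p)}$, which yields $\varep_a\asymp\beta_a^{1/(2s+p)}$ — this is the blow-up rate. Set $\mu_a:=\beta_a^{-1/(2s+p)}\varep_a$, so $\mu_a$ is bounded above and below; passing to a further subsequence, $\mu_a\to\mu_\infty\in(0,\infty)$. Rescaling $w_a$ by $\mu_a$, i.e.\ $v_a(x):=\beta_a^{d/(2(2s+p))}u_a(\beta_a^{1/(2s+p)}x+x_a+z_a)=\mu_a^{-d/2}w_a(\mu_a^{-1}\cdot)$, and using $w_a\to Q$ in $H^s$ gives $v_a\to \mu_\infty^{-d/2}Q(\mu_\infty^{-1}\cdot)$ strongly in $H^s$. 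It remains to identify $\mu_\infty^{-1}=\lambda_0$: plug this limiting profile back into the energy identity (now an \emph{equality} asymptotically), i.e.\ into $\tfrac12\varep_a^{-2s}\beta_a+\tfrac12\kappa\varep_a^p\int|y|^pQ^2(1+o(1))=I(a)=\tfrac12 C_*\beta_a^{p/(2s+p)}(1+o(1))$, which pins down $\mu_\infty$ as the unique minimizer of the one-variable function $t\mapsto\tfrac12 t^{-2s}+\tfrac12\kappa t^p\int|y|^pQ_0^2$ appropriately normalized; a direct differentiation shows this minimizer is exactly $\lambda_0^{-1}$ with $\lambda_0$ as in \eqref{blowup-scaling}. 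Finally, to get $(x_a-x_0)/\beta_a^{1/(2s+p)}\to0$, I would re-examine the potential term: if $y_a:=(x_a-x_0)/\beta_a^{1/(2s+p)}$ did not go to $0$ along some subsequence, then by (V3) the rescaled potential $\beta_a^{-p/(2s+p)}V(\beta_a^{1/(2s+p)}\cdot+x_a)$ would converge (after the change of variables) to $\kappa|x+y_a+(\text{shift})|^p$ with a strictly larger integral against $Q_0^2(\lambda_0\cdot)$ than the centered one (since $y\mapsto\int|y+c|^pQ_0^2$ is minimized uniquely at $c=0$ by symmetry and strict convexity considerations), contradicting the optimality encoded in the matching energy bounds.

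The main obstacle I anticipate is the lower bound on $\int V|u_a|^2$ and, more precisely, justifying that along the blow-up one may replace $u_a$ by its limiting profile \emph{inside} the potential integral uniformly enough to extract the constant $\kappa\int|y|^pQ^2$ with no loss: this requires knowing the rate $\varep_a\asymp\beta_a^{1/(2s+p)}$ \emph{before} one has the refined convergence, so the estimate has to be bootstrapped — a crude a priori lower bound $\liminf \int V|u_a|^2/\varep_a^p\ge \kappa\int|y|^pQ^2$ from Fatou applied to $w_a\to Q$, fed into the energy inequality, gives $\varep_a\lesssim\beta_a^{1/(2s+p)}$, which combined with the upper bound on $I(a)$ gives the reverse; only then does the full rescaled convergence, and hence the sharp equality, follow. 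Handling the integer translates $z_a$ carefully (so that the periodic potential really does look like $|x-x_0|^p$ near the concentration point, using $V(\cdot+z_a)=V$) and ruling out $\mu_\infty\in\{0,\infty\}$ are the other technical points, but both are controlled by the same sharp energy identity.
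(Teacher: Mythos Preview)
Your proposal is correct and follows essentially the same strategy as the paper: sharp two-sided energy bounds $I(a)/a\asymp\beta_a^{p/(2s+p)}$ via a trial function at $x_0$ and a Fatou/(V3) lower bound on $\int V|u_a|^2$, then energy matching to pin down $\lambda_0$ and force the recentering $(x_a-x_0)/\beta_a^{1/(2s+p)}\to 0$. The only cosmetic difference is that you first invoke Theorem~\ref{theo-blowup} at scale $\varep_a$, prove $\varep_a\asymp\beta_a^{1/(2s+p)}$, and then rescale by $\mu_a$, whereas the paper rescales by $\beta_a^{1/(2s+p)}$ from the outset and applies the compactness Lemma~\ref{lem-compact-mini-fGN} directly (using a separate a priori bound $\|(-\Delta)^{s/2}u_a\|_{L^2}^2\lesssim\beta_a^{-2s/(2s+p)}$ and $\|u_a\|_{L^{4s/d+2}}^{4s/d+2}\gtrsim\beta_a^{-2s/(2s+p)}$); the underlying ideas and the identification of $\lambda_0$ and $x^0=0$ are identical.
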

Theorem $\ref{theo-blowup-refined}$ gives a detailed description of the blow-up behavior of minimizers for $I(a)$ as $a \nearrow a^*$. In particular, the minimizer blows up at speed $\beta_a^{-\frac{1}{2s+p}}$ as $a \nearrow a^*$. The proof of Theorem $\ref{theo-blowup-refined}$ is based on energy estimates (see Lemma $\ref{lem-energy-est-cri}$). In \cite{DTWZ} and \cite{HL}, these energy estimates were obtained by analysing the corresponding Euler-Lagrange equation related to the minimizers. In this paper, we give an alternative approach based on the compactness of optimizing sequence for the fractional Gagliardo-Nirenberg inequality. Our argument is simpler and more direct than the ones in \cite{DTWZ, HL}. We finally point out that in constrast of the classical Schr\"odinger equation $s=1$ in which the ground state decays exponentially at infinity, the ground state for fractional Schr\"odinger equation $0<s<1$ decays only polynomially at infinity. This is the reason why we have to impose an additional condition $p<d+4s$ in (V3) which ensures that $|\cdot|^p [Q_0]^2$ is integrable.

The paper is organized as follows. In Section $\ref{section-preliminaries}$, we give some preliminaries related to our problem including the fractional Sobolev spaces, the fractional Gagliardo-Nirenberg inequality, the radial compactness embedding and the concentration-compactness principle. In Section $\ref{section-existence}$, we prove the existence and non-existence of minimizers in the case of no external potential and in the case of periodic potentials. Finally, we give the proof of blow-up behavior of minimizers in Section $\ref{section-blowup}$. 

\section{Preliminaries} \label{section-preliminaries}
\setcounter{equation}{0}

\subsection{Fractional Sobolev spaces}
For the reader's convenience, we recall the definition and some properties of the fractional Sobolev spaces. The fractional Laplacian $(-\Delta)^s$ is defined for $u \in \mathcal{S}(\R^d)$ by
\[
(-\Delta)^s u (x) := \mathcal{F}^{-1} \left( |\xi|^{2s} \mathcal{F} u(\xi) \right)(x),
\]
where $\mathcal{F}$ is the isometric Fourier transform in $L^2(\R^d)$
\[
\mathcal{F}(u) (\xi) = (2\pi)^{-\frac{d}{2}} \int_{\R^d} e^{-ix\cdot \xi} u(x) dx.
\] 
An alternative equivalent definition (see \cite{DPV}) is
\begin{align*}
(-\Delta)^s u(x) &= C(d,s) \lim_{\varep \searrow 0} \int_{\R^d \backslash B(0,\varep)} \frac{u(x) - u(y)}{|x-y|^{d+2s}} dy \\
&=-\frac{1}{2} C(d,s) \int_{\R^d} \frac{u(x+y) + u(x-y) - 2u(x)}{|y|^{d+2s}} dy,
\end{align*}
where
\begin{align} \label{defi-C-ds}
C(d,s) := \left(\int_{\R^d} \frac{1-\cos \zeta_1}{|\zeta|^{d+2s}} d\zeta \right)^{-1}.
\end{align}
For any $s \in (0,1)$, the fractional Sobolev space $H^s(\R^d)$ is defined by
\begin{align*}
H^s(\R^d) := \left\{ u \in L^2(\R^d) \ : \  (-\Delta)^{\frac{s}{2}} u \in L^2(\R^d)\right\}.
\end{align*}
The space $H^s(\R^d)$ is a Hilbert space endowed with the inner product
\[
\scal{u,v}_{H^s} := \mathfrak{Re} \int_{\R^d} u \overline{v} dx + \mathfrak{Re}\int_{\R^d} (-\Delta)^{\frac{s}{2}} u \overline{(-\Delta)^{\frac{s}{2}}v} dx
\]
and norm
\[
\|u\|^2_{H^s} = \|u\|^2_{L^2} + \|(-\Delta)^{\frac{s}{2}} u\|^2_{L^2}.
\]
Note that (\cite[Proposition 3.4]{DPV})
\begin{align} \label{equi-norm}
\|(-\Delta)^{\frac{s}{2}} u\|^2_{L^2} = \int_{\R^d} |\xi|^{2s} |\mathcal{F}u(\xi)|^2 d\xi = \frac{1}{2} C(d,s) \iint_{\R^{2d}} \frac{|u(x) - u(y)|^2}{|x-y|^{d+2s}} dx dy.
\end{align}

We have the following Sobolev embedding for fractional Sobolev spaces.
\begin{lemma} [Sobolev embedding \cite{BCD, DPV}] 
	Let $s \in (0,1)$. Then the following embeddings are continuous.
	\begin{itemize}
		\item $H^s(\R^d) \hookrightarrow L^q(\R^d)$, $2 \leq q \leq \frac{2d}{d-2s}$ if $d>2s$.
		\item $H^s(\R^d) \hookrightarrow L^q(\R^d)$, $2\leq q <\infty$ if $d=2s$.
		\item $H^s(\R) \hookrightarrow C_b(\R)$ if $1<2s$, where $C_b(\R)$ is the space of continuous bounded functions on $\R$. 
	\end{itemize}
	Moreover, $H^s(\R^d) \hookrightarrow L^q(\R^d)$ is locally compact for any $1\leq q <2^*$, where
	\begin{align} \label{defi-2-sup}
	2^* := \left\{
	\begin{array}{cl}
	\frac{2d}{d-2s} &\text{if } d>2s, \\
	\infty &\text{if } d \leq 2s.
	\end{array}
	\right.
	\end{align}
\end{lemma}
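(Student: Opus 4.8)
The plan is to prove the three continuous embeddings by Fourier-analytic estimates, splitting into the three regimes $d>2s$, $d=2s$ and $d<2s$ (which, since $0<s<1$, forces $d=1$), and then to deduce the local compactness from the Riesz--Fr\'echet--Kolmogorov criterion. Throughout I would write $g:=(-\Delta)^{\frac{s}{2}}u$, so that $\mathcal F u(\xi)=|\xi|^{-s}\mathcal F g(\xi)$ and $\|g\|_{L^2}=\|(-\Delta)^{\frac s2}u\|_{L^2}\le\|u\|_{H^s}$.

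For $d>2s$ I would realize $u$ as a Riesz potential of $g$. Since $u=(-\Delta)^{-\frac s2}g=I_s g$, where $I_s$ has the homogeneous kernel $c_{d,s}|x|^{-(d-s)}$, the Hardy--Littlewood--Sobolev inequality gives
\[
\|u\|_{L^{2^*}}=\|I_s g\|_{L^{2^*}}\le C\|g\|_{L^2},\qquad \frac{1}{2^*}=\frac12-\frac sd,
\]
which is exactly the critical exponent $2^*=\frac{2d}{d-2s}$ and requires $d>2s$ for $2^*<\infty$. For $2\le q<2^*$ one then interpolates by H\"older between $L^2$ and $L^{2^*}$: writing $\frac1q=\frac{1-\theta}{2}+\frac{\theta}{2^*}$ with $\theta\in[0,1]$ yields $\|u\|_{L^q}\le\|u\|_{L^2}^{1-\theta}\|u\|_{L^{2^*}}^{\theta}\le C\|u\|_{H^s}$, the case $q=2$ being trivial. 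In the borderline case $d=2s$ (so $d=1$, $s=\tfrac12$) the Sobolev exponent degenerates to $+\infty$, and I would prove $H^s\hookrightarrow L^q$ for every finite $q\ge2$ by Hausdorff--Young, $\|u\|_{L^q}\le C\|\mathcal F u\|_{L^{q'}}$ with $q'=\frac{q}{q-1}\in(1,2]$, after splitting $\mathcal F u=\mathcal Fu\,\mathbf 1_{|\xi|\le1}+\mathcal Fu\,\mathbf 1_{|\xi|>1}$; the low-frequency part lies in every $L^{q'}$ by H\"older on a bounded set, while on $\{|\xi|>1\}$ one writes $|\mathcal Fu|^{q'}=\big(|\xi|^{s}|\mathcal Fu|\big)^{q'}|\xi|^{-sq'}$ and applies H\"older with exponents $\frac2{q'}$ and $\frac{2}{2-q'}$, the tail $\int_{|\xi|>1}|\xi|^{-\frac{2sq'}{2-q'}}\,d\xi$ converging precisely because $q'>1$. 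This also makes transparent why $q=\infty$ is excluded.

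For the supercritical case $d=1$, $2s>1$ I would show $\mathcal F u\in L^1(\R)$ directly. By Cauchy--Schwarz,
\[
\int_{\R}|\mathcal Fu(\xi)|\,d\xi\le\Big(\int_{\R}(1+|\xi|^{2s})|\mathcal Fu|^2\,d\xi\Big)^{1/2}\Big(\int_{\R}\frac{d\xi}{1+|\xi|^{2s}}\Big)^{1/2}\le C\|u\|_{H^s},
\]
the last integral being finite exactly when $2s>1=d$. Hence $u=\mathcal F^{-1}(\mathcal Fu)$ coincides a.e.\ with a function in $C_b(\R)$ (the inverse transform of an $L^1$ function), with $\|u\|_{L^\infty}\le(2\pi)^{-1/2}\|\mathcal Fu\|_{L^1}\le C\|u\|_{H^s}$; continuity and decay at infinity follow from Riemann--Lebesgue.

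Finally, for the local compactness I would fix $R>0$ and a bounded sequence $(u_n)\subset H^s(\R^d)$ and apply the Riesz--Fr\'echet--Kolmogorov criterion on the ball $B_R$. Uniform $L^q(B_R)$-boundedness is immediate from the continuous embedding into $L^{2^*}$ (or into every $L^q$ when $d\le 2s$) together with H\"older on the bounded set $B_R$. The essential ingredient is equicontinuity of translations: from Plancherel and the elementary bound $|e^{ih\cdot\xi}-1|^2\le C|h|^{2s}|\xi|^{2s}$ (valid for $0<s\le1$ since $4\sin^2(\theta/2)\le\min(4,\theta^2)\le C|\theta|^{2s}$), one gets
\[
\|u(\cdot+h)-u\|_{L^2}^2=\int_{\R^d}|e^{ih\cdot\xi}-1|^2|\mathcal Fu|^2\,d\xi\le C|h|^{2s}\|(-\Delta)^{\frac s2}u\|_{L^2}^2,
\]
so that $\sup_n\|u_n(\cdot+h)-u_n\|_{L^2}\to0$ as $|h|\to0$. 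Interpolating this $L^2$ modulus of continuity against the uniform $L^{2^*}$ bound upgrades it to $L^q$ for every $q<2^*$ (the range $1\le q<2$ reducing to $L^2$ by H\"older on $B_R$). Riesz--Fr\'echet--Kolmogorov then yields precompactness of $(u_n)$ in $L^q(B_R)$, and since $R$ is arbitrary the embedding $H^s\hookrightarrow L^q$ is locally compact for $1\le q<2^*$. I expect the main obstacle to be precisely this last step: transferring the translation equicontinuity from $L^2$ to the full subcritical range uniformly in $n$, and keeping track of the fact that compactness fails exactly at the critical exponent $q=2^*$, so that the strictness $q<2^*$ is indispensable.
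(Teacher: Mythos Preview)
The paper does not prove this lemma at all: it is stated with citations to \cite{BCD, DPV} and no argument is given. Your proposal therefore cannot be compared to ``the paper's own proof,'' because there is none to compare against.

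That said, your sketch is essentially correct and follows the standard route one finds in the cited references. The Riesz-potential/Hardy--Littlewood--Sobolev argument for $d>2s$, the Hausdorff--Young splitting for $d=2s$, and the $L^1$-Fourier bound via Cauchy--Schwarz for $d=1<2s$ are all sound. For the local compactness your Riesz--Fr\'echet--Kolmogorov argument works; the translation-equicontinuity bound $\|u(\cdot+h)-u\|_{L^2}\le C|h|^s\|(-\Delta)^{s/2}u\|_{L^2}$ is exactly what is needed, and interpolating against the uniform $L^{2^*}$ (or $L^q$, $q<\infty$) bound upgrades it to the full subcritical range. One minor remark: your ``main obstacle'' is not really an obstacle, since the interpolation step is routine---writing $\frac1q=\frac{1-\theta}{2}+\frac{\theta}{2^*}$ gives $\|u_n(\cdot+h)-u_n\|_{L^q}\le\|u_n(\cdot+h)-u_n\|_{L^2}^{1-\theta}\|u_n(\cdot+h)-u_n\|_{L^{2^*}}^{\theta}$, and the second factor is uniformly bounded while the first goes to zero uniformly in $n$; for $1\le q<2$ H\"older on $B_R$ reduces to the $L^2$ case immediately.
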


\begin{lemma} [\cite{BCD}]
	Let $s\in (0,1)$. The space $\mathcal{S}(\R^d)$ is dense in $H^s(\R^d)$. Moreover, the multiplication by a Schwarz function is a continuous map from $H^s(\R^d)$ to itself.
\end{lemma}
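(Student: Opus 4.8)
The final statement consists of two assertions, both most naturally proved on the Fourier side, where $H^s(\R^d)$ is the weighted $L^2$ space with weight $(1+|\xi|^{2s})$, or equivalently $(1+|\xi|^2)^s$ up to equivalence of norms. The plan is to treat density and the multiplication property separately, using the Fourier characterization $\|u\|_{H^s}^2 \asymp \int_{\R^d}(1+|\xi|^2)^s|\mathcal F u(\xi)|^2\,d\xi$ recorded in \eqref{equi-norm}.

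\textbf{Density of $\mathcal S$.} First I would observe that $\mathcal F$ is an isometric isomorphism of $L^2(\R^d)$ that maps $\mathcal S(\R^d)$ onto itself. Hence it suffices to show that the image $\mathcal F(\mathcal S)=\mathcal S$ is dense in the weighted space $L^2\big((1+|\xi|^2)^s\,d\xi\big)$. Given $u\in H^s$, set $\hat u=\mathcal F u$, so $(1+|\xi|^2)^{s/2}\hat u\in L^2$. I would first truncate: let $\hat u_R:=\hat u\,\mathds 1_{\{|\xi|\le R\}}$, which converges to $\hat u$ in the weighted norm as $R\to\infty$ by dominated convergence. Each $\hat u_R$ is compactly supported and lies in $L^2$, hence in $L^1\cap L^2$. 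Then I would mollify $\hat u_R$ against a standard smooth mollifier $\varphi_\delta$; the convolution $\varphi_\delta*\hat u_R$ is smooth, compactly supported for $\delta$ small (support grows by at most $\delta$), hence belongs to $\mathcal S$, and converges to $\hat u_R$ in $L^2$ as $\delta\to0$. Because the weight $(1+|\xi|^2)^s$ is bounded on the fixed compact set $\{|\xi|\le R+1\}$ containing all the supports, $L^2$-convergence there upgrades to convergence in the weighted norm. A diagonal argument then produces a sequence in $\mathcal S$ converging to $\hat u$ in $L^2\big((1+|\xi|^2)^s\,d\xi\big)$; applying $\mathcal F^{-1}$ and using that $\mathcal F^{-1}$ is an isomorphism onto $\mathcal S$ gives the desired approximating sequence in $\mathcal S(\R^d)$ converging to $u$ in $H^s$.

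\textbf{Multiplication by a Schwartz function.} For $\phi\in\mathcal S(\R^d)$ I would show that $u\mapsto\phi u$ is bounded on $H^s$. On the Fourier side, $\mathcal F(\phi u)=(2\pi)^{-d/2}\,\hat\phi*\hat u$, so I must bound $\int(1+|\xi|^2)^s|(\hat\phi*\hat u)(\xi)|^2\,d\xi$ by a constant times $\int(1+|\xi|^2)^s|\hat u(\xi)|^2\,d\xi$. The key analytic input is Peetre's inequality, $(1+|\xi|^2)^{s}\le 2^{|s|}(1+|\xi-\eta|^2)^{|s|}(1+|\eta|^2)^{s}$ for $s\in(0,1)$, which lets me distribute the weight between the two factors of the convolution. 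Writing the $H^s$-norm of $\phi u$ as an $L^2$-norm and estimating the convolution by Minkowski's integral inequality, I would obtain
\begin{align*}
\|\phi u\|_{H^s}
&\lesssim \int_{\R^d} |\hat\phi(\eta)|\,(1+|\eta|^2)^{s/2}\,\Big\|(1+|\cdot|^2)^{s/2}\hat u\Big\|_{L^2}\,d\eta
= C_\phi\,\|u\|_{H^s},
\end{align*}
where $C_\phi=(2\pi)^{-d/2}\int_{\R^d}|\hat\phi(\eta)|(1+|\eta|^2)^{s/2}\,d\eta$ is finite precisely because $\hat\phi\in\mathcal S$ decays rapidly. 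This establishes continuity with an explicit bound.

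\textbf{Main obstacle.} The routine parts—truncation, mollification, and the isomorphism properties of $\mathcal F$—are standard. The step requiring the most care is the multiplication estimate: one must correctly split the fractional weight across the convolution, and the clean tool for this is Peetre's inequality; applying Minkowski's integral inequality to the convolution (rather than attempting a pointwise Young-type bound) is what keeps the argument transparent and yields the finite constant $C_\phi$. I would note that for $s\in(0,1)$ one has $|s|=s$, so Peetre's inequality simplifies accordingly.
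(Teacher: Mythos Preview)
The paper does not supply its own proof of this lemma: it is simply cited from \cite{BCD}. Your Fourier-side argument is correct and standard. The density step (truncate $\hat u$ to a ball, mollify, use boundedness of the weight on the fixed compact support) is fine, and your multiplication estimate via Peetre's inequality together with Minkowski's integral inequality is the textbook route; the constant $C_\phi=(2\pi)^{-d/2}\int(1+|\eta|^2)^{s/2}|\hat\phi(\eta)|\,d\eta$ is indeed finite for $\phi\in\mathcal S$.

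It is worth noting that, although the paper does not prove the lemma here, it effectively re-derives the multiplication bound by a different method later, in the proof of the concentration-compactness lemma (see the estimate \eqref{product-est}). There the Gagliardo double-integral representation \eqref{equi-norm} is used: one expands $|\varphi_R(x)v(x)-\varphi_R(y)v(y)|^2$, controls the cross term by Cauchy--Schwarz, and bounds $\iint|x-y|^{-d-2s}|\varphi_R(x)-\varphi_R(y)|^2|v(y)|^2\,dx\,dy$ by splitting into $|x-y|\le R$ (using $|\nabla\varphi_R|\lesssim R^{-1}$) and $|x-y|\ge R$. That real-space computation gives the same continuity statement and, in addition, the quantitative decay $O(R^{-s})$ in the cutoff scale needed for the dichotomy analysis. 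Your Fourier-side proof is cleaner for the bare continuity statement; the paper's real-space version buys the explicit remainder estimate.
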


\subsection{Fractional Gagliardo-Nirenberg inequality}
Let us recall the following fractional Gagliardo-Nirenberg inequality
\begin{align} \label{fractional-GN}
\|u\|^{\alpha+2}_{L^{\alpha+2}} \leq C_{\text{opt}} \|(-\Delta)^{\frac{s}{2}} u\|^{\frac{d\alpha}{2s}}_{L^2} \|u\|^{\alpha+2 - \frac{d\alpha}{2s}}_{L^2}.
\end{align}
The sharp constant $C_{\text{opt}}>0$ can be obtained by
\[
\frac{1}{C_{\text{opt}}} = \inf\left\{J(u) \ : \ u \in H^s(\R^d) \backslash \{0\}\right\},
\]
where
\[
J(u):= \left[ \|(-\Delta)^{\frac{s}{2}} u\|^{\frac{d\alpha}{2s}}_{L^2} \|u\|^{\alpha+2 - \frac{d\alpha}{2s}}_{L^2} \right] \div \|u\|^{\alpha+2}_{L^{\alpha+2}}
\]
is the Weinstein functional. We have the following result due to \cite{DTWZ, FL, FLS}.
\begin{theorem} [\cite{DTWZ, FL, FLS}] \label{theo-fractional-GN}
	Let $d\geq 1$, $0<s<1$ and $0<\alpha<s^*$. Then the fractional Gagliardo-Nirenberg inequality \eqref{fractional-GN} is attained at a function $Q_\alpha \in H^s(\R^d)$ with the following properties:
	\begin{itemize}
		\item $Q_\alpha(x)$ is radially symmetric, positive and strictly decreasing in $|x|$.
		\item $Q_\alpha$ belongs to $H^{2s+1}(\R^d) \cap C^\infty(\R^d)$ and satisfies
		\begin{align}
		\frac{C_1}{1+|x|^{d+2s}} \leq Q_\alpha(x) &\leq \frac{C_2}{1+|x|^{d+2s}}, \label{decay-1} \\
		|\partial_j Q_\alpha(x)| &\leq \frac{C_3}{1+|x|^{d+2s}}, \quad j=1, \cdots, d, \label{decay-2}
		\end{align}
		for all $x \in \R^d$, where $C_1, C_2$ and $C_3$ are positive constants depending on $s, d, \alpha$ and $Q_\alpha$.
		\item $Q_\alpha$ solves the elliptic equation \eqref{ell-equ-fra}.
	\end{itemize}
	Moreover, every minimizer $\varphi \in H^s(\R^d)$ for the fractional Gagliardo-Nirenberg inequality \eqref{fractional-GN} is of the form $\varphi(x) = \beta Q_\alpha(\gamma(x+y))$ with some $\beta \in \C, \beta \ne 0$, $\gamma>0$ and $y \in \R^d$.
\end{theorem}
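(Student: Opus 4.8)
The plan is to handle the five assertions—attainment; symmetry, positivity and monotonicity; regularity and decay; the Euler--Lagrange equation; and uniqueness up to symmetries—in that order, exploiting the scaling invariance of $J$ throughout. First I would record that $J$ is invariant under the two-parameter family $u \mapsto \beta\, u(\gamma\,\cdot)$ with $\beta \in \C\setminus\{0\}$, $\gamma>0$: the dilation scales $\|u\|^2_{L^2}$, $\|(-\Delta)^{\frac{s}{2}}u\|^2_{L^2}$ and $\|u\|^{\alpha+2}_{L^{\alpha+2}}$ by $\gamma^{-d}$, $\gamma^{2s-d}$, $\gamma^{-d}$ respectively, and the homogeneity degrees in the numerator and denominator cancel to leave $J$ unchanged; the $\beta$-invariance is immediate from homogeneity. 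Consequently, along a minimizing sequence for $J$ I may rescale so that $\|u_n\|_{L^2}=\|(-\Delta)^{\frac{s}{2}}u_n\|_{L^2}=1$, after which minimizing $J$ amounts to maximizing $\|u_n\|^{\alpha+2}_{L^{\alpha+2}}$ under these two constraints.

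For attainment I would pass to symmetric decreasing rearrangements $u_n^*$. The fractional P\'olya--Szeg\H{o} inequality $\|(-\Delta)^{\frac{s}{2}}u^*\|_{L^2}\le\|(-\Delta)^{\frac{s}{2}}u\|_{L^2}$, which follows from the double-integral representation \eqref{equi-norm} and the Riesz rearrangement inequality, together with the equimeasurability of rearrangement, preserves the $L^2$ and $L^{\alpha+2}$ norms while not increasing the Gagliardo seminorm; hence $J(u_n^*)\le J(u_n)$ and I may assume the minimizing sequence is radial and nonincreasing, still bounded in $H^s$. By the radial compactness embedding recalled in Section \ref{section-preliminaries}, a subsequence converges strongly in $L^{\alpha+2}$ (note $2<\alpha+2<2^*$ since $0<\alpha<s^*$) to some $Q_\alpha\not\equiv0$ attaining the maximal $L^{\alpha+2}$ value, and weak lower semicontinuity of $\|\cdot\|_{L^2}$ and $\|(-\Delta)^{\frac{s}{2}}\cdot\|_{L^2}$ gives $J(Q_\alpha)\le\liminf J(u_n)$, so $Q_\alpha$ is a minimizer. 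Radial symmetry, monotonicity, and (replacing $u$ by $|u|$, which does not increase the seminorm) nonnegativity are inherited from the rearranged sequence; strict positivity and strict monotonicity then follow from the strong maximum principle for $(-\Delta)^s+1$ applied to the equation below.

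Computing the first variation of the constrained problem shows that any minimizer solves $(-\Delta)^s u+\lambda_1 u=\lambda_2|u|^\alpha u$, where pairing against $u$ and using \eqref{fractional-GN} forces $\lambda_1,\lambda_2>0$; the scaling $u\mapsto\beta Q_\alpha(\gamma\,\cdot)$ can be tuned to normalize $\lambda_1=\lambda_2=1$, yielding \eqref{ell-equ-fra}. For regularity and decay I would rewrite this as the fixed-point relation $Q_\alpha=((-\Delta)^s+1)^{-1}\!\big(Q_\alpha^{\alpha+1}\big)$ and bootstrap: the kernel $\mathcal{K}$ of $((-\Delta)^s+1)^{-1}$ is positive and integrable with $\mathcal{K}(x)\sim c\,|x|^{-(d+2s)}$ as $|x|\to\infty$, which upgrades $Q_\alpha$ from $H^s$ to $L^\infty$ and, iterating, to $C^\infty\cap H^{2s+1}$, while convolving the two-sided kernel bound against $Q_\alpha^{\alpha+1}$ yields the algebraic bounds \eqref{decay-1} and, after differentiating the kernel identity, \eqref{decay-2}. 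These are exactly the regularity and decay statements of \cite{FL,FLS}, whose kernel estimates I would invoke.

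The genuinely hard step, and the one I expect to be the main obstacle, is the final uniqueness-up-to-symmetries claim. For a complex minimizer $\varphi$ the modulus inequality $\big||\varphi(x)|-|\varphi(y)|\big|\le|\varphi(x)-\varphi(y)|$ combined with \eqref{equi-norm} shows $|\varphi|$ is again a minimizer and forces $\varphi=e^{i\theta}|\varphi|$, reducing matters to classifying real nonnegative ground states. Here the nonlocality blocks the classical ODE-shooting argument available at $s=1$, and I would follow Frank--Lenzmann--Silvestre \cite{FL,FLS}: pass to the Caffarelli--Silvestre extension so that $(-\Delta)^s$ becomes a local degenerate-elliptic operator, establish the \emph{nondegeneracy} of the linearized operator $L_+=(-\Delta)^s+1-(\alpha+1)Q_\alpha^\alpha$—namely $\ker L_+=\mathrm{span}\{\partial_1 Q_\alpha,\dots,\partial_d Q_\alpha\}$, the kernel generated by translations—via a Perron--Frobenius/oscillation analysis of the ground-state eigenspace, and then run a continuation argument in $s$ anchored at the uniquely solvable classical case to propagate uniqueness to all $s\in(0,1)$. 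Since this is precisely the content of \cite{FL,FLS} (and of \cite{DTWZ} for the exponent range considered), I would cite their nondegeneracy and uniqueness theorems rather than reprove them, checking only that the normalization \eqref{ell-equ-fra} and the range $0<\alpha<s^*$ satisfy their hypotheses.
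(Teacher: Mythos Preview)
The paper does not prove this theorem: it is stated as a cited result from \cite{DTWZ, FL, FLS}, with the single remark that \eqref{decay-2} follows from \eqref{decay-1} together with Lemma~C.2 of \cite{FLS}. Your proposal is therefore not a comparison target in the usual sense---you have sketched, accurately, the strategy that the cited papers carry out (rearrangement and radial compactness for attainment, Euler--Lagrange plus rescaling for \eqref{ell-equ-fra}, resolvent-kernel bootstrap for regularity and the decay \eqref{decay-1}--\eqref{decay-2}, and the Frank--Lenzmann--Silvestre nondegeneracy/continuation machinery for uniqueness). That outline is sound and matches what happens in \cite{FL, FLS}; just be aware that in the context of this paper the result is quoted rather than proved, so your write-up should either be framed as a sketch of the cited arguments or simply replaced by the citation, as the authors do.
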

Note that \eqref{decay-2} follows from \eqref{decay-1} and Lemma C.2 in \cite{FLS}.
\begin{remark} 
	We have the following Pohozaev's identities related to \eqref{ell-equ-fra}
	\begin{align} \label{pohozaev-identity-alpha}
	\|(-\Delta)^{\frac{s}{2}} Q_\alpha\|^2_{L^2} = \frac{d\alpha}{2s(\alpha+2)} \|Q_\alpha\|^{\alpha+2}_{L^{\alpha+2}} = \frac{d\alpha}{4s-(d-2s)\alpha} \|Q_\alpha\|^2_{L^2}.
	\end{align}
	In particular, 
	\begin{align*}
	C_{\text{opt}} &= \|Q_\alpha\|^{\alpha+2}_{L^{\alpha+2}} \div \left[ \|(-\Delta)^{\frac{s}{2}} Q_\alpha\|^{\frac{d\alpha}{2s}}_{L^2} \|Q_\alpha\|^{\alpha+2-\frac{d\alpha}{2s}}_{L^2} \right] \\
	&= \left(\frac{2s(\alpha+2)-d\alpha}{d\alpha} \right)^{\frac{d\alpha}{4s}} \frac{2s(\alpha+2)}{2s(\alpha+2)- d\alpha} \frac{1}{\|Q_\alpha\|^\alpha_{L^2}}.
	\end{align*}
	In the case $\alpha =\frac{4s}{d}$, we denote $Q:=Q_{\frac{4s}{d}}$ and have that
	\begin{align} \label{pohozaev-identity}
	\|(-\Delta)^{\frac{s}{2}} Q\|^2_{L^2} = \frac{d}{d+2s} \|Q\|^{\frac{4s}{d} +2}_{L^{\frac{4s}{d}+2}} = \frac{d}{2s} \|Q\|^2_{L^2}
	\end{align}
	and
	\[
	C_{\text{opt}} = \frac{d+2s}{d} \frac{1}{\|Q\|^{\frac{4s}{d}}_{L^2}}.
	\]
\end{remark}

\subsection{Radial compactness embedding}
Denote
\[
H^s_{\text{r}}(\R^d):= \{ u\in H^s(\R^d) \ : \ u \text{ is radially symmetric}\}.
\]
We have the following estimate for radially symmetric functions in $H^s(\R^d)$. 
\begin{lemma} [\cite{CO}] 
	Let $d\geq 2$ and $\frac{1}{2}<s<\frac{d}{2}$. Then every radially symmetric function $u \in H^s(\R^d)$ is almost everywhere equal to a function $U$, continuous for $x \ne 0$ such that
	\[
	|U(x)| \leq C |x|^{s-\frac{d}{2}} \|(-\Delta)^{\frac{s}{2}} u\|_{L^2}, 
	\]
	for all $x\ne 0$, where $C>0$ depends only on $d$ and $s$.
\end{lemma}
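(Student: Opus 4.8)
The plan is to establish the estimate first for radial Schwartz functions, where every manipulation is justified, and then to pass to general radial $u \in H^s(\R^d)$ by a density-and-linearity argument that simultaneously produces the continuous representative $U$ and the equality $U = u$ almost everywhere. The heart of the matter is a pointwise bound coming from the Bessel-function (Hankel) representation of the Fourier transform of a radial function, combined with a single weighted Cauchy--Schwarz inequality; the hypotheses $\tfrac12 < s < \tfrac d2$ will enter at exactly one point, namely the finiteness of one explicit Bessel integral. As a consistency check, note that the asserted inequality is invariant under the scaling $u \mapsto u(\lambda\,\cdot)$, since $\|(-\Delta)^{\frac s2} u(\lambda\,\cdot)\|_{L^2} = \lambda^{s-\frac d2}\|(-\Delta)^{\frac s2} u\|_{L^2}$, so the homogeneous weight $|x|^{s-\frac d2}$ is forced; in fact the computation below reproduces this power directly.

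For radial $u$ with $\mathcal F u(\xi) = g(|\xi|)$, the radial Fourier inversion formula gives, with $\nu := (d-2)/2 \ge 0$ (here the hypothesis $d\ge 2$ is used) and $J_\nu$ the Bessel function of the first kind of order $\nu$,
\[
u(x) = c_d\, |x|^{-\nu} \int_0^\infty g(\rho)\, J_\nu(|x|\rho)\, \rho^{d/2}\, d\rho .
\]
Writing $R = |x|$ and splitting $g(\rho)\rho^{d/2}J_\nu(R\rho) = \big[g(\rho)\rho^{(2s+d-1)/2}\big]\cdot\big[\rho^{(1-2s)/2}J_\nu(R\rho)\big]$, the Cauchy--Schwarz inequality yields
\[
|u(x)| \le c_d\, R^{-\nu}\Big(\int_0^\infty \rho^{2s+d-1}|g(\rho)|^2\, d\rho\Big)^{1/2}\Big(\int_0^\infty \rho^{1-2s}|J_\nu(R\rho)|^2\, d\rho\Big)^{1/2}.
\]
The first factor equals, up to the surface-measure constant $\omega_{d-1}^{-1/2}$, the seminorm $\|(-\Delta)^{\frac s2} u\|_{L^2}$, since in polar coordinates $\|(-\Delta)^{\frac s2} u\|_{L^2}^2 = \omega_{d-1}\int_0^\infty \rho^{2s+d-1}|g(\rho)|^2\, d\rho$. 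In the second factor the substitution $\tau = R\rho$ gives $\int_0^\infty \rho^{1-2s}|J_\nu(R\rho)|^2\, d\rho = R^{2s-2}\,I_\nu$, where $I_\nu := \int_0^\infty \tau^{1-2s}|J_\nu(\tau)|^2\, d\tau$. Collecting the powers of $R$, namely $-\nu + (s-1) = -(d-2)/2 + s - 1 = s - d/2$, produces exactly $|u(x)| \le C\,|x|^{s-\frac d2}\|(-\Delta)^{\frac s2} u\|_{L^2}$ with $C = c_d\,\omega_{d-1}^{-1/2}\,I_\nu^{1/2}$.

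The main obstacle --- and the only place the precise range of $s$ is needed --- is the finiteness of $I_\nu$, which I would verify from the standard asymptotics of $J_\nu$. Near the origin $J_\nu(\tau)\sim c\,\tau^\nu = c\,\tau^{(d-2)/2}$, so the integrand behaves like $\tau^{d-1-2s}$ and is integrable at $0$ precisely when $d - 1 - 2s > -1$, i.e. $s < d/2$; at infinity $|J_\nu(\tau)| \lesssim \tau^{-1/2}$, so the integrand is $O(\tau^{-2s})$ and is integrable at $\infty$ precisely when $2s > 1$, i.e. $s > 1/2$. Thus $I_\nu < \infty$ if and only if $\tfrac12 < s < \tfrac d2$, which is exactly the stated hypothesis. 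This sharp matching is the crux of the lemma.

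It remains to remove the Schwartz assumption and to obtain the continuous representative. Choose radial Schwartz functions $u_n \to u$ in $H^s(\R^d)$; such a sequence exists because radial Schwartz functions are dense in radial $H^s$ (the rotational average of a Schwartz approximant is again Schwartz and radial, and averaging is continuous on $H^s$). The representation above shows each $u_n$ is continuous on $\R^d \setminus \{0\}$, and, the estimate being linear in $u$, we have $|u_n(x) - u_m(x)| \le C|x|^{s-\frac d2}\|(-\Delta)^{\frac s2}(u_n - u_m)\|_{L^2}$. Hence $(u_n)$ is uniformly Cauchy on each set $\{|x| \ge \delta\}$ and converges to a function $U$ that is continuous on $\R^d\setminus\{0\}$ and satisfies the claimed bound with the same constant $C$. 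Finally, since $u_n \to u$ in $L^2(\R^d)$, a subsequence converges to $u$ almost everywhere, so $U = u$ a.e., completing the proof.
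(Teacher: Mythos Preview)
The paper does not give its own proof of this lemma: it is simply quoted from Cho--Ozawa \cite{CO}, so there is no in-paper argument to compare against. Your proof is correct and is in fact essentially the Cho--Ozawa argument: the Hankel (radial Fourier) representation together with one weighted Cauchy--Schwarz and the standard Bessel asymptotics to check that $I_\nu = \int_0^\infty \tau^{1-2s}|J_\nu(\tau)|^2\,d\tau < \infty$ exactly when $\tfrac12 < s < \tfrac d2$. The density-and-uniform-Cauchy passage to a continuous representative on $\R^d\setminus\{0\}$ is also carried out cleanly.
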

Using this radial estimate, we obtain the following radial compactness embedding for fractional Sobolev spaces.
\begin{lemma}
	Let $d\geq 2$ and $\frac{1}{2}<s<\frac{d}{2}$. The injection $H^s_{\emph{r}}(\R^d) \hookrightarrow L^q(\R^d)$ is compact for any $2<q<\frac{2d}{d-2s}$.
\end{lemma}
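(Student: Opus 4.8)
The statement to prove is that for $d \geq 2$ and $\tfrac12 < s < \tfrac d2$, the injection $H^s_{\mathrm r}(\R^d) \hookrightarrow L^q(\R^d)$ is compact for every $2 < q < \tfrac{2d}{d-2s}$. The strategy is the classical Strauss-type argument, now adapted to the fractional setting using the pointwise radial decay estimate just recorded.

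First I would take a sequence $(u_n) \subset H^s_{\mathrm r}(\R^d)$ that is bounded in $H^s$, say $\|u_n\|_{H^s} \leq M$, and pass to a weakly convergent subsequence $u_n \rightharpoonup u$ in $H^s(\R^d)$; by weak lower semicontinuity $u$ is also radial and $\|u\|_{H^s} \leq M$, and replacing $u_n$ by $u_n - u$ it suffices to show that if $u_n \rightharpoonup 0$ in $H^s_{\mathrm r}$ then $u_n \to 0$ strongly in $L^q$ for $2 < q < \tfrac{2d}{d-2s}$. The proof splits the $L^q$ norm into the region $|x| \leq R$ and the region $|x| > R$. On the ball $B(0,R)$, the local compactness of $H^s \hookrightarrow L^q$ for $q < 2^*$ (stated in the Sobolev embedding lemma above) gives $u_n \to 0$ in $L^q(B(0,R))$ for each fixed $R$. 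On the exterior region, I would use the radial decay estimate $|u_n(x)| \leq C|x|^{s - d/2}\|(-\Delta)^{s/2} u_n\|_{L^2} \leq CM |x|^{s-d/2}$ to control the large-$|x|$ tail: write
\[
\int_{|x|>R} |u_n|^q\, dx = \int_{|x|>R} |u_n|^{q-2} |u_n|^2\, dx \leq \bigl(CM\bigr)^{q-2} R^{(s - d/2)(q-2)} \int_{|x|>R} |u_n|^2\, dx \leq (CM)^{q-2} R^{(s-d/2)(q-2)} M^2 .
\]
Since $s - d/2 < 0$ and $q > 2$, the exponent $(s-d/2)(q-2)$ is negative, so this tail bound tends to $0$ as $R \to \infty$, uniformly in $n$.

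Combining the two pieces, given $\varep > 0$ I would first fix $R$ large enough that the exterior contribution is $< \varep$ for all $n$, then use the local compactness to make $\int_{|x|\leq R}|u_n|^q < \varep$ for $n$ large, yielding $\|u_n\|_{L^q}^q < 2\varep$ eventually; hence $u_n \to 0$ in $L^q$. The only point requiring a little care is the endpoint exclusions: the restriction $q > 2$ is precisely what makes the tail exponent negative (so $q = 2$ genuinely fails, consistent with the fact that $L^2$-compactness is false), and $q < \tfrac{2d}{d-2s} = 2^*$ is exactly the range where the local compact embedding of the previous lemma applies. I expect the main (minor) obstacle to be bookkeeping the hypotheses: one needs $d \geq 2$ and $s > \tfrac12$ for the radial pointwise estimate to be available, and $s < \tfrac d2$ so that $2^* = \tfrac{2d}{d-2s}$ is finite and the decay exponent $s - d/2$ is strictly negative — all of which are in the hypotheses. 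No deeper input is needed beyond the two lemmas already stated.
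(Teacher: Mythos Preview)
Your argument is correct and is the standard Strauss-type derivation. Note that the paper does not actually give a proof of this particular lemma: it simply says ``Using this radial estimate, we obtain the following radial compactness embedding'' and states the result. Your proposal supplies exactly the argument that sentence points to, and every step checks out --- in particular, the splitting $|u_n|^q = |u_n|^{q-2}|u_n|^2$ is the right move here, since the crude bound $|u_n(x)|^q \lesssim |x|^{(s-d/2)q}$ alone is \emph{not} integrable at infinity when $q<2^*$.

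One cosmetic remark: the claim that the weak limit $u$ is radial is not a matter of ``weak lower semicontinuity'' but rather that $H^s_{\mathrm r}(\R^d)$ is a closed linear subspace of $H^s(\R^d)$ and hence weakly closed. This does not affect the argument.
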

If we assume that $u \in H^s_{\text{r}}(\R^d)$ is radially decreasing, i.e. $u(x) \leq u(y)$ if $|x| \geq |y|$, then we have  the following compact embedding.
\begin{lemma} 
	Let $d\geq 1$ and $s\in (0,1)$. The injection $H^s_{\emph{rd}}(\R^d) \hookrightarrow L^q(\R^d)$ is compact for any $2<q<2^*$, where
	\[
	H^s_{\emph{rd}}(\R^d):= \left\{ u \in H^s_{\emph{r}}(\R^d) \ : \ u \text{ is radially decreasing}\right\}.
	\]
\end{lemma}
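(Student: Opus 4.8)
The plan is to use the elementary pointwise decay estimate available for radially nonincreasing $L^2$ functions — which, unlike the radial Strauss-type estimate recalled above, requires no control on derivatives and is therefore valid for every $d\geq 1$ and every $s\in(0,1)$ — combine it with the \emph{local} compactness of the embedding $H^s(\R^d)\hookrightarrow L^q(\R^d)$ for $1\leq q<2^*$ recalled in the Sobolev embedding lemma, and conclude by a standard ``tails plus diagonal'' argument.

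\textbf{Step 1 (radial decay).} Let $u\in H^s_{\text{rd}}(\R^d)$ and let $f$ be its radial profile, $u(x)=f(|x|)$, so that $f$ is nonincreasing. If $f(r_0)<0$ for some $r_0>0$, then $f(r)^2\geq f(r_0)^2$ for all $r\geq r_0$, contradicting $\int_0^\infty f(r)^2 r^{d-1}\,dr<\infty$; hence $u\geq 0$. For $x\neq 0$, monotonicity gives $u(y)\geq u(x)\geq 0$ for all $|y|\leq |x|$, so
\[
\|u\|_{L^2}^2 \;\geq\; \int_{B(0,|x|)} |u(y)|^2\,dy \;\geq\; \omega_d\, |x|^d\, u(x)^2, \qquad \omega_d:=|B(0,1)|,
\]
and therefore $u(x)\leq \omega_d^{-1/2}\,\|u\|_{L^2}\,|x|^{-d/2}$ for a.e. $x\neq 0$ (we work throughout with the nonincreasing representative).

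\textbf{Step 2 (uniform tail estimate).} Let $(u_n)\subset H^s_{\text{rd}}(\R^d)$ with $\|u_n\|_{H^s}\leq M$, and let $2<q<2^*$. Writing $|u_n|^q=|u_n|^2|u_n|^{q-2}$ and using Step 1 on $\{|x|>R\}$,
\[
\int_{|x|>R}|u_n(x)|^q\,dx \;\leq\; \big(\omega_d^{-1/2} M R^{-d/2}\big)^{q-2}\int_{|x|>R}|u_n(x)|^2\,dx \;\leq\; \omega_d^{-\frac{q-2}{2}}\,M^{q}\,R^{-\frac{d(q-2)}{2}}.
\]
Since $q>2$, the right-hand side tends to $0$ as $R\to\infty$, uniformly in $n$.

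\textbf{Step 3 (local compactness and conclusion).} For each $k\in\N$, the local compactness of $H^s(\R^d)\hookrightarrow L^q(\R^d)$ yields a subsequence of $(u_n)$ converging in $L^q(B(0,k))$; a diagonal extraction produces a single subsequence, still denoted $(u_n)$, and a function $u\in L^q_{\text{loc}}(\R^d)$ with $u_n\to u$ in $L^q(B(0,k))$ for every $k$. In particular $u_n\to u$ in $L^q$ on every annulus $\{R<|x|<R'\}$; letting $R'\to\infty$ and invoking Step 2 gives $\int_{|x|>R}|u|^q\,dx\leq \omega_d^{-(q-2)/2}M^q R^{-d(q-2)/2}$. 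Now, given $\varep>0$, fix $R$ so that both $\int_{|x|>R}|u_n|^q\,dx<\varep$ for all $n$ and $\int_{|x|>R}|u|^q\,dx<\varep$, then choose $n$ large so that $\|u_n-u\|_{L^q(B(0,R))}<\varep$; it follows that $\|u_n-u\|_{L^q(\R^d)}\to 0$, which is the claimed compactness. The argument is short, and the only points deserving care — hence the (mild) main obstacle — are the reduction to the nonincreasing representative of each $u_n$, so that the pointwise bound of Step 1 is legitimate, and the passage to the limit in the tail estimate of Step 3; both are handled as above with no extra input.
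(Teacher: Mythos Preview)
Your proof is correct and follows essentially the same route as the paper: both arguments rest on the elementary pointwise bound $|u(x)|\lesssim \|u\|_{L^2}\,|x|^{-d/2}$ for radially nonincreasing $L^2$ functions, combined with the local compactness of $H^s\hookrightarrow L^q$. The paper packages this slightly differently---reducing first to the case $u_n\rightharpoonup 0$ and then invoking dominated convergence on $\{|x|>1\}$ with the integrable majorant $C|x|^{-dq/2}$---whereas you carry out the full diagonal extraction and explicit tail-splitting; but the substance is identical, and your added remark that radially nonincreasing $L^2$ functions are necessarily nonnegative is a welcome clarification.
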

\begin{proof}
	It is enough to show that if $(u_n)_{n\geq 1} \subset H^s_{\text{rd}}$ is such that $u_n \rightharpoonup 0$ weakly in $H^s(\R^d)$, then $u_n \rightarrow 0$ strongly in $L^q(\R^d)$. We can assume that $u_n \rightarrow 0$ strongly in $L^q_{\text{loc}}(\R^d)$ and $u_n \rightarrow 0$ almost everywhere. It follows that
	\[
	\|u_n\|^q_{L^q} = \|u_n\|^q_{L^q(B)} + \|u_n\|^q_{L^q(B^c)} = \|u_n\|^q_{L^q(B^c)} + o_n(1),
	\]
	where $B$ is the unit ball in $\R^d$ and $B^c$ is its complement in $\R^d$. Here $o_n(1)$ means that $o_n(1) \rightarrow 0$ as $n\rightarrow \infty$. It remains to estimate the term on $B^c$. To do this, we use the fact that $u$ is radially symmetric and radially decreasing to have that
	\[
	\|u\|^2_{L^2} = \int |u(x)|^2 dx \geq |\mathbb{S}^{d-1}| \int_0^r |u(\tau)|^2 \tau^{d-1} d\tau \geq |\mathbb{S}^{d-1}| |u(r)|^2\frac{r^d}{d}.
	\]
	It follows that
	\[
	|u(x)| \leq \left(\frac{d}{|\mathbb{S}^{d-1}|} \right)^{\frac{1}{2}} |x|^{-\frac{d}{2}}\|u\|_{L^2}
	\]
	for all $x \ne 0$. Using this inequality and the fact $(u_n)_{n \geq 1}$ is bounded in $H^s(\R^d)$, we infer that there exists $C(d)>0$ independent of $n$ such that
	\[
	|u_n(x)|^q \leq C(d) |x|^{-\frac{dq}{2}}.
	\]
	Note that the last term is integrable on $B^c$ since $q>2$. It follows from the dominated convergence theorem that $\|u_n\|^q_{L^q(B^c)} \rightarrow 0$ as $n\rightarrow \infty$. The proof is complete.
\end{proof}

We next recall the symmetric rearrangement which is useful in variational calculus. The symmetric rearrangement of a measurable function $u: \R^d \rightarrow \C$  vanishing at infinity is defined by
\[
u^\ast (x) = \int_0^\infty \chi_{\{|u|>t\}^\ast}(x) dt,
\]  
where $\{|u|>t\}^\ast$ is a ball centered at the origin whose volume equals to the volume of $\{x \in \R^d \ : \ |u(x)|>t\}$ and $\chi_B$ is the characteristic function of $B$. We see that $u^\ast$ is non-negative, radially symmetric and radially decreasing. We recall some basic properties of the symmetric rearrangement:
\begin{itemize}
	\item $L^q$-norm preserving: $\|u\|_{L^q} = \|u^\ast\|_{L^q}$ for any $1\leq q \leq \infty$.
	\item Order-preserving: if $u(x) \leq v(x)$ for all $x\in \R^d$, then $u^\ast(x) \leq v^\ast(x)$ for all $x\in \R^d$.
	\item Hardy-Littlewood inequality: if $u,v$ are non-negative measurable functions that vanish at infinity, then
	\[
	\int_{\R^d} u(x) v(x) dx \leq \int_{\R^d} u^\ast(x) v^\ast(x) dx,
	\]
	in the sense that the left hand side is finite whenever the right hand side is finite.
	\item Decreasing $L^q$-distance: $\|u-v\|_{L^q} \geq \|u^\ast - v^\ast\|_{L^q}$ for any $1\leq q \leq \infty$.
	\item Polya-Szeg\"o inequality: $\|\nabla u^\ast\|_{L^q} \leq \|\nabla u\|_{L^q}$ for any $1\leq q\leq \infty$. In particular, 
	\[
	\|\nabla |u|^\ast\|_{L^q} \leq \|\nabla |u|\|_{L^q} \leq \|\nabla u\|_{L^q}, \quad 1 \leq q\leq \infty.
	\]
	The last inequality follows from the diamagnetic inequality.
	\item Fractional Polya-Szeg\"o inequality \cite{Park}: for $0<s<1$, $\|(-\Delta)^{\frac{s}{2}} u^\ast\|_{L^2} \leq \|(-\Delta)^{\frac{s}{2}} u\|_{L^2}$. The equality holds for radially decreasing function. In particular,
	\[
	\|(-\Delta)^{\frac{s}{2}} |u|^\ast\|_{L^2} \leq \|(-\Delta)^{\frac{s}{2}} |u|\|_{L^2} \leq \|(-\Delta)^{\frac{s}{2}} u\|_{L^2}.
	\]
	Here the last inequality follows from \eqref{equi-norm} and the fact $||u(x)| - |u(y)|| \leq |u(x) - u(y)|$.
\end{itemize}

\subsection{Concentration-compactness principle}

\begin{lemma}[Concentration-compactness lemma] \label{lem-concen-compact}
Let $d\geq 1$ and $s \in (0,1)$. Let $(u_n)_{n\geq 1}$ be a bounded sequence in $H^s(\R^d)$ satisfying 
\begin{align} \label{limit-un}
\lim_{n\rightarrow \infty} \|u_n\|^2_{L^2} =a
\end{align}
for some fixed constant $a>0$. Then there exists a subsequence $(u_{n_k})_{k\geq 1}$ satisfying one of the following three possibilities.
\begin{itemize}
\item {\bf Vanishing:} 
\[
\lim_{k\rightarrow \infty} \sup_{y\in \R^d} \int_{B(y,R)} |u_{n_k}(x)|^2 dx =0
\]
for all $R>0$.
\item {\bf Compactness:} There exists a sequence $(y_k)_{k\geq 1} \subset \R^d$ such that for all $\varep>0$, there exist $R(\varep)>0$ and $k(\varep) \geq 1$ such that for all $k\geq k(\varep)$,
\[
\int_{B(y_k,R(\varep))} |u_{n_k}(x)|^2 dx \geq a-\varep.
\]
\item {\bf Dichotomy:} There exist $\mu \in (0,a)$ and sequences $(u^1_k)_{k\geq 1}, (u^2_k)_{k\geq 1}$ bounded in $H^s(\R^d)$ such that
\begin{align} \label{dichotomy-possibility}
\left\{
\renewcommand{\arraystretch}{1.2}
\begin{array}{l} 
\|u_{n_k} - u^1_k - u^2_k\|_{L^q} \rightarrow 0 \text{ as } k\rightarrow \infty \text{ for all } 2\leq q<2^*, \\
\|u^1_k\|^2_{L^2} \rightarrow \mu, \quad \|u^2_k\|_{L^2} \rightarrow a-\mu  \text{ as } k\rightarrow \infty,\\
\emph{dist}(\emph{supp}(u^1_k), \emph{supp}(u^2_k)) \rightarrow \infty \text{ as } k \rightarrow \infty,\\
\liminf_{k\rightarrow \infty} \|(-\Delta)^{\frac{s}{2}} u_{n_k}\|^2_{L^2} - \|(-\Delta)^{\frac{s}{2}} u^1_{k}\|^2_{L^2} -\|(-\Delta)^{\frac{s}{2}} u^2_{k}\|^2_{L^2} \geq 0.
\end{array}
\right.
\end{align}
\end{itemize}
\end{lemma}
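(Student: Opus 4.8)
The plan is to run Lions' concentration--compactness scheme \cite{Lions1, Lions2} adapted to the nonlocal operator $(-\Delta)^s$, the only genuinely delicate point being the splitting of the Sobolev seminorm in the dichotomy alternative. First I would introduce the Lévy concentration functions $Q_n(t):=\sup_{y\in\R^d}\int_{B(y,t)}|u_n(x)|^2\,dx$, which are nondecreasing in $t$ and uniformly bounded by $M:=\sup_n\|u_n\|_{L^2}^2<\infty$. By Helly's selection theorem there are a subsequence, still denoted $(u_{n_k})$, and a nondecreasing $Q:[0,\infty)\to[0,M]$ with $Q_{n_k}(t)\to Q(t)$ for every $t$; since $Q_{n_k}(t)\le\|u_{n_k}\|_{L^2}^2\to a$, the number $\mu:=\lim_{t\to\infty}Q(t)$ lies in $[0,a]$, and the cases $\mu=0$, $\mu=a$, $0<\mu<a$ will produce vanishing, compactness, dichotomy respectively. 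Vanishing is immediate since $\mu=0$ gives $\sup_y\int_{B(y,R)}|u_{n_k}|^2=Q_{n_k}(R)\to Q(R)\le0$ for each $R$. For compactness ($\mu=a$) I would use the overlapping-balls argument: fix $\varepsilon_0\in(0,a/2)$, $R_0$ with $Q(R_0)>a-\varepsilon_0$ and $y_k$ with $\int_{B(y_k,R_0)}|u_{n_k}|^2>a-\varepsilon_0$; for arbitrary $\varepsilon\in(0,a/2)$, a competing ball $B(\tilde y_k,R(\varepsilon))$ capturing $a-\varepsilon$ cannot be disjoint from $B(y_k,R_0)$ (otherwise $a\ge(a-\varepsilon_0)+(a-\varepsilon)>a$ in the limit), hence the mass $a-\varepsilon$ already sits in $B(y_k,R_0+2R(\varepsilon))$; relabelling $R(\varepsilon)$ yields compactness with the single sequence $(y_k)$.

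The dichotomy case $0<\mu<a$ is the substance. Fix $\varepsilon>0$; choose $R_0$ with $Q(R_0)>\mu-\varepsilon$ and (after translating) a subsequence with $\int_{B(0,R_0)}|u_{n_k}|^2>\mu-\varepsilon$; since $\int_{B(0,t)}|u_{n_k}|^2\le Q_{n_k}(t)\to Q(t)\le\mu$ for each fixed $t$, a diagonal extraction gives $R_k\to\infty$ with $\int_{B(0,R_k)}|u_{n_k}|^2\le\mu+\varepsilon$, so the collar $\{R_0\le|x|\le R_k\}$ carries $L^2$-mass $\le2\varepsilon$. The new ingredient, forced by nonlocality, is a second selection inside this collar: writing the Gagliardo energy over the dyadic shells $S_j:=\{2^j\le|x|<2^{j+1}\}$ and using \eqref{equi-norm} one has $\sum_j\int_{x\in S_j}\int_{\R^d}\frac{|u_{n_k}(x)-u_{n_k}(y)|^2}{|x-y|^{d+2s}}\,dy\,dx=\frac{2}{C(d,s)}\|(-\Delta)^{\frac{s}{2}}u_{n_k}\|_{L^2}^2\le M'$, so a pigeonhole over the $\asymp\log_2(R_k/R_0)\to\infty$ shells contained in the collar produces indices $j_k\to\infty$ with $S_{j_k}\subset\{R_0\le|x|\le R_k\}$ and $\int_{x\in S_{j_k}}\int\frac{|u_{n_k}(x)-u_{n_k}(y)|^2}{|x-y|^{d+2s}}\,dy\,dx\le\eta(\varepsilon)$, where $\eta(\varepsilon)\to0$ as $\varepsilon\to0$. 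I then take smooth radial cutoffs $\psi_k^1\equiv1$ on $B(0,2^{j_k})$ with $\mathrm{supp}\,\psi_k^1\subset B(0,\tfrac54 2^{j_k})$ and $\psi_k^2\equiv1$ off $B(0,2^{j_k+1})$ with $\mathrm{supp}\,\psi_k^2\subset\R^d\setminus B(0,\tfrac74 2^{j_k})$, $0\le\psi_k^i\le1$, $\|\nabla\psi_k^i\|_{L^\infty}\lesssim2^{-j_k}\to0$, and set $u_k^1:=\psi_k^1u_{n_k}$, $u_k^2:=\psi_k^2u_{n_k}$, $w_k:=u_{n_k}-u_k^1-u_k^2$. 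By construction $\mathrm{supp}\,u_k^1$ and $\mathrm{supp}\,u_k^2$ are separated by $d_k\gtrsim2^{j_k}\to\infty$, $\mathrm{supp}\,w_k\subset S_{j_k}$, $\|w_k\|_{L^2}^2\le\int_{S_{j_k}}|u_{n_k}|^2\le2\varepsilon$, and the collar mass bounds give $\mu-\varepsilon<\|u_k^1\|_{L^2}^2\le\mu+\varepsilon$ and $a-\mu-\varepsilon-o_k(1)\le\|u_k^2\|_{L^2}^2\le a-\mu+\varepsilon+o_k(1)$.

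The routine half of the verification: boundedness of $u_k^1,u_k^2,w_k$ in $H^s$ and $\|w_k\|_{L^q}\to0$ for $2\le q<2^*$ follow from the mass bounds and the fractional Gagliardo--Nirenberg inequality \eqref{fractional-GN}, once one controls the seminorms; writing, via \eqref{equi-norm}, the algebraic identity $\zeta(x)u(x)-\zeta(y)u(y)=\zeta(x)\bigl(u(x)-u(y)\bigr)+u(y)\bigl(\zeta(x)-\zeta(y)\bigr)$ with $\zeta\in\{\psi_k^1,\psi_k^2,1-\psi_k^1-\psi_k^2\}$ supported in or around $S_{j_k}$ with $\|\nabla\zeta\|_{L^\infty}\lesssim2^{-j_k}$ gives $\|(-\Delta)^{\frac{s}{2}}w_k\|_{L^2}^2\lesssim\eta(\varepsilon)+\varepsilon+o_k(1)$ and uniform $H^s$-bounds. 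The delicate half is the last of the conditions in \eqref{dichotomy-possibility}: expanding by \eqref{equi-norm},
\[
\|(-\Delta)^{\frac{s}{2}}u_{n_k}\|_{L^2}^2-\|(-\Delta)^{\frac{s}{2}}u_k^1\|_{L^2}^2-\|(-\Delta)^{\frac{s}{2}}u_k^2\|_{L^2}^2=\|(-\Delta)^{\frac{s}{2}}w_k\|_{L^2}^2+2\,\mathrm{Re}\bigl(I_k+II_k+III_k\bigr),
\]
with $I_k$ the $(u_k^1,u_k^2)$-pairing and $II_k,III_k$ the $w_k$-pairings. Since $\mathrm{supp}\,u_k^1\cap\mathrm{supp}\,u_k^2=\emptyset$, polarization of \eqref{equi-norm} collapses $I_k$ to a constant times $-\iint_{x\in\mathrm{supp}\,u_k^1,\,y\in\mathrm{supp}\,u_k^2}\frac{u_k^1(x)\overline{u_k^2(y)}}{|x-y|^{d+2s}}$, on which set $|x-y|\gtrsim\max(d_k,|y|)$; estimating $|x-y|^{-(d+2s)}\le d_k^{-2s}|x-y|^{-d}$ and applying Cauchy--Schwarz (the ball supporting $u_k^1$ contributing $|B(0,2^{j_k})|^{1/2}$, the tail of $u_k^2$ contributing $(2^{j_k})^{-d/2}$) yields $|\mathrm{Re}\,I_k|\lesssim M\,2^{-2sj_k}\to0$, while $|II_k|+|III_k|\le\bigl(\|(-\Delta)^{\frac{s}{2}}u_k^1\|_{L^2}+\|(-\Delta)^{\frac{s}{2}}u_k^2\|_{L^2}\bigr)\|(-\Delta)^{\frac{s}{2}}w_k\|_{L^2}\lesssim\sqrt{\eta(\varepsilon)+\varepsilon+o_k(1)}$. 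As $\|(-\Delta)^{\frac{s}{2}}w_k\|_{L^2}^2\ge0$, this gives $\liminf_{k\to\infty}\bigl(\|(-\Delta)^{\frac{s}{2}}u_{n_k}\|_{L^2}^2-\|(-\Delta)^{\frac{s}{2}}u_k^1\|_{L^2}^2-\|(-\Delta)^{\frac{s}{2}}u_k^2\|_{L^2}^2\bigr)\ge-C\sqrt{\eta(\varepsilon)+\varepsilon}$, and a standard diagonal argument running the construction for $\varepsilon=\varepsilon_m\downarrow0$ upgrades the mass estimates to $\|u_k^1\|_{L^2}^2\to\mu$, $\|u_k^2\|_{L^2}^2\to a-\mu$, forces $\|w_k\|_{L^q}\to0$, keeps $\mathrm{dist}(\mathrm{supp}\,u_k^1,\mathrm{supp}\,u_k^2)\to\infty$, and turns the last estimate into $\liminf\ge0$.

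I expect the main obstacle to be exactly this nonlocal seminorm inequality: unlike the Dirichlet energy for $s=1$, $\|(-\Delta)^{\frac{s}{2}}\cdot\|_{L^2}^2$ does not localize under multiplication by a cutoff, so one cannot simply cut $u_{n_k}$ in two and add the energies. What makes it work is the pigeonhole choice of a dyadic shell carrying little Gagliardo energy (this is what keeps $\|(-\Delta)^{\frac{s}{2}}w_k\|_{L^2}$ small, hence the $w_k$-cross terms negligible) together with the decay of the kernel $|x-y|^{-(d+2s)}$ over the separating distance $d_k\to\infty$ (this kills the interaction between the two bulk pieces); securing both simultaneously inside a collar of small $L^2$-mass is the heart of the matter.
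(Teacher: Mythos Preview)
Your outline is correct and follows the same Lions scheme as the paper (L\'evy concentration functions, Helly's selection, trichotomy on $\mu$), with the vanishing and compactness cases handled identically. The dichotomy case, however, is treated by a genuinely different device. The paper avoids your pigeonhole-on-dyadic-shells and the explicit $u_k^1$--$u_k^2$ cross-term computation entirely: it proves a single commutator-type localization estimate,
\[
\|(-\Delta)^{s/2}(\varphi_R v)\|_{L^2}^2 - \tfrac{1}{2}C(d,s)\iint \frac{[\varphi_R(x)]^2\,|v(x)-v(y)|^2}{|x-y|^{d+2s}}\,dx\,dy \le C R^{-s}\|v\|_{H^s}^2,
\]
obtained from the algebraic identity $\varphi_R(x)v(x)-\varphi_R(y)v(y)=\varphi_R(x)(v(x)-v(y))+(\varphi_R(x)-\varphi_R(y))v(y)$ together with $|\nabla\varphi_R|\lesssim R^{-1}$. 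Applying it to both $\varphi_{R_1}$ and $\psi_{R_k}$ and invoking the pointwise bound $\varphi_{R_1}^2+\psi_{R_k}^2\le 1$ gives immediately
\[
\|(-\Delta)^{s/2}u_k^1\|_{L^2}^2+\|(-\Delta)^{s/2}u_k^2\|_{L^2}^2\le \|(-\Delta)^{s/2}u_{n_k}\|_{L^2}^2+O(R_1^{-s})+O(R_k^{-s}),
\]
with no need to locate a low-energy shell or bound the long-range interaction by hand. Your route also works and is conceptually transparent (the pigeonhole makes explicit why one can cut at negligible Gagliardo cost), but it is longer and carries more bookkeeping; note in particular that your $\eta(\varepsilon)$ is really an $o_k(1)$ for each fixed $\varepsilon$ (the number of available shells is $\asymp\log_2(R_k/R_0)\to\infty$ as $k\to\infty$), which your final diagonal extraction absorbs. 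The paper's localization estimate is shorter and is a reusable tool (it reappears, for instance, in the analysis of the truncated ground state $Q_\tau$).
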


\begin{proof} The proof of this result follows by the same argument as in \cite{Lions1, Lions2}. For the reader's convenience, we give some details. Define the so-called L\'evy concentration functions $M_n: [0,\infty) \rightarrow [0,a]$ by
\[
M_n(R):= \sup_{y\in \R^d} \int_{B(y,R)} |u_n(x)|^2 dx.
\]
Since $\|u_n\|^2_{L^2} =a$ for all $n\geq 1$, we see that $(M_n)_{n\geq 1}$ is a uniformly bounded sequence of non-decreasing, non-negative functions on $[0,\infty)$. By Helly's collection theorem, there exist a subsequence $(M_{n_k})_{k\geq 1}$ and a non-negative, non-decreasing function $M: [0,\infty) \rightarrow [0,a]$ such that $\lim_{k\rightarrow \infty} M_{n_k}(R) = M(R)$ for any $R\geq 0$. Set
\[
\mu := \lim_{R\rightarrow \infty} M(R) = \lim_{R\rightarrow \infty} \lim_{k\rightarrow \infty} M_{n_k}(R).
\]
Obviously, $\mu \in [0,a]$. We will consider three possibilities: $\mu=0$, $\mu =a$ and $\mu \in (0,a)$.

If $\mu=0$, then since $M$ is a non-negative, non-decreasing function it follows that $M(R) =0$ for all $R\geq 0$, equivalently, $\lim_{k\rightarrow \infty} M_{n_k}(R) =0$ for all $R\geq 0$.

If $\mu=a$, we proceed as follows. We first find $R_0>0$ large enough such that $M(R_0) > \frac{a}{2}$. For any $k\geq 1$, there exists $y_k \in \R^d$ such that
\[
M_{n_k}(R_0) \leq \int_{B(y_k, R_0)} |u_{n_k}(x)|^2 dx +\frac{1}{k}.
\]
Now, for $0<\varep <\frac{a}{2}$, we fix $\tau(\varep)>0$ such that $M(\tau(\varep)) >a-\varep>\frac{a}{2}$ and let $x_k \in \R^d$ satisfying
\[
M_{n_k}(\tau(\varep)) \leq \int_{B(x_k, \tau(\varep))} |u_{n_k}(x)|^2 dx +\frac{1}{k}.
\]
We thus have that
\begin{align*}
\int_{B(y_k,R_0)} |u_{n_k}(x)|^2 dx + \int_{B(x_k, R(\varep))} |u_{n_k}(x)|^2 dx &\geq M_{n_k}(R_0) + M_{n_k}(\tau(\varep)) +o_k(1) \\
&\geq M(R_0) + M(\tau(\varep)) + o_k(1),
\end{align*}
where $o_k(1)$  means that $o_k(1) \rightarrow 0$ as $k\rightarrow \infty$. Since $M(R_0) + M(\tau(\varep)) >a$, we infer that there exists $k(\varep)$ large enough such that the right hand side is strictly larger than $a$ for all $k\geq k(\varep)$. This implies that for $k\geq k(\varep)$,
\[
B(y_k, R_0) \cap B(x_k, \tau(\varep)) \ne \emptyset,
\]
that is, $B(x_k, \tau(\varep)) \subset B(y_k, R_0(\varep))$ with $R_0(\varep) = R_0 + 2\tau(\varep)$. Hence, by enlarging $k(\varep)$ if necessary,
\begin{align} \label{estimate-limit}
\int_{B(y_k, R_0(\varep))} |u_{n_k}(x)|^2 dx \geq \int_{B(x_k, \tau(\varep))} |u_{n_k}(x)|^2 dx \geq a-2\varep
\end{align}
for all $k\geq k(\varep)$ which shows the compactness with $R(\varep)=R_0(\varep)$.

We now consider the case $\mu \in (0,a)$. Take $\varphi, \psi \in C^\infty_0(\R^d)$ satisfying $0\leq \varphi, \psi \leq 1$ and
\[
\varphi(x) = \left\{
\begin{array}{cl}
1 &\text{if } |x|\leq 1, \\
0 &\text{if } |x| \geq 2,
\end{array}
\right.
\quad \text{and} \quad 
\psi(x) = \left\{
\begin{array}{cl}
0 &\text{if } |x|\leq 1, \\
1 &\text{if } |x| \geq 2.
\end{array}
\right.
\]
Denote for $R>0$ the functions $\varphi_R(x)= \varphi(x/R)$ and $\psi_R(x)= \psi(x/R)$. 
We first note that for $v \in H^s(\R^d)$,
\begin{align*}
2[C(d,s)]^{-1}\|(-\Delta)^{\frac{s}{2}}(\varphi_R v)\|^2_{L^2} &= \iint \frac{|\varphi_R(x) v(x) - \varphi_R(y) v(y)|^2}{|x-y|^{d+2s}} dx dy \\
&= \iint \frac{[\varphi_R(x)]^2 |v(x) - v(y)|^2}{|x-y|^{d+2s}} dx dy + \iint \frac{|\varphi_R(x) - \varphi_R(y)|^2|v(y)|^2}{|x-y|^{d+2s}} dx dy \\
&\mathrel{\phantom{=}} + 2 \mathfrak{Re}\iint \frac{\varphi_R(x) (v(x)-v(y)) (\varphi_R(x)-\varphi_R(y)) \overline{v}(y)}{|x-y|^{d+2s}} dx dy \\
&= \iint \frac{[\varphi_R(x)]^2 |v(x) - v(y)|^2}{|x-y|^{d+2s}} dx dy + \text{I} + \text{II},
\end{align*}
where $C(d,s)$ is given in \eqref{defi-C-ds}. By H\"older's inequality,
\begin{align*}
|\text{II}| &\leq 2 \left(\iint \frac{[\varphi_R(x)]^2|v(x)-v(y)|^2}{|x-y|^{d+2s}} dx dy \right)^{\frac{1}{2}} \left(\iint \frac{|\varphi_R(x) - \varphi_R(y)|^2|v(y)|^2}{|x-y|^{d+2s}} dx dy \right)^{\frac{1}{2}} \\
&\leq 2 \|(-\Delta)^{\frac{s}{2}} v\|_{L^2} \left(\iint \frac{|\varphi_R(x) - \varphi_R(y)|^2|v(y)|^2}{|x-y|^{d+2s}} dx dy \right)^{\frac{1}{2}},
\end{align*}
where we have used that $0\leq \varphi_R \leq 1$. We thus only consider $\text{I}$. Using the fact $|\nabla\varphi_R| \lesssim R^{-1}$, we have that
\begin{align*}
\text{I} &= \iint_{|x-y| \leq R} \frac{|\varphi_R(x) - \varphi_R(y)|^2|v(y)|^2}{|x-y|^{d+2s}} dx dy  + \iint_{|x-y| \geq R} \frac{|\varphi_R(x) - \varphi_R(y)|^2|v(y)|^2}{|x-y|^{d+2s}} dx dy \\
&\lesssim R^{-2}\iint_{|x-y|\leq R} \frac{|v(y)|^2}{|x-y|^{d+2s-2}} dx dy + \iint_{|x-y|\geq R} \frac{|v(y)|^2}{|x-y|^{d+2s}} dx dy \\
&= R^{-2} \int |v(y)|^2 dy \int_{|\zeta| \leq R} |\zeta|^{-(d+2s-2)} d\zeta + \int |v(y)|^2 dx \int_{|\zeta| \geq R} |\zeta|^{-(d+2s)} d\zeta \\
&= C R^{-2s} \|v\|^2_{L^2}.
\end{align*}
We thus prove that
\begin{align} \label{product-est}
\|(-\Delta)^{\frac{s}{2}} (\varphi_R v)\|^2_{L^2} - \frac{1}{2} C(d,s)\iint \frac{[\varphi_R(x)]^2 |v(x) - v(y)|^2}{|x-y|^{d+2s}} dx dy \leq C R^{-s} \|v\|^2_{H^s}.
\end{align}
In particular, since $(u_n)_{n\geq 1}$ is a bounded sequence in $H^s(\R^d)$, we see that
\begin{align} \label{product-est-app}
\|(-\Delta)^{\frac{s}{2}}(\varphi_R u_n)\|^2_{L^2} - \frac{1}{2} C(d,s)\iint \frac{[\varphi_R(x)]^2 |u_n(x) - u_n(y)|^2}{|x-y|^{d+2s}} dx dy \leq CM^2 R^{-s},
\end{align}
where $M:= \sup_{n\geq 1} \|u_n\|_{H^s}$. A similar estimate holds for $\psi_R$ in place of $\varphi_R$. 

Now let $\varep>0$. We choose $R_0>0$ large enough such that $M(R) \geq \mu - \frac{\varep}{4}$ for all $R \geq R_0$. We next choose $R_1>0$ large enough so that $CM^2 R_1^{-s} \leq \varep$. Of course, we may assume $R_1 \geq R_0$ and thus $M(R_1)\geq \mu -\frac{\varep}{4}$. Since $\lim_{k\rightarrow \infty} M_{n_k}(R_1) = M(R_1)$, there exists $k_1 \geq 1$ large enough such that for $k\geq k_1$,
\[
\mu-\frac{\varep}{2} < M_{n_k}(R_1) \leq M_{n_k}(2R_1) < \mu +\varep.
\]
On the other hand, for each $k\geq 1$, there exists $y_k \in \R^d$ such that
\[
M_{n_k}(R_1) \leq \int_{B(y_k,R_1)} |u_{n_k}(x)|^2 dx +\frac{1}{k}.
\]
By enlarging $k_1$ if necessary, we have for $k\geq k_1$,
\[
\mu- \varep <\int_{B(y_k,R_1)} |u_{n_k}(x)|^2 dx \leq M_{n_k}(R_1) \leq M_{n_k}(2R_1) <\mu+\varep.
\]
We now set $u^1_k(x) = \varphi_{R_1}(x-y_k) u_{n_k}$. It follows that for $k\geq k_1$,
\begin{align*}
\|u^1_k\|^2_{L^2} - \mu &\geq \int_{B(y_k, R_1)} |u_{n_k}(x)|^2 dx - \mu > -\varep, \\
\|u^1_k\|^2_{L^2} - \mu &\leq \int_{B(y_k,2R_1)} |u_{n_k}(x)|^2 dx - \mu \leq M_{n_k}(2R_1) -\mu <\varep,
\end{align*}
which imply that $\lim_{k\rightarrow \infty}\|u^1_k\|^2_{L^2} = \mu$. Moreover, by the choice of $R_1$ and \eqref{product-est-app}, we have for $k\geq k_1$,
\begin{align} \label{est-u-1k}
\|(-\Delta)^{\frac{s}{2}} u^1_k\|^2_{L^2} - \frac{1}{2} C(d,s)\iint \frac{[\varphi_{R_1}(x-y_k)]^2 |u_{n_k}(x) - u_{n_k}(y)|^2}{|x-y|^{d+2s}} dx dy \leq CM^2 R_1^{-s} \leq \varep.
\end{align}
We next set $u^2_k(x) = \psi_{R_k}(x-y_k) u_{n_k}$, where $R_k \rightarrow \infty$ as $k\rightarrow \infty$ is such that $M_{n_k}(2R_k) \leq \mu+\varep$. Observe that
\begin{align*}
\|u_{n_k} - u^1_k - u^2_k\|^2_{L^2} &= \int \left(1-\varphi^2_{R_1}(x-y_k) - \psi_{R_k}^2(x-y_k)\right) |u_{n_k}(x)|^2 dx \\
&\leq \int_{R_1 \leq |x-y_k|\leq 2R_k} |u_{n_k}(x)|^2 dx \\
&= \int_{B(y_k,2R_k)} |u_{n_k}(x)|^2 dx - \int_{B(y_k, R_1)} |u_{n_k}(x)|^2 dx \\
&\leq M_{n_k}(2R_k) - (\mu-\varep) \leq 2\varep.
\end{align*}
Since $\varep>0$ is arbitrary, it follows that 
\begin{align} \label{L2-limit}
\|u_{n_k} - u^1_k - u^2_k\|^2_{L^2} \rightarrow 0
\end{align}
as $k\rightarrow \infty$, hence $\lim_{k\rightarrow \infty} \|u^2_k\|^2_{L^2} = a- \mu$ and also for $k$ large enough
\begin{align} \label{est-u-2k}
\|(-\Delta)^{\frac{s}{2}} u^2_k\|^2_{L^2} - \frac{1}{2} C(d,s)\iint \frac{[\psi_{R_k}(x-y_k)]^2 |u_{n_k}(x) - u_{n_k}(y)|^2}{|x-y|^{d+2s}} dx dy \leq CM^2 R_k^{-s} 
\leq \varep.
\end{align}
By using the fractional Gagliardo-Nirenberg inequality, we have for $2 \leq q <2^*$,
\[
\|u_{n_k} - u^1_k - u^2_k\|_{L^q} \leq C\|(-\Delta)^{\frac{s}{2}} (u_{n_k} - u^1_k - u^2_k)\|^{\frac{d}{2s} -\frac{d}{sq}}_{L^2} \|u_{n_k} - u^1_k - u^2_k\|^{1-\frac{d}{2s} +\frac{d}{sq}}_{L^2}.
\]
Since the multiplication by a Schwarz function is a continuous map from $H^s(\R^d)$ to itself, we see that $(u^1_k)_{k\geq 1}$ and $(u^2_k)_{k\geq 1}$ are bounded in $H^s(\R^d)$. It follows from \eqref{L2-limit} that
\[
\|u_{n_k} - u^1_k - u^2_k\|_{L^q} \rightarrow 0
\]
as $k\rightarrow \infty$ for any $2\leq q<2^*$. Moreover, by construction
\[
\text{dist}\left( \text{supp}(u^1_k), \text{supp}(u^2_k)\right) \geq 2R_k - R_1 \rightarrow \infty
\]
as $k\rightarrow \infty$. Finally, the last conclusion follows from \eqref{est-u-1k}, \eqref{est-u-2k} and the fact
\[
1 - [\varphi_{R_1}(x-y_k)]^2 - [\psi_{R_k}(x-y_k)]^2 \geq 0.
\] 
The proof is complete.
\end{proof}

\begin{remark} \label{rem-concen-compact-all-k}
	If instead of \eqref{limit-un}, we assume $\|u_n\|^2_{L^2}=a$ for all $n\geq 1$, then the compactness holds for all $k\geq 1$, i.e. there exists a sequence $(y_k)_{k\geq 1} \subset \R^d$ such that for all $\varep>0$, there exists $R(\varep)>0$ such that for all $k\geq 1$,
	\[
	\int_{B(y_k,R(\varep))} |u_{n_k}(x)|^2 dx \geq a-\varep.
	\]
	In fact, since $\|u_{n_k}\|^2_{L^2} =a$ for all $k\geq 1$, we see that for each $1\leq k\leq k(\varep)$, there exists $R_k(\varep)>0$ such that
	\[
	\int_{B(y_k, R_k(\varep))} |u_{n_k}(x)|^2 dx \geq a-\varep,
	\]
	where $k(\varep)$ is given in \eqref{estimate-limit}. Taking $R(\varep) := \max \{ R_0(\varep), R_1(\varep), \cdots, R_{k(\varep)}(\varep)\}$ with $R_0(\varep)$ as in \eqref{estimate-limit}, we obtain
	\[
	\int_{B(y_k,R(\varep))} |u_{n_k}(x)|^2 dx \geq a-\varep
	\]
	for all $k\geq 1$.
\end{remark}

\begin{remark} \label{rem-concen-compact}
	\begin{itemize}
		\item If the vanishing occurs, then $(u_{n_k})_{k\geq 1}$ converges strongly to 0 in $L^q(\R^d)$ for any $2<q<2^*$. Indeed, it follows from the following fact (see e.g. \cite{FQT}): if $(f_n)_{n\geq 1}$ is a bounded sequence in $H^s(\R^d)$ and satisfies
		\[
		\lim_{n\rightarrow \infty} \sup_{y \in \R^d} \int_{B(y,R)} |f_n(x)|^2 dx = 0
		\]
		for some $R>0$, then $f_n \rightarrow 0$ in $L^q(\R^d)$ for any $2<q<2^*$. 
		\item If the compactness occurs, then we infer that up to a subsequence, $u_{n_k}(\cdot+y_k)$ converges strongly to $u$ in $L^q(\R^d)$ for any $2\leq q<2^*$. Indeed, there exists a sequence $(y_k)_{k\geq 1} \subset \R^d$ such that for each $l \geq 1$, there exist $R_l>0$ and $k_l\geq 1$ such that for all $k\geq k_l$,
		\begin{align} \label{compactness-app}
		\int_{B(y_k,R_l)} |u_{n_k}(x)|^2 dx \geq a - \frac{1}{l} \quad \text{or} \quad \int_{B(0,R_l)} |\tilde{u}_{n_k}(x)|^2 dx \geq a- \frac{1}{l},
		\end{align}
		where $\tilde{u}_{n_k} (x):= u_{n_k}(x+y_k)$. Since the translated sequence $(\tilde{u}_{n_k})_{k\geq 1}$ is bounded in $H^s(\R^d)$, so up to subsequence, $\tilde{u}_{n_k} \rightharpoonup u$ weakly in $H^s(\R^d)$. By the lower semi-continuity of weak convergence, $\|u\|^2_{L^2} \leq a$. For each $l\geq 1$, the embedding $H^s(B(0,R_l)) \hookrightarrow L^2(B(0,R_l))$ is compact, so up to a subsequence, we have $\tilde{u}_{n_k} \rightarrow u$ strongly in $L^2(B(0,R_l))$. By a standard diagonalization argument, one may assume that there exists a subsequence still denoted by $(\tilde{u}_{n_k})_{k\geq 1}$ satisfies
		\[
		\tilde{u}_{n_k} \rightarrow u \text{ strongly in } L^2(B(0,R_l))
		\]
		for every $l\geq 1$. Taking $k\rightarrow \infty$ in \eqref{compactness-app}, we obtain
		\[
		a- \frac{1}{l} \leq \lim_{k\rightarrow \infty} \int_{B(0,R_l)} |\tilde{u}_{n_k}(x)|^2 dx = \|u\|^2_{L^2(B(0,R_l))} \leq \|u\|^2_{L^2}.
		\]
		Since $l\geq 1$ is arbitrary, it follows that $\|u\|^2_{L^2} =a$. Thus $\tilde{u}_{n_k} \rightarrow u$ strongly in $L^2(\R^d)$. By interpolating between $L^2(\R^d)$ and $L^{2^*}(\R^d)$ and using Sobolev embedding together with the boundedness of $(u_{n_k})_{k\geq 1}$ in $H^s(\R^d)$, we prove that $\tilde{u}_{n_k} \rightarrow u$ strongly in $L^q(\R^d)$ for any $2\leq q<2^*$. 
	\end{itemize}
\end{remark}

A direct application of the concentration-compactness lemma is the following compactness of minimizing sequence for the fractional Gagliardo-Nirenberg inequality.
\begin{lemma} \label{lem-compact-mini-fGN}
	Let $(u_n)_{n\geq 1}$ be a non-negative bounded sequence in $H^s(\R^d)$ satisfying
	\begin{align} \label{assumption-compact-mini-fGN-1}
	\lim_{n\rightarrow \infty} \|u_n\|^2_{L^2} = a^*, \quad \lim_{n\rightarrow \infty} \|(-\Delta)^{\frac{s}{2}} u_n\|^2_{L^2} - \frac{d}{d+2s} \|u_n\|^{\frac{4s}{d}+2}_{L^{\frac{4s}{d}+2}}  =0
	\end{align}
	and
	\begin{align} \label{assumption-compact-mini-fGN-2}
	\liminf_{n\rightarrow \infty} \|u_n\|^{\frac{4s}{d}+2}_{L^{\frac{4s}{d}+2}} >0,
	\end{align}
	where $a^*$ is defined in \eqref{defi-a*}. Then there exist a subsequence $(u_{n_k})_{k\geq 1}$ and a sequence $(y_k)_{k\geq 1} \subset \R^d$ such that
	\[
	u_{n_k}(\cdot + y_k) \rightarrow \gamma^{\frac{d}{2}} Q(\gamma \cdot) \text{ strongly in } H^s(\R^d)
	\]	
	for some $\gamma>0$ as $k\rightarrow \infty$.
\end{lemma}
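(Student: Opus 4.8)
The plan is to apply the concentration--compactness lemma (Lemma~\ref{lem-concen-compact}) to $(u_n)_{n\ge1}$, which by \eqref{assumption-compact-mini-fGN-1} is bounded in $H^s(\R^d)$ with $\|u_n\|_{L^2}^2\to a^*$, to show that neither vanishing nor dichotomy can occur, and then to identify the resulting compact limit via Theorem~\ref{theo-fractional-GN}. Throughout I would write $t_n:=\|(-\Delta)^{\frac s2}u_n\|_{L^2}^2$ and $b_n:=\|u_n\|_{L^{\frac{4s}{d}+2}}^{\frac{4s}{d}+2}$. The key preliminary observation is that the fractional Gagliardo--Nirenberg inequality \eqref{fractional-GN} with $\alpha=\frac{4s}{d}$ reads $b_n\le C_{\text{opt}}\,t_n\,\|u_n\|_{L^2}^{\frac{4s}{d}}$ with $\frac{d}{d+2s}C_{\text{opt}}(a^*)^{\frac{2s}{d}}=1$ (by the sharp constant computed after Theorem~\ref{theo-fractional-GN}); combining this with the second relation in \eqref{assumption-compact-mini-fGN-1}, namely $t_n=\frac{d}{d+2s}b_n+o_n(1)$, shows that $(u_n)$ is asymptotically optimal for \eqref{fractional-GN}, and in particular \eqref{assumption-compact-mini-fGN-2} forces $\liminf_{n\to\infty}t_n>0$.

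To rule out vanishing: if it occurred, then by Remark~\ref{rem-concen-compact} one would have $u_{n_k}\to0$ strongly in $L^q(\R^d)$ for every $2<q<2^*$, and since $2<\frac{4s}{d}+2<2^*$ this contradicts \eqref{assumption-compact-mini-fGN-2}. The main step is to rule out dichotomy. Assume it holds, with $\mu\in(0,a^*)$ and sequences $(u^1_k)$, $(u^2_k)$ as in \eqref{dichotomy-possibility}; put $t^i_k:=\|(-\Delta)^{\frac s2}u^i_k\|_{L^2}^2$ and $b^i_k:=\|u^i_k\|_{L^{\frac{4s}{d}+2}}^{\frac{4s}{d}+2}$. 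Taking $q=\frac{4s}{d}+2$ in the first line of \eqref{dichotomy-possibility} and using that $\mathrm{supp}(u^1_k)$ and $\mathrm{supp}(u^2_k)$ are eventually disjoint, I get $b_{n_k}=b^1_k+b^2_k+o_k(1)$. Applying \eqref{fractional-GN} to each $u^i_k$ and using $\|u^1_k\|_{L^2}^2\to\mu$, $\|u^2_k\|_{L^2}^2\to a^*-\mu$, both strictly less than $a^*$, there is $\theta\in(0,1)$ with $\frac{d}{d+2s}b^i_k\le\theta\,t^i_k$ for $k$ large, whence $t_{n_k}=\frac{d}{d+2s}b_{n_k}+o_k(1)\le\theta(t^1_k+t^2_k)+o_k(1)$. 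Since the last line of \eqref{dichotomy-possibility} gives $\liminf_k(t_{n_k}-t^1_k-t^2_k)\ge0$, these two facts force $t^1_k+t^2_k\to0$, hence $t_{n_k}\to0$, contradicting $\liminf_n t_n>0$. Therefore compactness holds.

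Once compactness is established, Remark~\ref{rem-concen-compact} gives, up to a subsequence, a sequence $(y_k)\subset\R^d$ with $\widetilde u_{n_k}:=u_{n_k}(\cdot+y_k)\to u$ strongly in $L^q(\R^d)$ for all $2\le q<2^*$, where $\|u\|_{L^2}^2=a^*$; along a further subsequence $u\ge0$ a.e., and $\|u\|_{L^{\frac{4s}{d}+2}}^{\frac{4s}{d}+2}=\lim_k b_{n_k}>0$, so $u\not\equiv0$. As $(\widetilde u_{n_k})$ is bounded in $H^s(\R^d)$ it converges weakly to $u$ there, so by weak lower semicontinuity and $t_{n_k}=\frac{d}{d+2s}b_{n_k}+o_k(1)\to\frac{d}{d+2s}\|u\|_{L^{\frac{4s}{d}+2}}^{\frac{4s}{d}+2}$ I obtain $\|(-\Delta)^{\frac s2}u\|_{L^2}^2\le\frac{d}{d+2s}\|u\|_{L^{\frac{4s}{d}+2}}^{\frac{4s}{d}+2}$, while \eqref{fractional-GN} together with $\|u\|_{L^2}^2=a^*$ gives the reverse inequality; hence both are equalities. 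In particular $u$ is an optimizer of \eqref{fractional-GN} and $\|(-\Delta)^{\frac s2}\widetilde u_{n_k}\|_{L^2}\to\|(-\Delta)^{\frac s2}u\|_{L^2}$, which upgrades the weak convergence to $\widetilde u_{n_k}\to u$ strongly in $H^s(\R^d)$. By Theorem~\ref{theo-fractional-GN}, $u(x)=\beta Q(\gamma(x+y))$ for some $\beta\in\C\setminus\{0\}$, $\gamma>0$, $y\in\R^d$; the normalization $\|u\|_{L^2}^2=a^*=\|Q\|_{L^2}^2$ forces $|\beta|=\gamma^{d/2}$, and $u\ge0$ with $Q>0$ forces $\beta=\gamma^{d/2}$, so after replacing $y_k$ by $y_k-y$ I conclude $u_{n_k}(\cdot+y_k)\to\gamma^{\frac d2}Q(\gamma\cdot)$ strongly in $H^s(\R^d)$.

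The hard part is ruling out dichotomy: one must exploit that splitting the mass into two pieces, each of mass strictly below the critical value $a^*=\|Q\|_{L^2}^2$, makes each piece strictly sub-optimal for the mass-critical Gagliardo--Nirenberg inequality — this is exactly where the strict inequalities $\mu,a^*-\mu<a^*$ enter, through the factor $\theta<1$ — and to combine this strict deficit with the sub-additivity of the $\dot H^s$-seminorm recorded in the last line of \eqref{dichotomy-possibility}, which together force the total gradient norm to collapse and contradict the non-degeneracy hypothesis \eqref{assumption-compact-mini-fGN-2}.
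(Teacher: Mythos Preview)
Your proof is correct and follows essentially the same approach as the paper's: apply the concentration--compactness lemma, rule out vanishing via Remark~\ref{rem-concen-compact}, rule out dichotomy by exploiting that each piece has mass strictly below $a^*$ and is therefore strictly sub-optimal for the sharp Gagliardo--Nirenberg inequality, and then identify the compact limit using Theorem~\ref{theo-fractional-GN}. Your treatment of dichotomy reaches the contradiction by deducing $t_{n_k}\to 0$, whereas the paper equivalently shows $\liminf_k\bigl(t_{n_k}-\tfrac{d}{d+2s}b_{n_k}\bigr)>0$ directly from the factor $\min\{(a^*/\mu)^{2s/d},(a^*/(a^*-\mu))^{2s/d}\}>1$; these are the same argument in different packaging, and you are slightly more careful than the paper in using the non-negativity of $u_n$ to pin down $\beta=\gamma^{d/2}$ rather than just $|\beta|=\gamma^{d/2}$.
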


\begin{proof}
	By Lemma $\ref{lem-concen-compact}$, there exists a subsequence $(u_{n_k})_{k\geq 1}$ satisfying one of the following three possibilities: vanishing, dichotomy and compactness.
	
	\noindent {\bf No vanishing.} Suppose that the vanishing occurs. We have from Remark $\ref{rem-concen-compact}$ that $u_{n_k} \rightarrow 0$ strongly in $L^q$ for any $2<q<2^*$. In particular, $\|u_{n_k}\|^{\frac{4s}{d}+2}_{L^{\frac{4s}{d}+2}} \rightarrow 0$ which contradicts to \eqref{assumption-compact-mini-fGN-2}. 
	
	\noindent {\bf No dichotomy.} If the dichotomy occurs, then there exist $\mu \in (0, a^*)$ with $a^*:= \|Q\|^2_{L^2}$ and sequences $(u^1_k)_{k\geq 1}, (u^2_k)_{k\geq 1}$ bounded in $H^s(\R^d)$ such that \eqref{dichotomy-possibility} holds. We infer that
	\begin{align} \label{compact-mini-fGN-proof-1}
	\|u_{n_k}\|^{\frac{4s}{d}+2}_{L^{\frac{4s}{d}+2}} = \|u^1_k\|^{\frac{4s}{d}+2}_{L^{\frac{4s}{d}+2}} + \|u^2_k\|^{\frac{4s}{d}+2}_{L^{\frac{4s}{d}+2}} +o_k(1).
	\end{align}
	By the fractional Gagliardo-Nirenberg inequality,
	\begin{align*}
	\|(-\Delta)^{\frac{s}{2}} u^1_k\|^2_{L^2} &\geq \frac{d}{d+2s} \left( \frac{a^*}{\|u^1_k\|^2_{L^2}} \right)^{\frac{2s}{d}} \|u^1_k\|^{\frac{4s}{d}+2}_{L^{\frac{4s}{d}+2}} \\
	&= \frac{d}{d+2s} \left( \frac{a^*}{\mu} \right)^{\frac{2s}{d}} \|u^1_k\|^{\frac{4s}{d}+2}_{L^{\frac{4s}{d}+2}} + o_k(1).
	\end{align*}
	Similarly,
	\[
	\|(-\Delta)^{\frac{s}{2}} u^2_k\|^2_{L^2}  \geq \frac{d}{d+2s} \left( \frac{a^*}{a^*-\mu} \right)^{\frac{2s}{d}} \|u^2_k\|^{\frac{4s}{d}+2}_{L^{\frac{4s}{d}+2}} + o_k(1).
	\]
	It follows from \eqref{dichotomy-possibility} that
	\begin{align*}
	\|(-\Delta)^{\frac{s}{2}} u_{n_k}\|^2_{L^2} &\geq \|(-\Delta)^{\frac{s}{2}} u^1_k\|^2_{L^2}  + \|(-\Delta)^{\frac{s}{2}} u^2_k\|^2_{L^2} +o_k(1) \\
	&\geq \frac{d}{d+2s} \left[ \left(\frac{a^*}{\mu} \right)^{\frac{2s}{d}} \|u^1_k\|^{\frac{4s}{d}+2}_{L^{\frac{4s}{d}+2}} +  \left( \frac{a^*}{a^*-\mu} \right)^{\frac{2s}{d}} \|u^2_k\|^{\frac{4s}{d}+2}_{L^{\frac{4s}{d}+2}}\right] +o_k(1) \\
	& \geq \frac{d}{d+2s} \min \left\{\left(\frac{a^*}{\mu} \right)^{\frac{2s}{d}}, \left( \frac{a^*}{a^*-\mu} \right)^{\frac{2s}{d}}  \right\} \left( \|u^1_k\|^{\frac{4s}{d}+2}_{L^{\frac{4s}{d}+2}} + \|u^2_k\|^{\frac{4s}{d}+2}_{L^{\frac{4s}{d}+2}} \right) + o_k(1) \\
	&= \frac{d}{d+2s} \min \left\{\left(\frac{a^*}{\mu} \right)^{\frac{2s}{d}}, \left( \frac{a^*}{a^*-\mu} \right)^{\frac{2s}{d}}  \right\}  \|u_{n_k}\|^{\frac{4s}{d}+2}_{L^{\frac{4s}{d}+2}} + o_k(1).
	\end{align*}
	Since $\mu \in (0,a^*)$, it follows from \eqref{assumption-compact-mini-fGN-2} that
	\[
	\liminf_{k\rightarrow \infty} \|(-\Delta)^{\frac{s}{2}} u_{n_k}\|^2_{L^2} - \frac{d}{d+2s} \|u_{n_k}\|^{\frac{4s}{d}+2}_{L^{\frac{4s}{d}+2}} >0 
	\]
	which contradicts to \eqref{assumption-compact-mini-fGN-1}. 
	
	\noindent {\bf Compactness.} Therefore, the compactness occurs. By Remark $\ref{rem-concen-compact}$, there exist a subsequence still denoted by $(u_{n_k})_{k\geq 1}$ and $(\tilde{y}_k)_{k\geq 1} \subset \R^d$ such that $(u_{n_k}(\cdot+\tilde{y}_k))_{k\geq 1}$ converges weakly in $H^s(\R^d)$ and strongly in $L^q$ for any $2\leq q<2^*$ to some function $u$. It follows that
	\[
	\|u\|^2_{L^2} = \lim_{k\rightarrow \infty} \|u_{n_k}(\cdot+\tilde{y}_k)\|^2_{L^2} = \lim_{k\rightarrow \infty} \|u_{n_k}\|^2_{L^2} = a^* 
	\]
	and by the lower semicontinuity of weak convergence,
	\begin{align*}
	\|(-\Delta)^{\frac{s}{2}} u\|_{L^2}^2 - \frac{d}{d+2s}\|u\|^{\frac{4s}{d}+2}_{L^{\frac{4s}{d}+2}} &\leq \liminf_{k\rightarrow \infty}  \|(-\Delta)^{\frac{s}{2}}[u_{n_k}(\cdot+\tilde{y}_k)]\|_{L^2}^2 - \frac{d}{d+2s} \|u_{n_k}(\cdot+\tilde{y}_k)\|^{\frac{4s}{d}+2}_{L^{\frac{4s}{d}+2}} \\
	&=\lim_{k\rightarrow \infty} \|(-\Delta)^{\frac{s}{2}}u_{n_k}\|_{L^2}^2 - \frac{d}{d+2s} \|u_{n_k}\|^{\frac{4s}{d}+2}_{L^{\frac{4s}{d}+2}} =0.
	\end{align*}
	By the fractional Gagliardo-Nirenberg inequality, we obtain
	\[
	\|(-\Delta)^{\frac{s}{2}} u\|^2_{L^2} - \frac{d}{d+2s}\|u\|^{\frac{4s}{d}+2}_{L^{\frac{4s}{d}+2}} =0
	\]
	which implies $\|(-\Delta)^{\frac{s}{2}} u\|_{L^2}=\lim_{k\rightarrow \infty} \|(-\Delta)^{\frac{s}{2}} [u_{n_k}(\cdot+\tilde{y}_k)]\|_{L^2}$. In particular, 
	\[
	u_{n_k}(\cdot+\tilde{y}_k) \rightarrow u \text{ strongly in } H^s(\R^d)
	\]
	as $k\rightarrow \infty$. We also have that $u$ is an optimizer for the fractional Gagliardo-Nirenberg inequality. By Theorem $\ref{theo-fractional-GN}$, $u(x) = \beta Q(\gamma(x+y))$ for some $\beta, \gamma>0$ and $y \in \R^d$. Since $\|u\|^2_{L^2} =a^*= \|Q\|^2_{L^2}$, we infer that $\beta=\gamma^{\frac{d}{2}}$, hence $u(x) = \gamma^{\frac{d}{2}}Q(\gamma(x+y))$ for some $\gamma>0$ and $y \in \R^d$. We thus obtain
	\[
	u_{n_k}(\cdot + y_k) \rightarrow\gamma^{\frac{d}{2}}Q(\gamma \cdot) \text{ strongly in } H^s(\R^d)
	\]
	as $k\rightarrow \infty$, where $y_k:= \tilde{y}_k - y$. The proof is complete.	
\end{proof}

\section{Existence and non-existence of minimizers} \label{section-existence}
\setcounter{equation}{0}

\subsection{No potential}
In this subsection, we show the existence and non-existence of minimizers for $I(a)$ in the case of no external potential.

\noindent {\bf Proof of Theorem $\ref{theo-no-potential}$.} 
{\bf The case $0<\alpha<\frac{4s}{d}$.} In this case, the existence of minimizers for $I(a)$ was established in \cite{Feng} via the concentration-compactness principle. We now give an alternative simple proof using the radial compact embedding. 

{\bf Step 1:} We first show that the minimizing problem $I(a)$ is well-defined and there exists $C>0$ such that $I(a)\leq -C<0$. To see this, we take $u \in H^s(\R^d)$ be such that $\|u\|^2_{L^2} =a$. By the fractional Gagliardo-Nirenberg inequality, we have
\[
\|u\|^{\alpha+2}_{L^{\alpha+2}} \lesssim \|(-\Delta)^{\frac{s}{2}} u\|^{\frac{d\alpha}{2s}}_{L^2} \|u\|^{\frac{4s-(d-2s)\alpha}{2}}_{L^2}
\]
which by Young's inequality and the fact $0<\frac{d\alpha}{2s}<2$ imply for any $\varep>0$,
\begin{align} \label{GN-Young-app}
\frac{1}{\alpha+2}\|u\|^{\alpha+2}_{L^{\alpha+2}} \leq \frac{\varep}{2} \|(-\Delta)^{\frac{s}{2}} u\|^2_{L^2} + C(\varep, \alpha, a).
\end{align}
This shows that for any $\varep>0$, there exists $C(\varep, \alpha,a)>0$ such that
\begin{align} \label{ener-est-no-potential-1}
E(u) \geq \frac{1}{2}(1-\varep)\|(-\Delta)^{\frac{s}{2}} u\|^2_{L^2} - C(\varep, \alpha,a).
\end{align}
By choosing $0<\varep<1$, we see that $E(u)\geq -C(\varep,\alpha,a)$. The minimizing problem $I(a)$ is thus well-defined. Next, we define 
\begin{align} \label{scaling}
u_\lambda(x)=\lambda^{\frac{d}{2}} u(\lambda x), \quad \lambda>0.
\end{align}
It follows that $\|u_\lambda\|^2_{L^2} = \|u\|^2_{L^2} =a$ and
\[
E(u_\lambda) = \frac{\lambda^{2s}}{2} \|(-\Delta)^{\frac{s}{2}}u\|^2_{L^2} - \frac{\lambda^{\frac{d\alpha}{2}}}{\alpha+2} \|u\|^{\alpha+2}_{L^{\alpha+2}}.
\]
Since $0<\frac{d\alpha}{2}<2s$, we can find $\lambda_0>0$ small enough so that $E(u_{\lambda_0}) <0$. Taking $C=-E(u_{\lambda_0})>0$, we get $I(a) \leq -C<0$.

{\bf Step 2:} We will show that there exists at least a minimizer for $I(a)$. Let $(u_n)_{n\geq 1}$ be a minimizing sequence for $I(a)$, i.e. $\|u_n\|^2_{L^2} =a$ for all $n\geq 1$ and $E(u_n) \rightarrow I(a)$ as $n\rightarrow \infty$. We may assume $u_n$ is radially symmetric and radially decreasing function. In fact, let $u^\ast_n$ be the symmetric rearrangement of $u_n$. Since the symmetric rearrangement preserves $L^q(\R^d)$ norm for any $1\leq q\leq \infty$ and by fractional Polya-Szeg\"o's inequality $\|(-\Delta)^{\frac{s}{2}} u^\ast_n\|_{L^2} \leq \|(-\Delta)^{\frac{s}{2}} u_n\|_{L^2}$, we see that $E(u_n^\ast) \leq E(u_n)$ and $\|u^\ast_n\|^2_{L^2}=\|u_n\|^2_{L^2} =a$. This shows that $(u^\ast_n)_{n\geq 1}$ is also a minimizing sequence for $I(a)$. Moreover, it follows from \eqref{ener-est-no-potential-1} that $(u_n)_{n\geq 1}$ is bounded in $H^s(\R^d)$. Indeed, since $E(u_n)\rightarrow I(a)$ as $n\rightarrow \infty$, there exists $C>0$ such that $E(u_n) \leq I(a) + C$ for  any $n\geq 1$. By \eqref{ener-est-no-potential-1}, we have that
\[
\frac{1}{2}(1-\varep) \|(-\Delta)^{\frac{s}{2}} u_n\|^2_{L^2} \leq E(u_n) + C(\varep, \alpha, a) \leq I(a) + C(\varep, \alpha, a)
\]
for any $n\geq 1$. This implies that $(u_n)_{n\geq 1}$ is bounded in $H^s(\R^d)$ by taking $0<\varep<1$. We thus obtain a bounded sequence in $H^s_{\text{rd}}(\R^d)$. By using the compact embedding $H^s_{\text{rd}}(\R^d) \hookrightarrow L^q(\R^d)$ for any $2<q<2^*$, there exist $u \in H^s(\R^d)$ and a subsequence $(u_{n_k})_{k\geq 1}$ such that $u_{n_k} \rightharpoonup u$ weakly in $H^s(\R^d)$ and $u_{n_k} \rightarrow u$ strongly in $L^q(\R^d)$ for any $q$ as above. We will show that $u$ is indeed a minimizer for $I(a)$. Since $I(a)<0$, we have that $u \ne 0$. In fact, assume by contradiction that $u\equiv 0$, then
\[
0\leq \liminf_{k\rightarrow \infty} \frac{1}{2} \|(-\Delta)^{\frac{s}{2}} u_{n_k}\|^2_{L^2}  = \liminf_{k\rightarrow \infty} E(u_{n_k}) = I(a) <0
\]
which is a contradiction. We have from the fact $u_{n_k} \rightharpoonup u$ weakly in $H^s(\R^d)$ and $u_{n_k} \rightarrow u$ strongly in $L^{\alpha+2}(\R^d)$ that
\[
E(u) \leq \liminf_{k\rightarrow \infty} E(u_{n_k}) = I(a).
\]
Moreover, by the lower semi-continuity of weak convergence,
\[
\|u\|^2_{L^2} \leq \liminf_{k\rightarrow \infty} \|u_{n_k}\|^2_{L^2} = a.
\]
Now set $\lambda = \sqrt{\frac{a}{\|u\|^2_{L^2}}} \geq 1$. We have that
\[
E(\lambda u) = \lambda^2 E(u) + \frac{\lambda^2(1-\lambda^\alpha)}{\alpha+2} \|u\|^{\alpha+2}_{L^{\alpha+2}}
\]
or
\[
E(u) = \frac{E(\lambda u)}{\lambda^2} + \frac{\lambda^\alpha-1}{\alpha+2} \|u\|^{\alpha+2}_{L^{\alpha+2}}.
\]
Since $u\ne 0$ and $\|\lambda u\|^2_{L^2} =a$, it follows that
\[
I(a) \geq E(u) \geq \frac{E(\lambda u)}{\lambda^2} \geq \frac{I(a)}{\lambda^2}.
\]
This implies that $\lambda \leq 1$ since $I(a)<0$, hence $\lambda =1$ and $\|u\|^2_{L^2} =a$. We thus get $I(a) \leq E(u)$, hence $E(u) = I(a)$ or $u$ is a minimizer for $I(a)$. This shows that for any $a>0$, there exists at least a minimizer for $I(a)$ and $-\infty<I(a)<0$.

{\bf The case $\alpha=\frac{4s}{d}$.} We first show the non-existence of minimizers for $I(a)$ in the case $0<a<a^*$. Assume that there exists a minimizer $u$ for $I(a)$ with $0<a<a^*$. By the Gagliardo-Nirenberg inequality,
\begin{align} \label{non-min-Ia-1}
I(a) = E(u) \geq \frac{1}{2} \left(1 - \left(\frac{a}{a^*}\right)^{\frac{2s}{d}} \right) \|(-\Delta)^{\frac{s}{2}} u\|^2_{L^2} >0.
\end{align}
On the other hand, we take $u \in H^s(\R^d)$ satisfying $\|u\|^2_{L^2}=a$ and consider $u_\lambda$ as in \eqref{scaling}. We see that $\|u_\lambda\|^2_{L^2}=\|u\|^2_{L^2}=a$ and
\[
I(a) \leq E(u_\lambda) = \frac{\lambda^{2s}}{2} \|(-\Delta)^{\frac{s}{2}} u\|^2_{L^2} - \frac{\lambda^{2s}}{2(d+2s)} \|u\|^{\frac{4s}{d}+2}_{L^{\frac{4s}{d}+2}} \rightarrow 0
\]
as $\lambda \rightarrow 0$ which contradicts \eqref{non-min-Ia-1}.

We next prove the non-existence of minimizers for $I(a)$ when $a>a^*$. To this end, we need the following estimate due to \cite[Lemma 3.2]{DTWZ}.
\begin{lemma}[\cite{DTWZ}] \label{lem-decay-est}
	Let $d\geq 1$ and $s\in (0,1)$. Let $\varphi: \R^d \rightarrow \R$ be a smooth compactly supported function satisfying $0\leq \varphi \leq 1$, $\varphi=1$ on $|x| \leq 1$. Denote for $\tau>0$,
	\begin{align} \label{defi-Q-tau}
	Q_\tau(x):= \varphi(\tau^{-1} x) Q(x),
	\end{align}
	where $Q$ is the unique (up to translations) positive radial ground state related to \eqref{ell-equ-fra-cri}. Then it holds that
	\[
	\|(-\Delta)^{\frac{s}{2}} Q_\tau \|^2_{L^2} = \|(-\Delta)^{\frac{s}{2}} Q\|^2_{L^2} + O(\tau^{-4s})
	\]
	as $\tau \rightarrow \infty$.
\end{lemma}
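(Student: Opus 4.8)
The plan is to set $\varphi_\tau(x) := \varphi(\tau^{-1}x)$, so that $Q_\tau = \varphi_\tau Q$, and to localize the whole estimate onto the ``tail'' $w_\tau := Q - Q_\tau = (1-\varphi_\tau)Q$. First I would record the elementary properties of $w_\tau$: since $\varphi_\tau$ is a Schwartz function, $\varphi_\tau Q \in H^s(\R^d)$, hence $w_\tau = Q - \varphi_\tau Q \in H^s(\R^d)$; moreover $w_\tau \in C^\infty$, $\mathrm{supp}\,w_\tau \subset \{|x|\ge\tau\}$, and combining the decay bounds \eqref{decay-1}--\eqref{decay-2} with $\|\nabla\varphi_\tau\|_{L^\infty} \lesssim \tau^{-1} \le 1$ (for $\tau\ge 1$) one gets the pointwise bounds $|w_\tau(x)| \lesssim (1+|x|)^{-(d+2s)}$ and $|\nabla w_\tau(x)| \lesssim (1+|x|)^{-(d+2s)}$. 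Expanding the square, $\|(-\Delta)^{\frac s2}Q_\tau\|^2_{L^2} = \|(-\Delta)^{\frac s2}Q\|^2_{L^2} - 2\int_{\R^d}(-\Delta)^{\frac s2}Q\,(-\Delta)^{\frac s2}w_\tau\,dx + \|(-\Delta)^{\frac s2}w_\tau\|^2_{L^2}$, so it suffices to show that the cross term and $\|(-\Delta)^{\frac s2}w_\tau\|^2_{L^2}$ are both $O(\tau^{-4s})$ as $\tau\to\infty$.

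For the cross term, since $Q \ge 0$ solves \eqref{ell-equ-fra-cri} and $Q \in H^{2s+1}(\R^d)$, one has $(-\Delta)^s Q = |Q|^{4s/d}Q - Q \in L^2(\R^d)$, so by Plancherel's theorem and the equation for $Q$, $\int_{\R^d}(-\Delta)^{\frac s2}Q\,(-\Delta)^{\frac s2}w_\tau\,dx = \int_{\R^d}\big(|Q|^{4s/d}-1\big)Q^2(1-\varphi_\tau)\,dx$. On the support $\{|x|\ge\tau\}$ of $1-\varphi_\tau$ one has $0\le 1-\varphi_\tau\le 1$ and, for $\tau$ large, $0 \le Q \le 1$ so $|\,|Q|^{4s/d}-1\,| \le 1$; hence this term is bounded in absolute value by $\int_{|x|\ge\tau}Q^2\,dx \lesssim \int_{|x|\ge\tau}(1+|x|)^{-2(d+2s)}\,dx \lesssim \tau^{-(d+4s)}$ by \eqref{decay-1}, which is $O(\tau^{-4s})$ (indeed $o(\tau^{-4s})$ since $d\ge 1$).

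The main point is $\|(-\Delta)^{\frac s2}w_\tau\|^2_{L^2} = O(\tau^{-4s})$. I would argue directly from the quadratic-form expression \eqref{equi-norm}, $\|(-\Delta)^{\frac s2}w_\tau\|^2_{L^2} = \tfrac12 C(d,s)\iint_{\R^{2d}} |x-y|^{-(d+2s)}|w_\tau(x)-w_\tau(y)|^2\,dx\,dy$, splitting the domain into $\{|x-y|>1\}$ and $\{|x-y|\le 1\}$. On the first region, $|w_\tau(x)-w_\tau(y)|^2 \le 2w_\tau(x)^2 + 2w_\tau(y)^2$ and integrating out the free variable give a contribution $\lesssim \|w_\tau\|^2_{L^2} \lesssim \int_{|x|\ge\tau}(1+|x|)^{-2(d+2s)}\,dx \lesssim \tau^{-(d+4s)}$. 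On the second region, the inner $y$-integral vanishes whenever $|x|<\tau-1$ (then both $x$ and $y$ lie in $\{\varphi_\tau=1\}$, so $w_\tau(x)=w_\tau(y)=0$), so the $x$-variable effectively runs over $\{|x|\ge\tau-1\}$; there the mean value inequality gives $|w_\tau(x)-w_\tau(y)| \le |x-y|\sup_{z\in[x,y]}|\nabla w_\tau(z)| \lesssim |x-y|(1+|x|)^{-(d+2s)}$ (using $|z|\gtrsim |x|$ for $z$ on the segment when $\tau$ is large), the kernel $\int_{|\zeta|\le 1}|\zeta|^{2-(d+2s)}\,d\zeta$ converges because $s<1$, and integrating in $x$ yields again $\lesssim \int_{|x|\ge\tau-1}(1+|x|)^{-2(d+2s)}\,dx \lesssim \tau^{-(d+4s)}$. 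Combining the three estimates gives $\|(-\Delta)^{\frac s2}Q_\tau\|^2_{L^2} = \|(-\Delta)^{\frac s2}Q\|^2_{L^2} + O(\tau^{-4s})$ (in fact the sharper $O(\tau^{-(d+4s)})$). The one genuinely substantive idea — and the step most prone to going wrong if done carelessly — is the reduction to $w_\tau$ itself: expanding $|Q_\tau(x)-Q_\tau(y)|^2-|Q(x)-Q(y)|^2$ head-on produces three pieces each only of order $\tau^{-2s}$, and the decay gain comes precisely from a cancellation among them, which is made transparent by working with $w_\tau=(1-\varphi_\tau)Q$ and exploiting the polynomial decay of both $Q$ and $\nabla Q$; the remaining difficulties (handling the singular kernel in the double integral, and the harmless regime $\tau\in[1,2)$) are routine.
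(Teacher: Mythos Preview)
Your argument is correct, and in fact delivers the sharper bound $O(\tau^{-(d+4s)})$. Note, however, that the paper does not actually prove this lemma: it is quoted from \cite[Lemma~3.2]{DTWZ}, with only the remark that it is ``a refined estimate of \eqref{product-est} thanks to the exact decay of $Q$ at infinity''. That remark points to a somewhat different organization than yours: the computation leading to \eqref{product-est} expands $\varphi_\tau(x)Q(x)-\varphi_\tau(y)Q(y)=\varphi_\tau(x)\big(Q(x)-Q(y)\big)+\big(\varphi_\tau(x)-\varphi_\tau(y)\big)Q(y)$ directly inside the double integral and estimates the cross and commutator pieces (the terms labeled I and II there), and the refinement consists in inserting the polynomial decay \eqref{decay-1}--\eqref{decay-2} of $Q$ on the region $|x|,|y|\gtrsim\tau$ to upgrade the generic $R^{-s}$ loss to $\tau^{-4s}$. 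Your route instead subtracts first, setting $w_\tau=(1-\varphi_\tau)Q$ so that $Q_\tau=Q-w_\tau$, and then handles the cross term algebraically via the equation $(-\Delta)^sQ=Q^{1+4s/d}-Q$, which cleanly localizes it to $\{|x|\ge\tau\}$ without any kernel estimate; the only double-integral work left is $\|(-\Delta)^{s/2}w_\tau\|_{L^2}^2$, which is already supported at scale $\gtrsim\tau$. This avoids the cancellation issue you flag at the end (each raw piece of the product expansion is only $O(\tau^{-2s})$) and is arguably the more transparent argument; both approaches, of course, rest on the same polynomial decay of $Q$ and $\nabla Q$.
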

This estimate is in fact a refined estimate of \eqref{product-est} thanks to the exact decay of $Q$ at infinity given in Theorem $\ref{theo-fractional-GN}$. Note that in \cite[Lemma 3.2]{DTWZ}, the above estimate is stated in dimensions $d\geq 2$, but it still holds in 1D. 

Let $\varphi$ be as in Lemma $\ref{lem-decay-est}$ and denote
\begin{align} \label{test-function}
u_\tau(x) = A_\tau \tau^{\frac{d}{2}} \varphi(x) Q_0(\tau x), \quad \tau>0,
\end{align}
where $Q_0 = \frac{Q}{\|Q\|_{L^2}}$ and $A_\tau>0$ is such that $\|u_\tau\|^2_{L^2} =a$ for all $\tau>0$. By definition,
\[
a A^{-2}_\tau = \int \varphi^2(\tau^{-1}x) Q_0^2(x) dx.
\]
Since $\|Q_0\|_{L^2} =1$ and $Q_0(x), |\nabla Q_0(x)| = O\left( |x|^{-d-2s} \right)$ for some $c>0$ as $|x|\rightarrow \infty$, we see that for $\tau$ sufficiently large,
\[
\left|\int (1-\varphi^2(\tau^{-1}x)) Q_0^2(x) dx \right| \lesssim \int_{|x|\geq \tau} |x|^{-2d-4s} dx\lesssim \int_{|x|\geq \tau} |x|^{-2d-4s} dx \lesssim \tau^{-d-4s}.
\]
This shows that
\begin{align} \label{est-A-tau}
a A^{-2}_\tau = 1 + O\left(\tau^{-d-4s}\right) \quad \text{or} \quad A^2_\tau = a + O\left(\tau^{-d-4s}\right)
\end{align}
as $\tau \rightarrow \infty$, where $A=O(\tau^{-d-4s})$ means $A\leq C \tau^{-d-4s}$ for some constant $C>0$ independent of $\tau$. Using the fact
\[
(-\Delta)^{\frac{s}{2}} u_\tau (x) = A_\tau  \frac{\tau^{\frac{d}{2}}}{\|Q\|_{L^2}} (-\Delta)^{\frac{s}{2}} [\varphi(x) Q(\tau x)] = A_\tau \tau^s \frac{\tau^{\frac{d}{2}}}{\|Q\|_{L^2}}  [(-\Delta)^{\frac{s}{2}} Q_\tau] (\tau x),
\]
Lemma $\ref{lem-decay-est}$ implies that
\begin{align*}
\|(-\Delta)^{\frac{s}{2}} u_\tau\|^2_{L^2} &= \frac{A_\tau^2 \tau^{2s}}{\|Q\|^2_{L^2}} \|(-\Delta)^{\frac{s}{2}} Q_\tau\|^2_{L^2} \\
&= \frac{A_\tau^2 \tau^{2s}}{\|Q\|^2_{L^2}} \left( \|(-\Delta)^{\frac{s}{2}} Q\|^2_{L^2} + O(\tau^{-4s})\right)
\end{align*}
as $\tau \rightarrow \infty$. By \eqref{est-A-tau}, we see that
\begin{align} \label{sobo-norm-test-function}
\|(-\Delta)^{\frac{s}{2}} u_\tau\|^2_{L^2} = \tau^{2s} a \|(-\Delta)^{\frac{s}{2}} Q_0\|^2_{L^2} + O(\tau^{-d-6s})
\end{align}
as $\tau \rightarrow \infty$. Similarly,
\begin{align} \label{nonli-norm-test-function}
\|u_\tau\|^{\frac{4s}{d}+2}_{L^{\frac{4s}{d}+2}} = \tau^{2s} a^{\frac{2s}{d}+1}_\tau \|Q_0\|^{\frac{4s}{d}+2}_{L^{\frac{4s}{d}+2}} + O(\tau^{-d-6s})
\end{align}
as $\tau \rightarrow \infty$. By the definition of $I(a)$,
\begin{align}
\frac{I(a)}{a} \leq \frac{E(u_\tau)}{a} &= \frac{\tau^{2s}}{2} \left( \|(-\Delta)^{\frac{s}{2}} Q_0\|^2_{L^2} - a^{\frac{2s}{d}}\frac{ d}{d+2s} \|Q_0\|^{\frac{4s}{d}+2}_{L^{\frac{4s}{d}+2}} \right) + O(\tau^{-d-6s}) \nonumber\\
&=\tau^{2s}\frac{d}{4s}\left(1-\left(\frac{a}{a^*}\right)^{\frac{2s}{d}} \right) + O(\tau^{-d-6s}) \label{est-Ia-no-potential}
\end{align}
as $\tau \rightarrow \infty$. Here we have used \eqref{pohozaev-identity} to get 
\[
\frac{d}{2s}= \|(-\Delta)^{\frac{s}{2}} Q_0\|^2_{L^2} = \frac{d}{d+2s} \|Q_0\|^{\frac{4s}{d}+2}_{L^{\frac{4s}{d}+2}} \|Q\|^{\frac{4s}{d}}_{L^2}.
\]
It follows from \eqref{est-Ia-no-potential} that for $a>a^*$,
\[
I(a) \leq \lim_{\tau \rightarrow \infty} E(u_\tau) = -\infty.
\]
This shows that $I(a) =-\infty$ for $a>a^*$, hence there is no minimizer for $I(a)$ when $a>a^*$. 

We now consider the case $a=a^*$, we also have from \eqref{est-Ia-no-potential} that $I(a^*) \leq 0$ which together with $I(a^*)\geq 0$ (by the fractional Gagliardo-Nirenberg inequality) imply $I(a)=0$. In this case, we observe that $Q$ is a minimizer for $I(a^*)$ since $E(Q)=0$. If $u$ is another minimizer for $I(a^*)$, then $u$ is an optimizer for the fractional Gagliardo-Nirenberg inequality, hence by Theorem $\ref{theo-fractional-GN}$, $u(x) = \beta Q(\gamma(x+y))$ for some $\beta \in \C, \beta \ne 0, \gamma>0$ and $y \in \R^d$. Since $\|u\|_{L^2}^2= a^*=\|Q\|^2_{L^2}$, we infer that $|\beta|= \gamma^{\frac{d}{2}}$ or $u(x) = e^{i\theta} \gamma^{\frac{d}{2}} Q(\gamma x+x^0)$ for some $\theta \in \R, \gamma>0$ and $x^0 \in \R^d$.

{\bf The case $\frac{4s}{d}<\alpha<s^*$.} Let $u \in H^s(\R^d)$ be such that $\|u\|^2_{L^2} =a$. Let $u_\lambda$ be as in \eqref{scaling}. We see that $\|u_\lambda\|^2_{L^2}= \|u\|^2_{L^2}=a$ and
\[
E(u_\lambda) = \frac{\lambda^{2s}}{2} \|(-\Delta)^{\frac{s}{2}} u\|^2_{L^2} -\frac{\lambda^{\frac{d\alpha}{2}}}{\alpha+2} \|u\|^{\alpha+2}_{L^{\alpha+2}}.
\]
Since $\frac{d\alpha}{2}>2s$, we see that $E(u_\lambda) \rightarrow -\infty$ as $\lambda \rightarrow \infty$. This implies that $I(a) = -\infty$ and thus there is no minimizer for $I(a)$. 
\hfill $\Box$

\subsection{Periodic potential}
In this subsection, we show the existence and non-existence of minimizers for $I(a)$ in the case of periodic potentials.

\noindent {\bf Proof of Theorem $\ref{theo-existence}$.} 
{\bf The case $0<\alpha<\frac{4s}{d}$.} Let $u \in H^s(\R^d)$ be such that $\|u\|^2_{L^2} =a$. By the fractional Gagliardo-Nirenberg inequality and Young's inequality with $0<\frac{d\alpha}{2s} <2$ (see \eqref{GN-Young-app}), we see that
\begin{align} \label{ener-est-periodic-potential}
E(u) \geq \frac{1}{2}(1-\varep) \|(-\Delta)^{\frac{s}{2}} u\|^2_{L^2} + \frac{a}{2} \min V - C(\varep, \alpha, a).
\end{align}
Taking $0<\varep<1$, we see that $E(u)\geq \frac{a}{2} \min V  - C(\varep, \alpha,a)$ for all $u \in H^s(\R^d)$ with $\|u\|^2_{L^2} =a$. This shows that $I(a) >-\infty$. 

Let $x^0 \in \R^d$ and $\varphi$ be as in Lemma $\ref{lem-decay-est}$. Denote
\begin{align} \label{test-function-x0}
u_\tau(x) = A_\tau \tau^{\frac{d}{2}} \varphi(x-x^0) Q^\alpha_0(\tau(x-x^0)), \quad \tau>0,
\end{align}
where $Q^\alpha_0 = \frac{Q_\alpha}{\|Q_\alpha\|_{L^2}}$ and $A_\tau>0$ is such that $\|u_\tau\|^2_{L^2} =a$ for all $\tau>0$. By the same argument as in \eqref{est-A-tau}, we have that
\[
A_\tau^2 = a + O(\tau^{-d-4s})
\]
as $\tau \rightarrow \infty$. We also have from \eqref{sobo-norm-test-function} and \eqref{nonli-norm-test-function} that
\[
\|(-\Delta)^{\frac{s}{2}} u_\tau\|^2_{L^2} = \tau^{2s} a \|(-\Delta)^{\frac{s}{2}} Q^\alpha_0\|^2_{L^2} + O(\tau^{-d-6s})
\]
and
\[
\|u_\tau\|^{\alpha+2}_{L^{\alpha+2}} = \tau^{\frac{d\alpha}{2}} a^{\frac{\alpha}{2} +1} \|Q^\alpha_0\|^{\alpha+2}_{L^{\alpha+2}} + O(\tau^{-d-6s})
\]
as $\tau \rightarrow \infty$. On the other hand, we note that $\tau^d [Q^\alpha_0(\tau(x-x^0))]^2$ converges weakly to the Dirac delta function at $x^0$ when $\tau \rightarrow \infty$. Indeed, we take $\psi \in C^\infty_0(\R^d)$ and compute
\begin{align*}
\int \tau^d [Q^\alpha_0(\tau(x-x^0))]^2 \psi(x) dx &= \int \tau^d [Q^\alpha_0(\tau(x-x^0))]^2(\psi(x) - \psi(x^0)) dx + \int \tau^d [Q^\alpha_0(\tau(x-x^0))]^2 \psi(x^0) dx \\
&= \int [Q^\alpha_0(x)]^2 \left(\psi(\tau^{-1} x + x^0) - \psi(x^0) \right) dx + \psi(x^0)
\end{align*}
where we have used that $\|Q^\alpha_0\|^2_{L^2}=1$. Note that the integral tends to zero as $\tau \rightarrow \infty$ due to the dominated convergence. Since $x\mapsto V(x) [\varphi(x-x^0)]^2$ is integrable, it follows that
\[
\int V|u_\tau|^2 dx = A_\tau^2 \int V(x) [\varphi(x-x^0)]^2 \tau^d [Q^\alpha_0(\tau(x-x^0))]^2 dx \rightarrow a V(x^0)
\]
as $\tau \rightarrow \infty$ for almost everywhere $x^0 \in \R^d$. By the definition of $I(a)$, we have that
\begin{align}
\frac{I(a)}{a} \leq \frac{E(u_\tau)}{a} &= \frac{\tau^{2s}}{2} \|(-\Delta)^{\frac{s}{2}} Q^\alpha_0\|^2_{L^2} - \frac{\tau^{\frac{d\alpha}{2}}}{\alpha+2} a^{\frac{\alpha}{2}} \|Q^\alpha_0\|^{\alpha+2}_{L^{\alpha+2}} + \frac{1}{2} V(x^0) + o_\tau(1) \nonumber \\
&= \frac{d\alpha}{4s-(d-2s)\alpha} \left(\frac{\tau^{2s}}{2} - \frac{2s \tau^{\frac{d\alpha}{2}}}{d\alpha} \left(\frac{a}{\|Q_\alpha\|^2_{L^2}}\right)^{\frac{\alpha}{2}} \right) + \frac{1}{2} V(x^0) + o_\tau(1) \label{est-Ia-periodic-potential-sub}
\end{align}
as $\tau \rightarrow \infty$ for almost everywhere $x^0 \in \R^d$. Here we have used \eqref{pohozaev-identity-alpha} to get
\[
\frac{d\alpha}{4s-(d-2s)\alpha} = \|(-\Delta)^{\frac{s}{2}} Q^\alpha_0\|^2_{L^2} = \frac{d\alpha}{2s(\alpha+2)} \|Q_\alpha\|^\alpha_{L^2} \|Q^\alpha_0\|^{\alpha+2}_{L^{\alpha+2}}.
\]
Taking $\tau=\left(\frac{a}{\|Q_\alpha\|^2_{L^2}}\right)^{\frac{\alpha}{4s-d\alpha}}$ and noting that $\tau \rightarrow \infty$ as $a \rightarrow \infty$, we infer from \eqref{est-Ia-periodic-potential-sub} that
\begin{align} \label{est-Ia-periodic-potential-sub-1}
\frac{I(a)}{a} \leq - \frac{4s-d\alpha}{4s-(d-2s)\alpha} \left(\frac{a}{\|Q_\alpha\|^2_{L^2}} \right)^{\frac{2s\alpha}{4s-d\alpha}} + \frac{1}{2} V(x^0) + o_a(1)
\end{align}
as $a \rightarrow \infty$ for almost everywhere $x^0 \in \R^d$. Since $\frac{I(a)}{a} \rightarrow -\infty$ as $a \rightarrow \infty$, there exists $a_*>0$ large enough such that
\begin{align} \label{est-Ia-periodic-potential-sub-3}
\frac{I(a)}{a} < \frac{1}{2} \inf \sigma((-\Delta)^s +V)
\end{align}
for any $a>a_*$. We will show the existence of minimizers for $I(a)$ with $a>a_*$. Let $(u_n)_{n\geq 1}$ be a minimizing sequence for $I(a)$. By \eqref{ener-est-periodic-potential}, $(u_n)_{n\geq 1}$ is a bounded sequence in $H^s(\R^d)$. By the concentration-compactness principle given in Lemma $\ref{lem-concen-compact}$ (see also Remark $\ref{rem-concen-compact-all-k}$), there exists a subsequence $(u_{n_k})_{k\geq 1}$ satisfying one of the following three possibilities: vanishing, compactness and dichotomy. 

{\bf No vanishing.} If $(u_{n_k})_{k\geq 1}$ is vanishing, then by Remark $\ref{rem-concen-compact}$, $u_{n_k} \rightarrow 0$ strongly in $L^q(\R^d)$ for any $2<q<2^*$.  Thus 
\[
I(a) = \lim_{k\rightarrow \infty} E(u_{n_k}) = \liminf_{k\rightarrow \infty} \left(\frac{1}{2} \|(-\Delta)^{\frac{s}{2}} u_{n_k}\|^2_{L^2} + \frac{1}{2} \int V |u_{n_k}|^2 dx \right) \geq \frac{a}{2} \inf \sigma((-\Delta)^s +V)
\]
which contradicts to \eqref{est-Ia-periodic-potential-sub-3}. 

{\bf No dichotomy.} Assume the dichotomy occurs. Let $(u^1_k)_{k\geq 1}$ and $(u^2_k)_{k\geq 1}$ be the corresponding sequences in Lemma $\ref{lem-concen-compact}$. We first claim that
\begin{align} \label{ener-est-periodic-potential-1}
\liminf_{k\rightarrow \infty} E(u_{n_k}) - E(u^1_k) - E(u^2_k) \geq 0.
\end{align}
By Lemma $\ref{lem-concen-compact}$, we have that
\[
\liminf_{k\rightarrow \infty} \|(-\Delta)^{\frac{s}{2}} u_{n_k}\|^2_{L^2} -\|(-\Delta)^{\frac{s}{2}} u^1_k\|^2_{L^2}-\|(-\Delta)^{\frac{s}{2}} u^2_k\|^2_{L^2} \geq 0.
\]
Since $u^1_k$ and $u^2_k$ have disjoint supports for $k$ large and $\lim_{k\rightarrow \infty} \|u_{n_k} - u^1_k - u^2_k\|_{L^q} =0$ for $2 \leq q<2^*$, we see that
\[
\|u_{n_k}\|^{\alpha+2}_{L^{\alpha+2}} - \|u^1_k\|^{\alpha+2}_{L^{\alpha+2}} - \|u^2_k\|^{\alpha+2}_{L^{\alpha+2}} \rightarrow 0
\]
as $k\rightarrow \infty$. Since $V$ is bounded, 
\[
\int V(|u_{n_k}|^2 - |u^1_k|^2 - |u^2_k|^2) dx = \int V(|u_{n_k}|^2 - |u^1_k + u^2_k|^2) dx \rightarrow 0
\]
as $k\rightarrow \infty$. Collecting the above estimates, we prove \eqref{ener-est-periodic-potential-1}.

We next have for any $\lambda>0$ that
\begin{align*}
E(\lambda u) &= \lambda^2 \left( \frac{1}{2} \|(-\Delta)^{\frac{s}{2}} u\|^2_{L^2} + \frac{1}{2} \int V|u|^2 dx \right) - \frac{\lambda^{\alpha+2}}{\alpha+2} \|u\|^{\alpha+2}_{L^{\alpha+2}} \\
&=\lambda^{\alpha+2} E(u) + \lambda^2(1-\lambda^\alpha) \left(\frac{1}{2} \|(-\Delta)^{\frac{s}{2}} u\|^2_{L^2} + \frac{1}{2} \int V|u|^2 dx \right)
\end{align*}
or equivalently
\[
E(u) = \frac{1}{\lambda^{\alpha+2}} E(\lambda u) + \left(1-\frac{1}{\lambda^\alpha}\right) \left(\frac{1}{2} \|(-\Delta)^{\frac{s}{2}} u\|^2_{L^2} + \frac{1}{2} \int V|u|^2 dx \right).
\]
Set
\[
\lambda^1_k := \sqrt{\frac{a}{\|u^1_k\|^2_{L^2}}}, \quad \lambda^2_k:= \sqrt{\frac{a}{\|u^2_k\|^2_{L^2}}}.
\]
It follows that
\begin{align*}
E(u^1_k) &= \frac{1}{(\lambda^1_k)^{\alpha+2}} E(\lambda^1_k u^1_k) + \left(1-\frac{1}{(\lambda^1_k)^\alpha}\right) \left( \frac{1}{2} \|(-\Delta)^{\frac{s}{2}} u^1_k\|^2_{L^2} + \frac{1}{2} \int V|u^1_k|^2 dx\right) \\
&\geq \left(\frac{\|u^1_k\|^2_{L^2}}{a}\right)^{\frac{\alpha}{2}+1} I(a) + \|u^1_k\|^2_{L^2} \left(1- \left(\frac{\|u^1_k\|^2_{L^2}}{a}\right)^{\frac{\alpha}{2}} \right) \frac{1}{2} \inf \sigma((-\Delta)^s + V).
\end{align*}
Using the fact that $\|u^1_k\|^2_{L^2} \rightarrow \mu$ as $k \rightarrow \infty$, we see that
\[
E(u^1_k) \geq \left(\frac{\mu}{a}\right)^{\frac{\alpha}{2}+1} I(a) + \frac{\mu}{a}\left(1- \left(\frac{\mu}{a}\right)^{\frac{\alpha}{2}} \right) \frac{a}{2} \inf \sigma((-\Delta)^s + V) + o_k(1)
\]
as $k \rightarrow \infty$. Similar, since $\|u^2_k\|^2_{L^2} \rightarrow a-\mu$ as $k\rightarrow \infty$, 
\[
E(u^2_k) \geq \left(\frac{a-\mu}{a}\right)^{\frac{\alpha}{2}+1} I(a) + \frac{a-\mu}{a}\left(1- \left(\frac{a-\mu}{a}\right)^{\frac{\alpha}{2}} \right) \frac{a}{2} \inf \sigma((-\Delta)^s + V) + o_k(1)
\]
as $k \rightarrow \infty$. We infer from \eqref{ener-est-periodic-potential-1} that
\begin{align*}
E(u_{n_k}) &\geq E(u^1_k) + E(u^2_k) + o_k(1) \\
& \geq \left( \left(\frac{\mu}{a}\right)^{\frac{\alpha}{2} +1} + \left(\frac{a-\mu}{a}\right)^{\frac{\alpha}{2}+1} \right) I(a) \\
& \mathrel{\phantom{\geq}} + \left( 1- \left(\frac{\mu}{a}\right)^{\frac{\alpha}{2} +1} - \left(\frac{a-\mu}{a}\right)^{\frac{\alpha}{2}+1} \right) \frac{a}{2} \inf \sigma((-\Delta)^s+V) + o_k(1)
\end{align*}
as $k \rightarrow \infty$. Taking $k\rightarrow \infty$, we obtain
\[
I(a) \geq \left( \left(\frac{\mu}{a}\right)^{\frac{\alpha}{2} +1} + \left(\frac{a-\mu}{a}\right)^{\frac{\alpha}{2}+1} \right) I(a) + \left( 1- \left(\frac{\mu}{a}\right)^{\frac{\alpha}{2} +1} - \left(\frac{a-\mu}{a}\right)^{\frac{\alpha}{2}+1} \right) \frac{a}{2} \inf \sigma((-\Delta)^s+V).
\]
Since $0<\mu<a$, we obtain
\[
I(a) \geq \frac{a}{2} \inf \sigma((-\Delta)^s+V)
\]
which contradicts to \eqref{est-Ia-periodic-potential-sub-3}.

{\bf Compactness.} There thus exists a sequence $(y_k)_{k\geq 1} \subset \R^d$ such that up to a subsequence, $(u_{n_k}(\cdot + y_k))_{k\geq 1}$ converges weakly in $H^s(\R^d)$ and strongly in $L^q(\R^d)$ to some $u$ for all $2\leq q<2^*$. This implies that
\[
\|u\|^2_{L^2} = \lim_{k\rightarrow \infty} \|u_{n_k}(\cdot+y_k)\|^2_{L^2} =a
\]
and
\[
\|(-\Delta)^{\frac{s}{2}} u \|^2_{L^2} \leq \liminf_{k\rightarrow \infty} \|(-\Delta)^{\frac{s}{2}} u_{n_k}(\cdot+y_k)\|^2_{L^2} = \liminf_{k\rightarrow \infty} \|(-\Delta)^{\frac{s}{2}} u_{n_k}\|^2_{L^2}
\]
and
\[
\|u\|^{\alpha+2}_{L^{\alpha+2}} = \lim_{k\rightarrow \infty} \|u_{n_k}(\cdot+y_k)\|^{\alpha+2}_{L^{\alpha+2}} = \lim_{k\rightarrow \infty} \|u_{n_k}\|^{\alpha+2}_{L^{\alpha+2}}.
\]
We next have that
\begin{align*}
\int V(x)|u_{n_k}(x)|^2 dx &= \int V(x+y_k) |u_{n_k}(x+y_k)|^2 dx \\
&= \int V(x+y_k) |u(x)|^2 dx + \int V(x+y_k) \left( |u_{n_k}(x+y_k)|^2 - |u(x)|^2\right) dx.
\end{align*}
Note that
\begin{multline*}
\left| \int V(x+y_k) \left( |u_{n_k}(x+y_k)|^2 - |u(x)|^2 \right)  \right| \\
\leq \|V\|_{L^\infty} \|u_{n_k}(\cdot+y_k) - u\|_{L^2} \left(\|u_{n_k}(\cdot+y_k)\|_{L^2} + \|u\|_{L^2} \right) \rightarrow 0
\end{multline*}
as $k\rightarrow \infty$. This implies that
\begin{align*}
E(u_{n_k}) &= \frac{1}{2} \|(-\Delta)^{\frac{s}{2}} u_{n_k}\|^2_{L^2} +\frac{1}{2} \int V|u_{n_k}|^2 dx - \frac{1}{\alpha+2} \|u_{n_k}\|^{\alpha+2}_{L^{\alpha+2}} \\
&\geq \frac{1}{2} \|(-\Delta)^{\frac{s}{2}} u\|^2_{L^2} + \frac{1}{2} \int V(x+y_k) |u(x)|^2 dx - \frac{1}{\alpha+2} \|u\|^{\alpha+2}_{L^{\alpha+2}} + o_k(1)
\end{align*}
as $k \rightarrow \infty$. Since $V$ is periodic, we write
\[
y_k = x_k + z_k, \quad x_k \in [0,1]^d, z_k \in \Z^d.
\]
Since $(x_k)_{k\geq 1}$ is bounded in $\R^d$, up to a subsequence, $x_k \rightarrow x^0$ as $k\rightarrow \infty$. Thus
\[
\int V(x+y_k) |u(x)|^2 dx = \int V(x+x_k) |u(x)|^2 dx \rightarrow \int V(x+x^0) |u(x)|^2 dx
\]
as $k\rightarrow \infty$ by the dominated convergence. Hence
\begin{align*}
E(u_{n_k}) &\geq \frac{1}{2} \|(-\Delta)^{\frac{s}{2}} u\|^2_{L^2} + \frac{1}{2} \int V(x+x^0) |u(x)|^2 dx - \frac{1}{\alpha+2} \|u\|^{\alpha+2}_{L^{\alpha+2}} + o_k(1) \\
&= E(u(\cdot-x^0)) + o_k(1)
\end{align*}
as $k \rightarrow \infty$. Taking $k\rightarrow \infty$, we get $I(a) \geq E(u(\cdot-x^0))$. On the other hand, $\|u(\cdot-x^0)\|^2_{L^2} = \|u\|^2_{L^2} =a$ which implies that $I(a) \leq E(u(\cdot-x^0))$. This shows that $u(\cdot-x^0)$ is a minimizer for $I(a)$. This shows the existence of minimizers for $I(a)$ for $a>a_*$.

{\bf The case $\alpha=\frac{4s}{d}$.} We first claim that
\begin{align} \label{est-Ia-periodic-potential-cri-1}
\lim_{a \nearrow a^*} \frac{I(a)}{a} = \frac{I(a^*)}{a^*}= \frac{1}{2} \min V.
\end{align}
Let $u_\tau$ be as in \eqref{test-function-x0}. By the same argument as in the mass-subcritical case (see also \eqref{est-Ia-no-potential}), we have that
\begin{align*}
\|(-\Delta)^{\frac{s}{2}} u_\tau\|^2_{L^2} &= \tau^{2s} a \|(-\Delta)^{\frac{s}{2}} Q_0\|^2_{L^2} + O(\tau^{-d-6s}), \\
\|u_\tau\|^{\frac{4s}{d}+2}_{L^{\frac{4s}{d}+2}} &= \tau^{2s} a^{\frac{2s}{d}+1} \|Q_0\|^{\frac{4s}{d}+2}_{L^{\frac{4s}{d}+2}} + O(\tau^{-d-6s}),
\end{align*}
where $Q_0= \frac{Q}{\|Q\|_{L^2}}$ and
\[
\int V|u_\tau|^2 dx \rightarrow a V(x^0)
\]
as $\tau \rightarrow \infty$ for almost everywhere $x^0 \in \R^d$. It follows that
\begin{align}
\frac{I(a)}{a} \leq \frac{E(u_\tau)}{a} &= \frac{\tau^{2s}}{2} \left(\|(-\Delta)^{\frac{s}{2}} Q_0\|^2_{L^2} - a^{\frac{2s}{d}} \frac{d}{d+2s} \|Q_0\|^{\frac{4s}{d}+2}_{L^{\frac{4s}{d}+2}} \right) + \frac{1}{2} V(x^0) + o_\tau(1) \nonumber \\
&=\tau^{2s} \frac{d}{4s} \left( 1- \left(\frac{a}{a^*}\right)^{\frac{2s}{d}} \right) + \frac{1}{2} V(x^0) + o_\tau(1) \label{est-Ia-periodic-potential-cri-2}
\end{align}
as $\tau \rightarrow \infty$ for almost everywhere $x^0 \in \R^d$. Taking $\tau= \left(1-\left(\frac{a}{a^*}\right)^{\frac{2s}{d}} \right)^{-\frac{1}{4s}}$ so that $\tau \rightarrow \infty$ as $a \nearrow a^*$, we get
\begin{align} \label{est-Ia-periodic-cri}
\frac{I(a)}{a} \leq \frac{d}{4s} \left(1-\left(\frac{a}{a^*}\right)^{\frac{2s}{d}} \right)^{\frac{1}{2}} + \frac{1}{2} V(x^0) + o_{a \nearrow a^*}(1)
\end{align}
as $a \nearrow a^*$ for almost everywhere $x^0 \in \R^d$. Letting $a\nearrow a^*$ and optimizing the right hand side, we obtain
\[
\limsup_{a \nearrow a^*} \frac{I(a)}{a} \leq \frac{1}{2} \min V.
\]
On the other hand, by the fractional Gagliardo-Nirenberg inequality, we have for $\|u\|^2_{L^2}=a$ that
\begin{align*} 
E(u) \geq \frac{1}{2} \left(1- \left(\frac{a}{a^*}\right)^{\frac{2s}{d}} \right) \|(-\Delta)^{\frac{s}{2}} u\|^2_{L^2} + \frac{a}{2} \min V.
\end{align*}
Assume at the moment that $\frac{I(a)}{a}$ is a decreasing function in $a$. This implies that
\[
\frac{I(a)}{a} \geq \frac{I(a^*)}{a^*} \geq \frac{1}{2} \min V,
\]
hence
\[
\liminf_{a \nearrow a^*} \frac{I(a)}{a} \geq \frac{I(a^*)}{a^*} \geq \frac{1}{2} \min V
\]
which proves the claim. To see that $a \mapsto \frac{I(a)}{a}$ is a decreasing function, we take $0<a \leq b$. We will show that $\frac{I(a)}{a} \geq \frac{I(b)}{b}$. Let $u \in H^s(\R^d)$ be such that $\|u\|^2_{L^2} =a$ and set $\lambda = \sqrt{\frac{b}{a}} \geq 1$. We see that $\|\lambda u\|^2_{L^2}=b$ and 
\begin{align*}
E(\lambda u) &= \lambda^2 \left(\frac{1}{2} \|(-\Delta)^{\frac{s}{2}} u\|^2_{L^2} + \frac{1}{2} \int V|u|^2 dx \right) - \frac{\lambda^{\alpha+2}}{\alpha+2} \|u\|^{\alpha+2}_{L^{\alpha+2}} \\
&= \lambda^2 E(u) + \frac{\lambda^2(1-\lambda^\alpha)}{\alpha+2} \|u\|^{\alpha+2}_{L^{\alpha+2}} \leq \lambda^2 E(u).
\end{align*}
By the definition of $I(b)$, 
\[
I(b) \leq E(\lambda u) \leq \lambda^2 E(u).
\]
Taking the infimum over all $u \in H^s(\R^d)$ with $\|u\|^2_{L^2} =a$, we get $I(b) \leq \frac{b}{a} I(a)$ which shows that $\frac{I(a)}{a}$ is a decreasing function in $a$. 

It also follows from \eqref{est-Ia-periodic-potential-cri-2} that for $a>a^*$,
\[
\frac{I(a)}{a} \leq \lim_{\tau \rightarrow \infty} \frac{E(u_\tau)}{a} = -\infty
\]
which implies that there is no minimizer for $I(a)$.

We next show that there is no minimizer for $I(a^*)$. In fact, assume by contradiction that there exists a minimizer for $I(a^*)$, says $u$. Then by the fractional Gagliardo-Nirenberg inequality,
\begin{align*}
\frac{a^*}{2} \min V = I(a^*) &=E(u) = \frac{1}{2} \|(-\Delta)^{\frac{s}{2}} u\|^2_{L^2} +\frac{1}{2} \int V|u|^2 dx - \frac{d}{2(d+2s)} \|u\|^{\frac{4s}{d}+2}_{L^{\frac{4s}{d}+2}} \geq \frac{a^*}{2} \min V.
\end{align*}
This implies that
\begin{align} \label{no-minimizer-periodic-potential-cri-1}
\int V|u|^2 dx = a^* \min V
\end{align}
and
\begin{align} \label{no-minimizer-periodic-potential-cri-2}
\frac{1}{2} \|(-\Delta)^{\frac{s}{2}} u\|^2_{L^2} - \frac{d}{2(d+2s)} \|u\|^{\frac{4s}{d}+2}_{L^{\frac{4s}{d}+2}} =0.
\end{align}
By \eqref{no-minimizer-periodic-potential-cri-2}, $u$ is an optimizer for the fractional Gagliardo-Nirenberg inequality, hence $u$ is equal to $Q$ up to symmetries. In this case, \eqref{no-minimizer-periodic-potential-cri-1} cannot occur except $V$ is a constant, but it contradicts to the assumption $\min V <\inf \sigma((-\Delta)^s+V)$.

We next show the existence of minimizers for $I(a)$ when $a_*<a<a^*$ for some $0<a_*<a^*$. By \eqref{est-Ia-periodic-potential-cri-1} and the assumption $\min V<\inf \sigma((-\Delta)^s +V)$, there exists $0<a_*<a^*$ such that
\begin{align} \label{est-Ia-periodic-potential-cri-3}
\frac{I(a)}{a} <\frac{1}{2} \inf \sigma((-\Delta)^s+V)
\end{align}
for any $a_*<a<a^*$. Using \eqref{est-Ia-periodic-potential-cri-3}, we can repeat the same argument as in the mass-subcritical case to show the existence of minimizers for $I(a)$ when $a_*<a<a^*$.

{\bf The case $\frac{4s}{d}<\alpha<s^*$.} Let $a>0$. We take $u \in H^s(\R^d)$ such that $\|u\|^2_{L^2} =a$. Let $u_\lambda$ be as in \eqref{scaling}. We see that $\|u_\lambda\|^2_{L^2} = \|u\|^2_{L^2} =a$ and
\[
E(u_\lambda) = \frac{\lambda^{2s}}{2} \|(-\Delta)^{\frac{s}{2}} u\|^2_{L^2} +\frac{1}{2} \int V(\lambda^{-1} x) |u(x)|^2 dx - \frac{\lambda^{\frac{d\alpha}{2}}}{\alpha+2} \|u\|^{\alpha+2}_{L^{\alpha+2}}.
\]
Since $\frac{d\alpha}{2} >2s$, we see that the right hand side tends to $-\infty$ as $\lambda \rightarrow \infty$. Note that the second term in the right hand side is bounded due to the fact that $V$ is bounded. This shows that $I(a)=-\infty$ and there is no minimizer for $I(a)$. The proof is complete.
\hfill $\Box$

\section{Blow-up behavior of minimizers} \label{section-blowup}
\setcounter{equation}{0}

In this section, we study the blow-up behavior of minimizers for $I(a)$ in the mass-critical case given in Theorem $\ref{theo-blowup}$ and Theorem $\ref{theo-blowup-refined}$.

\noindent {\bf Proof of Theorem $\ref{theo-blowup}$.} We first prove that $u_a$ blows up as $a \nearrow a^*$ in the sense of \eqref{blowup-meaning}. Assume that it is not true, then $(u_a)_{a \nearrow a^*}$ is a bounded sequence in $H^s(\R^d)$. Applying the concentration-compactness principle with the fact $\|u_a\|^2_{L^2} =a \nearrow a^*$ as $a \nearrow a^*$, there exists a subsequence still denoted by $(u_a)_{a \nearrow a^*}$ satisfying one of the three posibilities: vanishing, dichotomy and compactness. Using the same argument as in the proof of Theorem $\ref{theo-existence}$ together with the fact (see \eqref{est-Ia-periodic-potential-cri-1})
\[
\frac{I(a^*)}{a^*} = \frac{1}{2} \min V <\frac{1}{2} \inf \sigma((-\Delta)^s+V),
\]
we see that the vanishing and dichotomy cannot occur. Thus the compactness must occur, and there thus exist $x^0 \in [0,1]^d$ and $u \in H^s(\R^d)$ such that $u(\cdot-x^0)$ is a minimizer for $I(a^*)$ which is a contradiction.

Let $\varep_a$ be as in \eqref{blowup-meaning}. We define
\[
v_a(x):= \varep_a^{\frac{d}{2}} u_a(\varep_a x).
\]
It is easy to see that
\[
\|v_a\|^2_{L^2} =\|u_a\|^2_{L^2} =a, \quad \|(-\Delta)^{\frac{s}{2}} v_a\|^2_{L^2} = \varep_a^{2s} \|(-\Delta)^{\frac{s}{2}} u_a\|^2_{L^2} =1.
\]
By the fractional Gagliardo-Nirenberg inequality and  \eqref{est-Ia-periodic-potential-cri-1},
\begin{align*}
0 &\leq \|(-\Delta)^{\frac{s}{2}} v_a\|^2_{L^2}  - \frac{d}{d+2s} \|v_a\|^{\frac{4s}{d}+2}_{L^{\frac{4s}{d}+2}} \\
&=\varep_a^{2s} \left(\|(-\Delta)^{\frac{s}{2}} u_a\|^2_{L^2} - \frac{d}{d+2s} \|u_a\|^{\frac{4s}{d}+2}_{L^{\frac{4s}{d}+2}} \right) \\
&= \varep_a^{2s} \left( 2E(u_a)  -  \int V |u_a|^2 dx \right) \\
&\leq  \varep_a^{2s} (2I(a) - a\min V) \rightarrow 0
\end{align*}
as $a \nearrow a^*$ which implies
\begin{align*} 
\lim_{a \nearrow a^*} \|(-\Delta)^{\frac{s}{2}} v_a\|^2_{L^2}  - \frac{d}{d+2s} \|v_a\|^{\frac{4s}{d}+2}_{L^{\frac{4s}{d}+2}} =0.
\end{align*}
By Lemma $\ref{lem-compact-mini-fGN}$, there exist a subsequence still denoted by $(v_a)_{a \nearrow a^*}$ and a sequence $(y_a)_{a \nearrow a^*} \subset \R^d$ such that 
\[
v_a(\cdot +y_a) \rightarrow \gamma^{\frac{d}{2}} Q(\gamma \cdot) \text{ strongly in } H^s(\R^d)
\]
for some $\gamma>0$ as $a \nearrow a^*$. Since $\|(-\Delta)^{\frac{s}{2}} v_a\|_{L^2}=1$, it follows that $\gamma=1$. We next write 
\begin{align} \label{defi-xa-ya}
\varep_a y_a = x_a + z_a, \quad x_a \in [0,1]^d, z_a \in \Z^d.
\end{align}
Since $(x_a)_{a \nearrow a^*}$ is bounded in $\R^d$, up to a subsequence, $x_a \rightarrow x^0 \in [0,1]^d$ as $a \nearrow a^*$. We will show that $V(x^0) = \min V$. Indeed, by the fractional Gagliardo-Nirenberg inequality and \eqref{est-Ia-periodic-potential-cri-1}, 
\[
\min V \leq \frac{1}{a} \int V(x)|u_a(x)|^2 dx \leq \frac{2I(a)}{a} \rightarrow \min V
\]
as $a \nearrow a^*$. By the Fatou's lemma,
\begin{align}
\min V = \lim_{a \nearrow a^*} \frac{1}{a} \int V(x)|u_a(x)|^2 dx &= \lim_{a \nearrow a^*} \frac{1}{a} \int V(\varep_a x) |v_a(x)|^2 dx \nonumber \\
&= \lim_{a \nearrow a^*} \frac{1}{a} \int V(\varep_a x+\varep_a y_a) |v_a(x+y_a)|^2 dx \nonumber \\
&= \lim_{a \nearrow a^*} \frac{1}{a} \int V(\varep_a x+x_a) |v_a(x+y_a)|^2 dx \nonumber \\
&\geq \int \lim_{a \nearrow a^*} \frac{1}{a} V(\varep_a x+x_a) |v_a(x+y_a)|^2 dx \nonumber \\
&= V(x^0) \frac{1}{a^*} \int [Q(x)]^2 dx = V(x^0) \label{V-x0}
\end{align}
which implies that $V(x^0) = \min V$. In the last equality, we have used the fact that up to a subsequence, $v_a(\cdot+y_a)$ converges to $Q$ almost everywhere. 
\hfill $\Box$

The rest of this section is devoted to the proof of Theorem $\ref{theo-blowup-refined}$. Before giving the proof, we need the following lemmas.

\begin{lemma} \label{lem-energy-est-cri}
	Let $d\geq 1$, $0<s<1$ and $V \in C(\R^d)$ satisfy \emph{(V1)}, \emph{(V2)} and \emph{(V3)}. Then there exist positive constants $C_1<C_2$ independent of $a$ such that as $a \nearrow a^*$,
	\begin{align} \label{energy-est-cri}
	C_1 \beta_a^{\frac{p}{2s+p}} \leq \frac{I(a)}{a} \leq C_2 \beta_a^{\frac{p}{2s+p}},
	\end{align}
	where $\beta_a$ is given in \eqref{blowup-scaling}.
\end{lemma}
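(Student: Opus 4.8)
The plan is to prove the two inequalities in \eqref{energy-est-cri} separately: the upper bound from a refined test-function computation, and the lower bound from the fractional Gagliardo--Nirenberg inequality combined with the concentration of minimizers established in Theorem \ref{theo-blowup}.

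\emph{Upper bound.} I would use the test function of \eqref{test-function-x0} centered exactly at the minimum point $x_0$ of $V$, i.e. $u_\tau(x)=A_\tau\tau^{d/2}\varphi(x-x_0)Q_0(\tau(x-x_0))$ with $\|u_\tau\|_{L^2}^2=a$. The estimates \eqref{sobo-norm-test-function}, \eqref{nonli-norm-test-function} and the Pohozaev identity \eqref{pohozaev-identity} give, exactly as in \eqref{est-Ia-periodic-potential-cri-2},
\[
\frac1a\Bigl(\tfrac12\|(-\Delta)^{\frac s2}u_\tau\|_{L^2}^2-\tfrac{d}{2(d+2s)}\|u_\tau\|_{L^{\frac{4s}d+2}}^{\frac{4s}d+2}\Bigr)=\tau^{2s}\frac{d}{4s}\beta_a+O(\tau^{-d-6s}).
\]
The new ingredient is a sharp estimate of the potential term. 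Writing $\int V|u_\tau|^2\,dx=A_\tau^2\int V(\tau^{-1}y+x_0)[\varphi(\tau^{-1}y)]^2Q_0^2(y)\,dy$ and using (V3) in the form $V(x)/|x-x_0|^p\to\kappa$, dominated convergence (with dominating function $C|y|^pQ_0^2(y)$, integrable precisely because $p<d+4s$ by the decay \eqref{decay-1} of $Q$) yields $\int V|u_\tau|^2\,dx=\tau^{-p}(a\kappa\mathcal I+o_\tau(1))$, where $\mathcal I:=\int_{\R^d}|y|^pQ_0^2(y)\,dy$. Hence $E(u_\tau)/a=\tfrac{d}{4s}\tau^{2s}\beta_a+\tfrac12\kappa\mathcal I\,\tau^{-p}+o(\tau^{-p})+O(\tau^{-d-6s})$. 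Choosing $\tau=\tau_a$ that minimizes the leading part $\tfrac{d}{4s}\tau^{2s}\beta_a+\tfrac12\kappa\mathcal I\,\tau^{-p}$ (so $\tau_a$ is of order $\beta_a^{-1/(2s+p)}$ and the minimum is of order $\beta_a^{p/(2s+p)}$) and using $p<d+4s<d+6s$ to absorb the $O(\tau_a^{-d-6s})$ error, I obtain $I(a)/a\le E(u_{\tau_a})/a\le C_2\beta_a^{p/(2s+p)}$ for $a$ near $a^*$.

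\emph{Lower bound.} For $a$ near $a^*$ the minimizer $u_a$ exists and $I(a)=E(u_a)$. The fractional Gagliardo--Nirenberg inequality together with $C_{\text{opt}}=\frac{d+2s}{d}\|Q\|_{L^2}^{-4s/d}$ gives
\[
I(a)=E(u_a)\ge\tfrac12\bigl(1-(a/a^*)^{2s/d}\bigr)\|(-\Delta)^{\frac s2}u_a\|_{L^2}^2+\tfrac12\int V|u_a|^2\,dx=\tfrac12\beta_a\varep_a^{-2s}+\tfrac12\int V|u_a|^2\,dx.
\]
The heart of the argument is the claim $\liminf_{a\nearrow a^*}\varep_a^{-p}\int V|u_a|^2\,dx=:c_0>0$. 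To prove it, suppose it fails; along a subsequence $\varep_a^{-p}\int V|u_a|^2\,dx\to0$, and by Theorem \ref{theo-blowup} (after a further subsequence) $w_a:=\varep_a^{d/2}u_a(\varep_a\cdot+x_a+z_a)\to Q$ in $H^s$ with $x_a\to x_0$ (modulo $\Z^d$), and either $d_a:=(x_a-x_0)/\varep_a$ converges or $|d_a|\to\infty$. On a fixed ball $B_M$ one has $\varep_a x+x_a-x_0\to0$ uniformly, so by (V3), for any $\eta>0$ and $a$ close enough, $V(\varep_a x+x_a)\ge(\kappa-\eta)\varep_a^p|x+d_a|^p$ on $B_M$; hence $\varep_a^{-p}\int V|u_a|^2\,dx\ge(\kappa-\eta)\int_{B_M}|x+d_a|^p|w_a(x)|^2\,dx$. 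If $d_a\to d_0$ this lower bound tends to $(\kappa-\eta)\int_{B_M}|x+d_0|^pQ^2\,dx>0$; if $|d_a|\to\infty$, taking $M=1$ and noting $|\varep_a x+x_a-x_0|\ge\tfrac12\varep_a|d_a|$ on $B_1$ for $a$ close, the lower bound is $\ge(\kappa-\eta)2^{-p}|d_a|^p\int_{B_1}|w_a|^2\,dx\to\infty$. Either way this contradicts $\varep_a^{-p}\int V|u_a|^2\,dx\to0$, so $c_0>0$. Consequently, for $a$ near $a^*$, $I(a)\ge\tfrac12\bigl(\beta_a\varep_a^{-2s}+\tfrac{c_0}2\varep_a^p\bigr)\ge\tfrac12\min_{t>0}\bigl(\beta_at^{-2s}+\tfrac{c_0}2t^p\bigr)=c_1\beta_a^{p/(2s+p)}$ for an explicit $c_1>0$, and dividing by $a<a^*$ gives $I(a)/a\ge C_1\beta_a^{p/(2s+p)}$.

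The main obstacle is the lower bound on the potential energy $\int V|u_a|^2\,dx$. The concentration scale $\varep_a$ and the convergence $x_a\to x_0$ come from Theorem \ref{theo-blowup}, but Lemma \ref{lem-energy-est-cri} precedes Theorem \ref{theo-blowup-refined}, so one does not yet control the rate at which $x_a\to x_0$; the dichotomy on $d_a$ circumvents this, the point being that the scenario $|x_a-x_0|\gg\varep_a$ only enlarges $\int V|u_a|^2\,dx$. A secondary technical point is the role of $p<d+4s$: it guarantees that $|\cdot|^pQ_0^2$ is integrable (needed in the upper-bound computation) and that $d+6s>p$ (so the error terms from Lemma \ref{lem-decay-est} are negligible compared with $\beta_a^{p/(2s+p)}$).
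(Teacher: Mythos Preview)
Your proof is correct and follows essentially the same strategy as the paper's: a test function for the upper bound, and Gagliardo--Nirenberg plus the concentration from Theorem~\ref{theo-blowup} for the lower bound. Two organizational differences are worth noting. First, your upper-bound computation is more explicit than the paper's: you quantify the potential contribution as $\tau^{-p}(a\kappa\mathcal I+o(1))$ via (V3) and dominated convergence, whereas the paper simply invokes \eqref{est-Ia-periodic-potential-cri-2} with $V(x_0)=0$, writes the potential term as $o_\tau(1)$, and sets $\tau=C\beta_a^{-1/(2s+p)}$ without tracking the rate of that $o_\tau(1)$ (the precise $\tau^{-p}$ rate is only computed later, in the proof of Theorem~\ref{theo-blowup-refined}). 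Second, for the lower bound the paper first proves as a separate claim that $|x_a-x_0|/\varep_a$ stays bounded, arguing by contradiction \emph{using the already-established upper bound}: if $|d_a|\to\infty$ then Fatou and (V3) give $I(a)\ge C(p,s)N^{2sp/(2s+p)}\beta_a^{p/(2s+p)}$ for arbitrarily large $N$, contradicting $I(a)/a\le C_2\beta_a^{p/(2s+p)}$. Your contradiction is set up differently---assuming $\varep_a^{-p}\int V|u_a|^2\to0$ along a subsequence---so the case $|d_a|\to\infty$ forces this quantity to $+\infty$ directly, and you never need to feed the upper bound back into the lower-bound argument. The two routes are equivalent in substance; yours is slightly more self-contained.
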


\begin{proof}
	Taking $x^0\equiv x_0$ with $x_0$ in (V3), we have from \eqref{est-Ia-periodic-potential-cri-2} that
	\[
	\frac{I(a)}{a} \leq \frac{d}{4s}\tau^{2s} \beta_a + o_\tau(1)
	\]
	as $\tau \rightarrow \infty$. The upper bound follows by taking $\tau = C\beta_a^{-\frac{1}{2s+p}}$ for some constant $C>0$. Note that the constant $C_2$ in \eqref{energy-est-cri} can be made as large as we want by enlarging the constant $C$.
	
	To see the lower bound, we need the following claim.
	\begin{claim} \label{claim}
	Let $d\geq 1$, $0<s<1$ and $V$ satisfy \emph{(V1)}, \emph{(V2)} and \emph{(V3)}. Let $\varep_a$ and $y_a$ be as in the proof of Theorem $\ref{theo-blowup}$ and write
	\[
	\varep_a  y_a = x_a + z_a, \quad x_a \in [0,1]^d, \quad z_a \in \Z^d.
	\]
	Then there exists a subsequence still denoted by $(x_a)_{a \nearrow a^*}$ such that $x_a \rightarrow x_0$, where $x_0$ is given in \emph{(V3)}. Moreover, there exists $C>0$ indenpendent of $a$ such that
	\begin{align} \label{blowup-periodic-cri-proof-1}
	\lim_{a \nearrow a^*} \varep_a^{-p} \int_{\R^d} V(x)|u_a(x)|^2 dx \geq C,
	\end{align}
	where $p$ is given in \emph{(V3)}.
	\end{claim}
	\noindent {\it Proof of Claim $\ref{claim}$.}
	Since $(x_a)_{a \nearrow a^*}$ is bounded in $\R^d$, up to a subsequence, $x_a \rightarrow x^0$ for some $x^0 \in [0,1]^d$ as $a \nearrow a^*$. We need to show that $x^0 \equiv x_0$, i.e. $V(x^0)=0$ which is done by the same argument as in \eqref{V-x0}. It remains to prove \eqref{blowup-periodic-cri-proof-1}. To this end, we use Fatou's lemma and the assumption (V3) to get
	\begin{align}  \label{fatou-app}
	\begin{aligned}
	\lim_{a \nearrow a^*} \varep_a^{-p} \int V(x) |u_a(x)|^2 dx & = \lim_{a \nearrow a^*} \varep_a^{-p} \int V(\varep_a x + x_a) |v_a(x +y_a)|^2 dx \\
	&\geq \int \lim_{a \nearrow a^*} \varep_a^{-p} V(\varep_a x + x_a) |v_a(x +y_a)|^2 dx \\
	& = \kappa \int \lim_{a \nearrow a^*} \left| x + \frac{x_a-x_0}{\varep_a}\right|^p [Q(x)]^2 dx.
	\end{aligned}
	\end{align}
	We will show that $\frac{x_a-x_0}{\varep_a}$ is uniformly bounded as $a \nearrow a^*$. Indeed, if $\left|\frac{x_a-x_0}{\varep_a}\right| \rightarrow \infty$ as $a \nearrow a^*$, then for any $N>0$ large, $\left|\frac{x_a-x_0}{\varep_a}\right| \geq N$ for $a$ close to $a^*$. It follows that
	\begin{align*}
	\lim_{a \nearrow a^*} \varep_a^{-p} \int V(x) |u_a(x)|^2 dx &\geq \kappa \int_{|x| \leq \frac{N}{2}} \lim_{a \nearrow a^*} \left|x+ \frac{x_a-x_0}{\varep_a}\right|^p [Q(x)]^2 dx \\
	&\geq \kappa \left(\frac{N}{2}\right)^p \int_{|x| \leq \frac{N}{2}} [Q(x)]^2 dx = C N^p,
	\end{align*}
	for some $C>0$ indenpendent of $N$. Using the fact $\|(-\Delta)^{\frac{s}{2}} v_a\|_{L^2}=1$ and $\|v_a\|^2_{L^2}=a$, we infer that as $a \nearrow a^*$,
	\begin{align}
	I(a) = E(u_a) &= \frac{1}{2} \|(-\Delta)^{\frac{s}{2}} u_a\|^2_{L^2} - \frac{d}{2(d+2s)} \|u_a\|^{\frac{4s}{d}+2}_{L^{\frac{4s}{d}+2}} + \frac{1}{2} \int V(x) |u_a(x)|^2 dx \nonumber \\
	& = \frac{1}{2} \varep_a^{-2s} \left( \|(-\Delta)^{\frac{s}{2}} v_a\|^2_{L^2} - \frac{d}{d+2s} \|v_a\|^{\frac{4s}{d}+2}_{L^{\frac{4s}{d}+2}} \right) + \frac{1}{2} \int V(x) |u_a(x)|^2 dx \nonumber \\
	& \geq \frac{1}{2} \varep_a^{-2s} \left(1- \left(\frac{a}{a^*} \right)^{\frac{2s}{d}} \right) + C \varep_a^p N^p \nonumber \\
	&= \frac{1}{2} \varep_a^{-2s} \beta_a + C \varep_a^p N^p \nonumber \\
	&\geq C(p, s) N^{\frac{2sp}{2s+p}} \beta_a^{\frac{p}{2s+p}}, \label{blowup-periodic-cri-proof-2}
	\end{align}
	where the last inequality follows by Young's inequality. This however contradicts to the upper bound in \eqref{energy-est-cri} by taking $N>0$ sufficiently large. It follows that $\left|\frac{x_a -x_0}{\varep_a}\right|$ is uniformly bounded as $a \nearrow a^*$, says $\lim_{a \nearrow a^*} \left|\frac{x_a -x_0}{\varep_a}\right| \leq C$. We see that
	\begin{align*}
	\lim_{a \nearrow a^*} \varep_a^{-p} \int V(x)|u_a(x)|^2 dx \geq \kappa \int_{|x|\geq 2C} \lim_{a \nearrow a^*} \left| x+\frac{x_a-x_0}{\varep_a}\right|^p [Q(x)]^2 dx \geq C
	\end{align*}
	for some $C>0$ independent of $a$. Note that the constant $C>0$ may change from line to line.	
	\hfill $\Box$

	We now prove the lower bound in \eqref{energy-est-cri}. By \eqref{blowup-periodic-cri-proof-1} and the same argument as \eqref{blowup-periodic-cri-proof-2}, we have that for $a\nearrow a^*$,
	\[
	I(a) \geq C\beta_a^{\frac{p}{2s+p}}.
	\]
	The lower bound follows by taking $C_1 = \frac{C}{a^*}$.
\end{proof}

\begin{lemma}
	Let $V \in C(\R^d)$ satisfy \emph{(V1)}, \emph{(V2)} and \emph{(V3)}. Let $u_a$ be a minimizer for $I(a)$ with $a_*<a<a^*$. Then there exist constants $C_3, C_4$ independent of $a$ such that as $a \nearrow a^*$,
	\begin{align} \label{nonlinear-est-cri}
	\frac{\|(-\Delta)^{\frac{s}{2}} u_a\|^2_{L^2}}{a} \leq C_3 \beta_a^{-\frac{2s}{2s+p}}, \quad C_4 \beta_a^{-\frac{2s}{2s+p}} \leq \frac{\|u_a\|^{\frac{4s}{d}+2}_{L^{\frac{4s}{d}+2}}}{a^{\frac{2s}{d}+1}}.
	\end{align}
\end{lemma}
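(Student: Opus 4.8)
The plan is to read both estimates off a single scaled energy identity, feeding in the two-sided bound on $I(a)/a$ from Lemma~\ref{lem-energy-est-cri} and the coercivity of the potential from Claim~\ref{claim}. With $\varepsilon_a$ and $v_a(x)=\varepsilon_a^{d/2}u_a(\varepsilon_a x)$ as in the proof of Theorem~\ref{theo-blowup}, recall $\|v_a\|_{L^2}^2=a$, $\|(-\Delta)^{\frac{s}{2}}v_a\|_{L^2}^2=1$, and that the proof of Theorem~\ref{theo-blowup} already shows
\[
\|(-\Delta)^{\frac{s}{2}}v_a\|_{L^2}^2-\frac{d}{d+2s}\|v_a\|_{L^{\frac{4s}{d}+2}}^{\frac{4s}{d}+2}\longrightarrow 0,
\qquad\text{so}\qquad
\|v_a\|_{L^{\frac{4s}{d}+2}}^{\frac{4s}{d}+2}\longrightarrow\frac{d+2s}{d}>0
\]
as $a\nearrow a^*$; in particular $\|v_a\|_{L^{\frac{4s}{d}+2}}^{\frac{4s}{d}+2}\ge\frac{d+2s}{2d}$ for $a$ close to $a^*$. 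Scaling $I(a)=E(u_a)$ gives the identity
\[
I(a)=\frac12\,\varepsilon_a^{-2s}\Big(\|(-\Delta)^{\frac{s}{2}}v_a\|_{L^2}^2-\frac{d}{d+2s}\|v_a\|_{L^{\frac{4s}{d}+2}}^{\frac{4s}{d}+2}\Big)+\frac12\int_{\R^d}V|u_a|^2\,dx,
\]
in which both terms on the right are non-negative (the first by the fractional Gagliardo--Nirenberg inequality, the second since $\min V=0$ under (V3)). Everything below is extracted from this identity.

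First I would prove the upper bound in \eqref{nonlinear-est-cri}. Applying the fractional Gagliardo--Nirenberg inequality directly to $u_a$, with $\|u_a\|_{L^2}^2=a$ and $C_{\text{opt}}=\frac{d+2s}{d}(a^*)^{-2s/d}$, gives $\frac{d}{2(d+2s)}\|u_a\|_{L^{\frac{4s}{d}+2}}^{\frac{4s}{d}+2}\le\frac12\big(\frac{a}{a^*}\big)^{2s/d}\|(-\Delta)^{\frac{s}{2}}u_a\|_{L^2}^2$; dropping the non-negative potential term then yields $I(a)\ge\frac12\beta_a\|(-\Delta)^{\frac{s}{2}}u_a\|_{L^2}^2$. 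Combining with $I(a)\le C_2 a\,\beta_a^{p/(2s+p)}$ from Lemma~\ref{lem-energy-est-cri} and using $\frac{p}{2s+p}-1=-\frac{2s}{2s+p}$, I obtain $\|(-\Delta)^{\frac{s}{2}}u_a\|_{L^2}^2/a\le 2C_2\,\beta_a^{-2s/(2s+p)}$, so $C_3=2C_2$ works.

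For the lower bound I would first use the scaling identity $\|u_a\|_{L^{\frac{4s}{d}+2}}^{\frac{4s}{d}+2}=\varepsilon_a^{-2s}\|v_a\|_{L^{\frac{4s}{d}+2}}^{\frac{4s}{d}+2}\ge\frac{d+2s}{2d}\varepsilon_a^{-2s}$ for $a$ near $a^*$, which reduces matters to a bound of the form $\varepsilon_a^{-2s}\ge c\,\beta_a^{-2s/(2s+p)}$. To obtain it I feed Claim~\ref{claim} into the displayed identity: non-negativity of the Gagliardo--Nirenberg deficit of $v_a$ forces $\int_{\R^d}V|u_a|^2\,dx\le 2I(a)$, while Claim~\ref{claim} gives $\int_{\R^d}V|u_a|^2\,dx\ge c_0\,\varepsilon_a^{\,p}$ for $a$ close to $a^*$; hence $\varepsilon_a^{\,p}\le(2/c_0)I(a)\le(2a^*C_2/c_0)\,\beta_a^{p/(2s+p)}$, so $\varepsilon_a\le C\,\beta_a^{1/(2s+p)}$ and therefore $\varepsilon_a^{-2s}\ge C^{-2s}\,\beta_a^{-2s/(2s+p)}$. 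Combining this with the previous display and $a^{2s/d+1}\le(a^*)^{2s/d+1}$ produces the constant $C_4$.

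The scalings and Gagliardo--Nirenberg bookkeeping are routine; the one genuinely load-bearing step is the lower bound $\varepsilon_a^{-2s}\gtrsim\beta_a^{-2s/(2s+p)}$, which cannot be obtained from Lemma~\ref{lem-energy-est-cri} alone — those estimates only control $\varepsilon_a^{-2s}\beta_a$ from above, and the deficit $1-\frac{d}{d+2s}\|v_a\|_{L^{\frac{4s}{d}+2}}^{\frac{4s}{d}+2}$ could a priori decay faster than $\beta_a$. It is precisely the coercivity of $V$ near its zero set encoded in Claim~\ref{claim} that pins $\varepsilon_a$ down to the blow-up scale $\beta_a^{1/(2s+p)}$, and hence forces the sharp lower rate for the nonlinear term.
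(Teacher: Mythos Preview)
Your proof is correct. The upper bound argument is essentially the paper's (you estimate $\|(-\Delta)^{s/2}u_a\|_{L^2}^2$ directly from $I(a)\ge\tfrac12\beta_a\|(-\Delta)^{s/2}u_a\|_{L^2}^2$, while the paper first bounds the nonlinear term by $I(a)/\beta_a$ and then substitutes back; these are equivalent).

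For the lower bound the two proofs take genuinely different routes. You invoke Claim~\ref{claim} to get $\int V|u_a|^2\gtrsim\varepsilon_a^{\,p}$, combine with $\int V|u_a|^2\le 2I(a)\lesssim\beta_a^{p/(2s+p)}$ to pin down $\varepsilon_a\lesssim\beta_a^{1/(2s+p)}$, and then use the convergence $\|v_a\|_{L^{\frac{4s}{d}+2}}^{\frac{4s}{d}+2}\to\frac{d+2s}{d}$ from Theorem~\ref{theo-blowup}. The paper instead uses a purely variational comparison: set $\lambda=\sqrt{b/a}$ with $b^{2s/d}=a^{2s/d}-\rho^{2s/d}\beta_a$, expand $E(\lambda u_a)=\lambda^2E(u_a)+\frac{\lambda^2(1-\lambda^{4s/d})}{\frac{4s}{d}+2}\|u_a\|_{L^{\frac{4s}{d}+2}}^{\frac{4s}{d}+2}$, and bound $E(\lambda u_a)\ge I(b)$; the resulting inequality $\rho^{2s/d}\beta_a\|u_a\|_{L^{\frac{4s}{d}+2}}^{\frac{4s}{d}+2}/a^{\frac{2s}{d}+1}\gtrsim I(b)/b-I(a)/a$ is then closed using both sides of \eqref{energy-est-cri} with $\rho$ large. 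So your closing assertion that the lower bound ``cannot be obtained from Lemma~\ref{lem-energy-est-cri} alone'' is in fact not true: the paper's argument uses only the two-sided energy estimate and never touches Claim~\ref{claim} or the qualitative blow-up of Theorem~\ref{theo-blowup}. Your route has the virtue of identifying the sharp blow-up scale $\varepsilon_a\sim\beta_a^{1/(2s+p)}$ explicitly along the way; the paper's route is more self-contained and shows a useful monotonicity/comparison trick for the constrained functional.
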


\begin{proof}
	Using the fractional Gagliardo-Nirenberg inequality and the fact $V\geq 0$, we see that
	\begin{align*}
	I(a) = E(u_a) &\geq \frac{d}{2(d+2s)} \left( \left(\frac{a_*}{a}\right)^{\frac{2s}{d}} -1 \right) \|u_a\|^{\frac{4s}{d}+2}_{L^{\frac{4s}{d}+2}} \\
	&= \frac{d}{2(d+2s)} \frac{\beta_a}{1-\beta_a} \|u_a\|^{\frac{4s}{d}+2}_{L^{\frac{4s}{d}+2}} \\
	&\geq \frac{d}{2(d+2s)} \beta_a \|u_a\|^{\frac{4s}{d}+2}_{L^{\frac{4s}{d}+2}}.
	\end{align*}
	It follows from the upper bound in \eqref{energy-est-cri} and $V\geq 0$ that
	\begin{align*}
	\frac{\|(-\Delta)^{\frac{s}{2}} u_a\|^2_{L^2}}{a} &\leq 2\frac{I(a)}{a} + \frac{d}{d+2s} \frac{\|u_a\|^{\frac{4s}{d}+2}_{L^{\frac{4s}{d}+2}}}{a} \\
	&\leq 2\frac{I(a)}{a \beta_a} (\beta_a+1) \leq 4 C_2 \beta_a^{-\frac{2s}{2s+p}}
	\end{align*}
	which proves the first estimate in \eqref{nonlinear-est-cri}.
	
	Let $0<b<a$ be such that $b^{\frac{2s}{d}} = a^{\frac{2s}{d}} - \rho^{\frac{2s}{d}} \beta_a$ for some $\rho>0$ to be chosen later. Note that as $a \nearrow a^*$, $b$ stays close to $a$. Now set $\lambda = \sqrt{\frac{b}{a}}$. By the choice of $b$, we have 
	\[
	1 -\lambda^{\frac{4s}{d}} = \left(\frac{\rho}{a}\right)^{\frac{2s}{d}} \beta_a.
	\]
	This together with
	\[
	E(\lambda u_a) = \lambda^2 E(u_a) +\frac{\lambda^2\left(1-\lambda^{\frac{4s}{d}}\right)}{\frac{4s}{d}+2} \|u_a\|^{\frac{4s}{d}+2}_{L^{\frac{4s}{d}+2}}
	\]
	imply that
	\begin{align*}
	\frac{\left(\frac{\rho}{a}\right)^{\frac{2s}{d}} \beta_a}{\frac{4s}{d}+2} \lambda^2 \|u_a\|^{\frac{4s}{d}+2}_{L^{\frac{4s}{d}+2}} = E(\lambda u_a) - \lambda^2 E(u_a) \geq I(b) - \lambda^2 I(a).
	\end{align*}
	In particular, 
	\begin{align*}
	\frac{\rho^{\frac{2s}{d}} \beta_a}{\frac{4s}{d}+2}  \frac{\|u_a\|^{\frac{4s}{d}+2}_{L^{\frac{4s}{d}+2}}}{a^{\frac{2s}{d}+1}} &\geq  \frac{I(b)}{b} - \frac{I(a)}{a} \\
	&\geq C_1 \beta_b^{\frac{p}{2s+p}} - C_2 \beta_a^{\frac{p}{2s+p}} \\
	&\geq \beta_a^{\frac{p}{2s+p}} \left( C_1 \left(\frac{\beta_b}{\beta_a}\right)^{\frac{p}{2s+p}} - C_2 \right).
	\end{align*}
	By the choice of $b$, we see that
	\[
	\beta_b = 1-\left(\frac{b}{a^*}\right)^{\frac{2s}{d}} = \beta_a \left(1+\left(\frac{\rho}{a^*}\right)^{\frac{2s}{d}} \right).
	\]
	This implies that
	\[
	\frac{\rho^{\frac{2s}{d}}}{\frac{4s}{d}+2} \frac{\|u_a\|^{\frac{4s}{d}+2}_{L^{\frac{4s}{d}+2}}}{a^{\frac{2s}{d}+1}} \geq \beta_a^{-\frac{2s}{2s+p}} \left[ C_1 \left(1+\left(\frac{\rho}{a^*}\right)^{\frac{2s}{d}}\right)^{\frac{p}{2s+p}} - C_2 \right].
	\]
	By taking $\rho>0$ sufficiently large, the right hand side is bounded from below by $C \beta_a^{-\frac{2s}{2s+p}}$ for some constant $C$ independent of $a$. This proves the second estimate in \eqref{nonlinear-est-cri}.
\end{proof}

\noindent {\bf Proof of Theorem $\ref{theo-blowup-refined}$.}
Set 
\[
w_a(x):= \beta_a^{\frac{d}{2(2s+p)}} u_a\left( \beta_a^{\frac{1}{2s+p}} x \right).
\]
It follows that $\|w_a\|^2_{L^2} = \|u_a\|^2_{L^2} = a \nearrow a^*$ and by \eqref{nonlinear-est-cri},
\[
\|(-\Delta)^{\frac{s}{2}} w_a\|^2_{L^2} = \beta_a^{\frac{2s}{2s+p}} \|(-\Delta)^{\frac{s}{2}} u_a\|^2_{L^2} \leq C_3 a, \quad \|w_a\|^{\frac{4s}{d}+2}_{L^{\frac{4s}{d}+2}} = \beta_a ^{\frac{2s}{2s+p}} \|u_a\|^{\frac{4s}{d}+2}_{L^{\frac{4s}{d}+2}} \geq C_4 a^{\frac{2s}{d}+1}
\]
as $a \nearrow a^*$. Moreover, by \eqref{energy-est-cri},
\[
0\leq \frac{1}{a} \beta_a^{\frac{2s}{2s+p}} \left(\frac{1}{2} \|(-\Delta)^{\frac{s}{2}} u_a\|^2_{L^2} - \frac{d}{2(d+2s)} \|u_a\|^{\frac{4s}{d}+2}_{L^{\frac{4s}{d}+2}} \right) \leq \beta_a ^{\frac{2s}{2s+p}} \frac{I(a)}{a} \rightarrow 0
\]
as $a \nearrow a^*$. In particular,
\[
\lim_{a \nearrow a^*} \|(-\Delta)^{\frac{s}{2}} w_a\|^2_{L^2} - \frac{d}{d+2s} \|w_a\|^{\frac{4s}{d}+2}_{L^{\frac{4s}{d}+2}} = 0.
\]
Applying Lemma $\ref{lem-compact-mini-fGN}$ to $(w_a)_{a \nearrow a^*}$, there exist a subsequence still denoted by $(w_a)_{a \nearrow a^*}$ and a sequence $(y_a)_{a \nearrow a^*} \subset \R^d$ such that 
\begin{align} \label{convergence-wa}
w_a(\cdot+y_a) \rightarrow \lambda_0^{\frac{d}{2}} Q(\lambda_0 \cdot) \text{ strongly in } H^s(\R^d)
\end{align}
for some $\lambda_0>0$ as $a \nearrow a^*$. We next write $\beta^{\frac{1}{2s+p}} y_a = x_a + z_a$ with $x_a \in [0,1]^d$ and $z_a \in \Z^d$. By the same argument as in Claim $\ref{claim}$, $x_a \rightarrow x_0$, where $x_0$ is as in (V3). Moreover, $\beta_a^{-\frac{1}{2s+p}}(x_a-x_0)$ is uniformly bounded as $a \nearrow a^*$. Passing to a subsequence if necessary, we assume that $\beta_a^{-\frac{1}{2s+p}}(x_a-x_0) \rightarrow x^0 \in \R^d$. By the same argument as in \eqref{fatou-app}, we have that
\begin{align}
\lim_{a \nearrow a^*} \beta_a^{-\frac{p}{2s+p}} \int V(x) |u_a(x)|^2 dx &= \lim_{a \nearrow a^*} \beta_a^{-\frac{p}{2s+p}} \int V\left(\beta_a^{\frac{1}{2s+p}} x+ x_a\right) |w_a(x+y_a)|^2 dx \nonumber \\
&\geq \int \lim_{a\nearrow a^*} \beta_a^{-\frac{p}{2s+p}} V\left(\beta_a^{\frac{1}{2s+p}} x+ x_a\right) |w_a(x+y_a)|^2 dx \nonumber \\
&= \kappa \int \lim_{a \nearrow a^*} \left|x + \beta_a^{-\frac{1}{2s+p}}(x_a-x_0) \right|^p \lambda_0^d [Q(\lambda_0 x)]^2 dx \nonumber \\
&= \kappa \int |x+x^0|^p \lambda_0^d [Q(\lambda_0 x)]^2 dx \nonumber \\
&= \kappa \lambda_0^{-p} \int |x+ x^0 \lambda_0|^p [Q(x)]^2 dx \nonumber \\
&\geq \kappa \lambda_0^{-p} \int |x|^p [Q(x)]^2 dx, \label{est-x^0}
\end{align}
where we use the fact $Q$ is radially symmetric and decreasing to get the last inequality. Note that the equality holds if and only if $x^0 \equiv 0$. We infer from the above estimate, \eqref{convergence-wa} and the fractional Gagliardo-Nirenberg inequality that
\begin{align*}
\beta_a^{-\frac{p}{2s+p}} \frac{E(u_a)}{a} &= \frac{1}{2a} \beta_a^{-\frac{p}{2s+p}} \left(\|(-\Delta)^{\frac{s}{2}} u_a\|^2_{L^2} - \frac{d}{d+2s} \|u_a\|^{\frac{4s}{d}+2}_{L^{\frac{4s}{d}+2}} \right) +  \frac{1}{2a} \beta_a^{-\frac{p}{2s+p}} \int V(x) |u_a(x)|^2 dx \\
&= \frac{1}{2a} \beta_a^{-1}\left(\|(-\Delta)^{\frac{s}{2}} w_a\|^2_{L^2}-\frac{d}{d+2s} \|w_a\|^{\frac{4s}{d}+2}_{L^{\frac{4s}{d}+2}} \right) + \frac{1}{2a} \beta_a^{-\frac{p}{2s+p}} \int V(x) |u_a(x)|^2 dx \\
&\geq \frac{1}{2a} \beta_a^{-1} \left(1-\left(\frac{a}{a^*}\right)^{\frac{2s}{d}} \right) \|(-\Delta)^{\frac{s}{2}} w_a\|^2_{L^2} + \frac{1}{2a}\beta_a^{-\frac{p}{2s+p}} \int V(x) |u_a(x)|^2 dx \\
&=\frac{1}{2a^*} \left(\lambda_0^{2s} \|(-\Delta)^{\frac{s}{2}} Q\|^2_{L^2} + \kappa \lambda_0^{-p} \int |x|^p [Q(x)]^2 dx \right) + o_{a \nearrow a^*}(1) \\
&= \frac{\lambda_0^{2s} d}{4s} + \frac{1}{2} \kappa \lambda_0^{-p} \int |x|^p [Q_0(x)]^2 dx + o_{a \nearrow a^*}(1),
\end{align*}
where we have used that $Q_0 = \frac{Q}{\|Q\|_{L^2}}$, $\|(-\Delta)^{\frac{s}{2}} Q_0\|^2_{L^2}=\frac{d}{2s}$. It follows that
\begin{align} \label{liminf-Ia}
\liminf_{a \nearrow a^*} \beta_a^{-\frac{p}{2s+p}} \frac{I(a)}{a} \geq \frac{\lambda_0^{2s} d}{4s} + \frac{1}{2} \kappa \lambda_0^{-p} \int |x|^p [Q_0(x)]^2 dx.
\end{align}
We next show that the limit in \eqref{liminf-Ia} exists, and is equal to the right hand side. To see this, we take
\[
u_\tau(x) = \sqrt{a} \tau^{\frac{d}{2}} Q_0(\tau(x-x_0)),
\]
where $x_0$ is given in (V3). We see that
\begin{align*}
\frac{E(u_\tau)}{a} &= \frac{1}{2} \left[ \tau^{2s} \left(\|(-\Delta)^{\frac{s}{2}} Q_0\|^2_{L^2} - \frac{d}{d+2s} a^{\frac{2s}{d}} \|Q_0\|^{\frac{4s}{d}+2}_{L^{\frac{4s}{d}+2}} \right) + \int V(\tau^{-1}x+x_0) [Q_0(x)]^2 dx \right] \\
&=\frac{1}{2} \left[ \tau^{2s} \left(1-\left(\frac{a}{a^*}\right)^{\frac{2s}{d}} \right) \|(-\Delta)^{\frac{s}{2}} Q_0\|^2_{L^2} +  \int V(\tau^{-1}x+x_0) [Q_0(x)]^2 dx \right] \\
&=\tau^{2s} \beta_a \frac{d}{4s} + \frac{\kappa}{2} \tau^{-p} \int |x|^p [Q_0(x)]^2 dx + \frac{1}{2} \int (V(\tau^{-1}x+x_0)- \kappa \tau^{-p} |x|^p) [Q_0(x)]^2 dx.
\end{align*}
Taking $\tau= \lambda \beta_a^{-\frac{1}{2s+p}}$ for $\lambda>0$, we get that
\begin{align*}
\beta_a^{-\frac{p}{2s+p}} \frac{E(u_\tau)}{a} = \frac{\lambda^{2s} d}{4s} &+ \frac{\kappa}{2} \lambda^{-p} \int |x|^p [Q_0(x)]^2 dx \\
&+ \frac{1}{2} \int \left[ \beta_a^{-\frac{p}{2s+p}} V\left(\beta_a^{\frac{1}{2s+p}} \lambda^{-1} x+ x_0 \right) - \kappa \lambda^{-p} |x|^p\right] [Q_0(x)]^2 dx.
\end{align*}
Using (V3) and the fact $Q_0 = O(|x|^{-d-2s})$ as $|x| \rightarrow \infty$, it is easy to see that the integral term tends to zero as $a \nearrow a^*$ due to the dominated convergence theorem. It yields that for any $\lambda>0$,
\[
\beta_a^{-\frac{p}{2s+p}}\frac{I(a)}{a} \leq \frac{\lambda^{2s} d}{4s} + \frac{\kappa}{2} \lambda^{-p} \int |x|^p [Q_0(x)]^2 dx + o_{a \nearrow a^*}(1).
\]
This implies that
\[
\limsup_{a \nearrow a^*} \beta_a^{-\frac{p}{2s+p}}\frac{I(a)}{a} \leq \inf_{\lambda>0} \left(\frac{\lambda^{2s} d}{4s} + \frac{\kappa}{2} \lambda^{-p} \int |x|^p [Q_0(x)]^2 dx \right).
\]
This together with \eqref{liminf-Ia} show that
\[
\lim_{a \nearrow a^*} \beta_a^{-\frac{p}{2s+p}}\frac{I(a)}{a} =\frac{\lambda_0^{2s} d}{4s} + \frac{\kappa}{2} \lambda_0^{-p} \int |x|^p [Q_0(x)]^2 dx = \inf_{\lambda>0} \left(\frac{\lambda^{2s} d}{4s} + \frac{\kappa}{2} \lambda^{-p} \int |x|^p [Q_0(x)]^2 dx \right)
\]
and also $x^0 \equiv 0$ because of the equality in \eqref{est-x^0}. Therefore, we obtain
\[
\lambda_0 = \left( \frac{\kappa p}{d} \int |x|^p [Q_0(x)]^2 dx \right)^{\frac{1}{2s+p}}.
\]
The proof is complete.
\hfill $\Box$	

\section*{Acknowledgments}
This work was supported in part by the Labex CEMPI (ANR-11-LABX-0007-01). The author would like to express his deep gratitude to his wife - Uyen Cong for her encouragement and support. He also would like to thank the reviewer for his/her helpful comments and suggestions.


\end{document}